%% file: main.tex
\documentclass[10pt]{tsoart}

\begin{document}

\title[Higher smooth structure sets of $\CC P^n$]{Higher smooth surgery structure sets \\ of complex projective spaces, part I}

\author{Samuel Kalu\v zn\'y}
\author{Tibor Macko}

\subjclass[2020]{Primary: 57R65, 57S25} 

\keywords{complex projective space, higher smooth surgery structure set, $J$-homomorphism, surgery} 

\address{Faculty of Mathematics, Physics, and Informatics, Comenius University, Mlynsk\'a dolina,
SK-824 48, Bratislava, Slovakia} \email{tibor.macko@fmph.uniba.sk} \email{samuel.kaluzny@fmph.uniba.sk} 

\address{Institute of Mathematics, Slovak Academy of Sciences, \v Stef\'anikova 49, Bratislava, SK-81473, Slovakia} \email{macko@mat.savba.sk}

\thanks{This work was supported by grants VEGA 1/0425/25, UK/1192/2025, UK/3116/2024 and UK/197/2023. Parts of this work will be included in the PhD thesis of S.K} 

\date{}

\begin{abstract}
      This is the first of the two articles where we determine the higher smooth surgery structure sets of complex projective spaces (up to some extension problems) and the forgetful map to their topological versions in low dimensions. In this part, we concentrate on the free subgroup, where we obtain information in all dimensions. In the second part, we study the torsion.
\end{abstract}

\maketitle

\section{Introduction}
Let  $\CAT = \DIFF, \PL$ or $\TOP$. Given a closed $\CAT$-manifold $X$, the $\CAT$-surgery structure set $\sS^{\CAT} (X)$ is a certain set, recalled in detail in Definition~\ref{def:cat-structure-set}, where closed $\CAT$-manifolds homotopy equivalent to $X$ are organized. Varying $\CAT$, one obtains various forgetful maps between these structure sets. The principal aim of surgery theory is to determine these structure sets as well as the corresponding forgetful maps for a given $X$; thus obtaining important information about the $\CAT$-classification of manifolds in the homotopy type of $X$. We concentrate on the forgetful map $F_X \co \sS^{\DIFF} (X) \ra \sS^{\TOP} (X)$ as the difference between $\TOP$ and $\PL$ is small. A lot of calculations for specific $X$ have been done, but they are mostly confined to the case $\CAT = \TOP$, with only a handful of them when $\CAT= \DIFF$, see e.g. \cite[Ch.~18]{Lueck-Macko(2024)}, \cite{Chang-Weinberger(2021)}, \cite{Wall(1999)}. The situation is further complicated by the fact that while $\sS^{\TOP} (X)$ has a natural abelian group structure, there is no such natural structure on $\sS^{\DIFF} (X)$, see \cite[Sec. 11.8]{Lueck-Macko(2024)}, and hence $F_X$ is not a homomorphism in general. 

If $X$ is a compact $\CAT$-manifold with boundary, there are analogous structure sets $\sS^{\CAT}_{\partial} (X)$. If $X = Y \times \text{D}^k$ with $k \geq 1$, the structure sets $\sS^{\CAT}_{\partial} (Y \times \text{D}^k)$  become groups (abelian if $k \geq 2$) for any $\CAT$, and $F_X$ becomes a homomorphism. Hence, there is some hope that calculations might become more tractable in this case. Also, these groups provide information about (block) automorphism spaces of $Y$; we have $\pi_k (\G(Y)/\widetilde{\CAT} (Y)) \cong \sS_{\partial}^{\CAT} (Y \times \text{D}^k)$ for $k > 0$, see \cite[1.1.4]{Weiss-Williams(2001)}. In Remark~\ref{rem:surgery-motivation}, we elaborate on further motivation. 

The aim of this article (and its follow-up \cite{Kaluzny-Macko-2026-part-II}) is to present some calculations of $\sS^{\DIFF}_{\partial} (Y \times \text{D}^k)$ and $F_{Y \times \text{D}^k}$ when $Y = \CC P^n$. The results are complete (up to extension problems) for low values of $n$ and $k$. Moreover, we provide formulas and algorithms that, in principle, yield complete descriptions of the free part of the structure sets for all values of $n$ and $k$. Note that the calculation of $\sS^{TOP}_{\partial} (\cpd)$ is well-known through standard surgery methods, \cite[14C]{Wall(1999)}. Other previous work on this topic was done when $k=0$ by Brumfiel (see~\cite{Brumfiel1971}, and the unpublished~\cite{Brumfiel-(1970))}), who calculated (up to extension problems) the structure sets $\sS^{\DIFF} (\CC P^n)$ for $n \leq 6$, and by Little, see \cite{Little1989}, who, using Brumfiel's results, described the restriction of the forgetful map to the free part $F_{\CC P^n} \colon \sS^{\DIFF} (\CC P^n)_{\text{free}} \rightarrow \sS^{\TOP} (\CC P^n)_{\text{free}}$ when $n \leq 6$. Further related work has recently been done in~\cite{kasilingam2026diffeomorphismclassificationsmoothstructures} and~\cite{Basu-Kasilingam-Sarkar(2025)}, where smooth structures on $\cp^n$ and $\CC P^n \times S^k$ for low values of $n,k$ were studied. We comment on this relation in more detail in Remark~\ref{rem:surgery-motivation}.

We split our work into two parts, which correspond to different methods that are used. Roughly speaking, in this first part, we concentrate on the free part of $\textstyle\sS_{\partial}^{DIFF}(\cpd)$, which is mostly confined to studying what happens in dimensions divisible by $4$. The second part is devoted mainly to the torsion part of $\textstyle\sS_{\partial}^{DIFF}(\cpd)$, where the behavior in dimension $2$ modulo $4$ is prominent. More details on this separation are provided in Remark~\ref{rem:methods-in-two-parts}.


\section{The Main Theorems} \label{sec:main-theorems}


There are several versions of the surgery $\CAT$-structure sets; see~\cite[Ch. 11]{Lueck-Macko(2024)}. Since we will work only with simply-connected compact manifolds with boundary, we use the following one:

\begin{defn} \label{def:cat-structure-set}
    The \emph{relative $\CAT$-structure set} $\sS^{\CAT}_{\partial} (X)$ of a compact $\CAT$-manifold $X$ with boundary $\partial X$ is defined as the pointed set of equivalence classes of homotopy equivalences $(f,\partial f) \colon (M,\partial M)\rightarrow(X,\partial X)$, with $M$ being another compact $\CAT$-manifold with boundary, such that $\partial f$ is a $\CAT$-isomorphism. The equivalence relation is given by a diagram
    \begin{equation*}
        \begin{tikzcd}
            (M_1,\partial M_1) \arrow{d}[']{(g,\partial g)}{\cong} \arrow{r}{(f_1,\partial f_1)} & (X, \partial X) \\
            (M_2,\partial M_2) \arrow{ru}[']{(f_2,\partial f_2)} & 
        \end{tikzcd}
    \end{equation*}
    where $g \colon M_1 \rightarrow M_2$ is a $\CAT$-isomorphism, and which commutes up to homotopy rel $\partial M_1$. The basepoint is represented by $\id \colon X \rightarrow X$. 
    
    If $\partial X = \emptyset$, we write $\sS^{\CAT} (X)$. If $\partial X \neq \emptyset$, the \emph{absolute $\CAT$-structure set} $\sS^{\CAT} (X)$ is defined by dropping the condition that $\partial f$ is a $\CAT$-isomorphism and only requiring that $(f, \partial f)$ is a homotopy equivalence of pairs.
\end{defn}
(Note that, in general, the equivalence relation is given by $h$-cobordism. However, in the simply-connected situation all homotopy equivalences are simple, therefore all $h$-cobordisms are $s$-cobordisms, and in dimensions $\geq5$ we have that $s$-cobordism implies $\CAT$-isomorphism.)

When $X = Y \times \text{D}^k$, the pointed sets $\sS^{\CAT}_{\partial} (Y \times \text{D}^k)$ become groups (abelian if $k \geq 2$), where the group operation is given by ``stacking''; see~\cite[Sec. 11.8]{Lueck-Macko(2024)}. 
Our main results in this paper are the following three theorems and their corollary.

\begin{thm}
\label{mainthm1}
    When $2n+k\geq5$ there are isomorphisms 
    \begin{align*}
        \textstyle\sS_{\partial}^{DIFF}(\cpd)\cong\Z^{t_{n,k}}\oplus T_{n,k} 
    \end{align*}
    where
    \begin{align*}
        t_{n,k}=
        \begin{cases}
            0 & \text{if $k$ is odd}\\
            \lfloor\frac{n}{2}\rfloor+1 & \text{if $k\equiv0\mod4$ and $n$ is odd} \\
            \lfloor\frac{n}{2}\rfloor & \text{otherwise}
        \end{cases}
    \end{align*}
    and $T_{n,k}$ is a finite abelian group (the torsion subgroup). 
\end{thm}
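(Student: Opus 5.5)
We compute the rank of $\sS_{\partial}^{DIFF}(\cpd)$ from the smooth surgery exact sequence of the simply-connected manifold $\cpd$ of dimension $m=2n+k\geq5$:
\begin{equation*}
L_{m+1}(1)\ra \sS_{\partial}^{DIFF}(\cpd)\ra [\Sigma^k\CC P^n_+, G/O]\ra L_{m}(1).
\end{equation*}
Here the normal invariants of $\cpd$ rel boundary are $[\cpd/\partial, G/O]_*=[\Sigma^k(\CC P^n_+),G/O]$. For $k\geq1$ this is an exact sequence of groups. All the groups involved are finitely generated. Hence $\sS_{\partial}^{DIFF}(\cpd)$ is a finitely generated abelian group; for $k=1$ abelianness follows because the structure group embeds compatibly into the abelian topological structure group modulo finite kernel, or one can simply replace it by its abelianization for the rank count. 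Its rank is determined by rationalizing the sequence. The same argument handles $k=0$, where we use the Brumfiel/Wall group structure on the structure set or just count rationally.

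The first step is rationalization. Since $G/O\simeq_{\QQ} BO$ and $\pi_i(G)$ is finite for $i>0$, we get $[\Sigma^k\CC P^n_+,G/O]\otimes\QQ\cong \widetilde{KO}^{0}(\Sigma^k\CC P^n_+)\otimes\QQ\cong \bigoplus_{j\geq1}H^{4j-k}(\CC P^n_+;\QQ)$. We then count the classes of degree $4j-k$ that lie in $\{0,2,\dots,2n\}$ with $j\geq1$. If $k$ is odd, all these degrees are odd, so the rank is $0$. If $k\equiv 2\pmod 4$, we need $4j-k\equiv2\pmod4$ with $0\le 4j-k\le 2n$; this gives degrees $2,6,10,\dots\le 2n$, and there are $\lceil n/2\rceil$ of them. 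If $k\equiv0\pmod 4$, we need degrees $0,4,\dots\le 2n$ with $4j\ge k$. For $k\ge4$ the degree $0$ class is allowed, giving $\lfloor n/2\rfloor+1$ classes. For $k=0$ degree $0$ is excluded, giving $\lfloor n/2\rfloor$ classes.

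The second step is to determine the rational image of the surgery obstruction maps. The map $[\Sigma^k\CC P^n_+,G/O]\ra L_m(1)\otimes\QQ$ is nonzero only when $m\equiv0\pmod4$. In that case it is given by the signature formula, i.e.\ the top-degree component of $\mathcal{L}(\CC P^n)\cdot(\mathcal L(\xi)-1)$. This component involves the degree-$(m-k)$ class, namely the degree-$2n$ class, with coefficient $\pm1$ times a rational unit, so the map is rationally surjective and lowers the rank by one. This case occurs exactly when $2n+k\equiv0\pmod4$. The left map $L_{m+1}(1)\otimes\QQ\ra\sS\otimes\QQ$ has image equal to the cokernel of $[\Sigma^{k+1}\CC P^n_+,G/O]\otimes\QQ\ra L_{m+1}(1)\otimes\QQ$. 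When $m+1\equiv0\pmod4$ this map is surjective by the same argument, so it contributes nothing. Therefore $\operatorname{rank}\sS=\operatorname{rank}\,$(normal invariants)$\;-\;[\,m\equiv0 \bmod 4\,]$.

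The third step reconciles the counts with $t_{n,k}$. For $k\equiv2\pmod 4$, $m\equiv0\pmod4$ holds exactly when $n$ is odd, and then $\lceil n/2\rceil-1=\lfloor n/2\rfloor$; when $n$ is even, $\lceil n/2\rceil=\lfloor n/2\rfloor$. For $k\equiv0\pmod4$, $k\geq4$, the correction applies exactly when $n$ is even: this gives $\lfloor n/2\rfloor$ for $n$ even and $\lfloor n/2\rfloor+1$ for $n$ odd. For $k=0$ and $n$ even, we get $\lfloor n/2\rfloor-1$. This disagrees with $t_{n,0}$, and it is the step to be careful about. I expect the main obstacle is handling $k=0$ and the degree-$0$ and top classes correctly: whether $\tilde{}$ versus unreduced cohomology and the rel-boundary convention shift the count. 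I would recheck by comparing with the known answer $\sS^{TOP}(\CC P^n)$ from Wall 14C, whose rank differs from the smooth one only by finite groups since $\TOP/O$ is rationally trivial. This comparison reduces everything to the well-known topological calculation, and it also yields finiteness of $T_{n,k}$.
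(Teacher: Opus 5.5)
\paragraph{Comparison with the paper's proof.}
Your argument is correct for $k\geq 1$, which is the intended scope of the theorem (the paper assumes $k>0$ throughout), but it takes a different route from the paper.

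The paper works integrally:
\begin{itemize}
\item It uses Fujii's computation of $\widetilde{KO}^*(\cp^n)$ and Adams' work on the $J$-homomorphism to prove $\ker[\Sigma^k\cp^n,J]\cong\Z^{t'_{n,k}}$. This includes a $2$-primary argument via the Adams conjecture showing that the order-two classes in $\widetilde{KO}^0(\Sigma^k\cp^n)$ do not lie in the kernel.
\item It splits $[\Sigma^k\cp^n,G/O]$ as this free group plus the finite group $\coker[\Sigma^k\cp^n,\Omega J]$, and adds the summand $\pi_k(G/O)$.
\item By gluing to $\cp^n\times S^{2l}$ and applying Hirzebruch's formula, it shows that $\sigma^{DIFF}_{n,k}$ is an explicit nonzero linear form on the free part.
\end{itemize}

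You replace all of this by finite generation plus rationalization. Since $G/O\to BO$ is a rational equivalence, the rank of the normal invariants is a count of degrees $4j$ in $\widetilde{H}^*(\Sigma^k\cp^n_+;\Q)$. Rational surjectivity of the signature obstruction follows because the coefficient of the top Pontryagin class in the $\sL$-class is a nonzero Bernoulli-number multiple, and that top class is realised rationally.

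Theorem~\ref{mainthm1} asserts only the rank and the finiteness of the torsion, so this suffices and is considerably shorter. In particular, you never need to decide whether the order-two classes lie in the kernel of $J$. Your closing remark also works: $\sS^{DIFF}_\partial\to\sS^{TOP}_\partial$ is a rational isomorphism for $k\geq 2$, since $TOP/O$ has finite homotopy groups.

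What the paper's route buys is the actual lattice. It produces explicit generators of the free part of the normal invariants and the explicit coefficients $a^i_{n,l}$, $b^i_{n,l}$ of the obstruction. These are exactly what Theorems~\ref{mainthm2}, \ref{mainthm3} and Corollary~\ref{maincoroll} require, and no rational argument can detect them.

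\paragraph{Two side remarks that need correcting.}
\begin{itemize}
\item \emph{The case $k=0$.} The discrepancy you found is genuine, not a reduced-versus-unreduced bookkeeping error. Your rational count gives $\lfloor (n-1)/2\rfloor$ for \emph{both} parities of $n$, so the formula for $t_{n,0}$ fails for odd $n$ as well. This count matches Wall's $\sS^{TOP}(\cp^n)\cong\bigoplus_{i=1}^{n-1}L_{2i}(\Z)$. The theorem is simply not meant for $k=0$. Moreover, $\sS^{DIFF}(\cp^n)$ carries no natural group structure there, and $\sigma^{DIFF}_{n,0}$ is not even a homomorphism. So your claim that ``the same argument handles $k=0$'' should be dropped rather than repaired.
\item \emph{Abelianness for $k=1$.} Your argument is vacuous, because $\sS^{TOP}_\partial(\cp^n\times\text{D}^1)=0$. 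What you actually obtain is that the group is finite. That is also all the paper's argument establishes in that case.
\end{itemize}
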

It is a standard result (see \cite[14C]{Wall(1999)} for the case $l=0$, the general case follows analogously) that there are isomorphisms 
\begin{align} \label{eqn:top-str-set-of-cp-n-times-d-k}
    \textstyle\sS_{\partial}^{TOP}(\cpdt)\xrightarrow{\cong}
        \Z^{t_{n,2l}}\oplus\Z_2^{n-t_{n,2l}} \quad \textup{and} \quad \sS_{\partial}^{TOP} (\cp^n\times \text{D}^{2l+1})\xrightarrow{\cong} 0 
\end{align}
given by the so-called splitting invariants $\overline{\sigma}_{m,2l}$, which are surgery obstructions along the standard embedding of $\cp^m\times \text{D}^{2l}$ for $0\leq m< n$. 

The isomorphisms in Theorem~\ref{mainthm1} involve some choices: the choice of a specific splitting in Lemma~\ref{suspensg/o}, of generators of the solutions of systems \eqref{eqn:kerJ-matrix1}, \eqref{eqn:kerJ-matrix2} and \eqref{eqn:kerJ-matrix3}, and of some generators of the subgroup of roots of \eqref{eqn:obstr-final-formula-even-l}. The following two theorems depend on the latter two choices; the specific splitting is irrelevant. Suppose that we have made the required choices. We obtain a diagram
\begin{equation}
\label{eqn:diagram-forgetful-map}
    \begin{tikzcd}[column sep=2cm]
        \sS_{\partial}^{DIFF}(\cpdt) \arrow{d}{F_{\cpdt}} & \Z^{t_{n,2l}}\oplus T_{n,2l} \arrow{l}{\cong} \arrow[dashed]{d}{F_{n,l}} \\
        \sS_{\partial}^{TOP}(\cpdt) \arrow{r}{\cong} & \Z^{t_{n,2l}}\oplus\Z_2^{n-t_{n,2l}} 
    \end{tikzcd}
\end{equation}
where $F_{\cpdt}$ is the forgetful map and $F_{n,l}$ is given by a $(2\times2)$-block matrix
\begin{equation}
\label{eqn:matrix-forgetful-map}
    \begin{pmatrix}
        A_{n,l} & 0 \\
        B_{n,l} & C_{n,l}
    \end{pmatrix}.
\end{equation}
Here $A_{n,l}:\Z^{t_{n,2l}}\to\Z^{t_{n,2l}}$, $B_{n,l}:\Z^{t_{n,2l}}\to\Z_2^{n-t_{n,2l}}$ and $C_{n,l}:T_{n,2l}\to\Z_2^{n-t_{n,2l}}$. In the following two theorems, we determine the matrix $A_{n,l}$ (for odd $n+l$ up to an indeterminacy of index $2$). In general, it is given by a product $A_{n,l}^{'}\cdot P_{n,l}$. Suppose that the generators of solutions of \eqref{eqn:kerJ-matrix1}, \eqref{eqn:kerJ-matrix2} and \eqref{eqn:kerJ-matrix3} were chosen as in Remark~\ref{kerJgenerstandembed}. Then:
\begin{thm}
\label{mainthm2}
    The matrix $A_{n,l}^{'}$ has the following entries:
    \begin{align*}
        A_{n,l}^{'}=
        \begin{cases}
            (a_{2m,l}^i)_{0\leq m,i\leq t_{n,l}-1} & \text{if $l$ is even} \\
            (b_{2m+1,l}^{i+1})_{0\leq m,i\leq t_{n,l}-1} & \text{if $l$ is odd}
        \end{cases}
    \end{align*}
    with an additional column of zeros when $n+l$ is even, where the coefficients $a_{m,l}^i,b_{m,l}^i$ are zero for $m<i$ (the matrix is lower triangular) and are otherwise given by \eqref{eqn:coeff-a_n,l}, \eqref{eqn:coeff-a_n,l-sphere} and \eqref{eqn:coeff-b_n,l}.
\end{thm}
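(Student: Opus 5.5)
We compute $A_{n,l}$ by comparing the smooth and topological surgery exact sequences for $\cpdt$ through the forgetful map. Both structure sets embed, modulo torsion, into the normal invariants: $\sS^{DIFF}_{\partial}(\cpdt)\to[\Sigma^{2l}\cp^n_+,G/O]$ and $\sS^{TOP}_{\partial}(\cpdt)\to[\Sigma^{2l}\cp^n_+,G/TOP]$. The forgetful map is then induced by $G/O\to G/TOP$.

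Rationally, $[\Sigma^{2l}\cp^n_+,G/O]\otimes\Q\cong[\Sigma^{2l}\cp^n_+,BO]\otimes\Q$, since $G$ is rationally trivial. Via the splitting of Lemma~\ref{suspensg/o}, the free part of the smooth normal invariants is identified with the kernel of the $J$-homomorphism on $\widetilde{KO}(\Sigma^{2l}\cp^n)$. Its generators are the chosen solutions of \eqref{eqn:kerJ-matrix1}, \eqref{eqn:kerJ-matrix2} and \eqref{eqn:kerJ-matrix3}, taken as in Remark~\ref{kerJgenerstandembed}, followed by the change-of-basis matrix $P_{n,l}$ that picks out those surviving to the structure set, namely the roots of \eqref{eqn:obstr-final-formula-even-l}.

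So the real content is $A'_{n,l}$: the value of the topological splitting invariant $\overline{\sigma}_{2m,2l}$, or $\overline{\sigma}_{2m+1,2l}$ for odd $l$, on the $i$-th generator of $\ker J$. The key step is to compute these splitting invariants as signatures (simply connected surgery obstructions in dimension $0 \bmod 4$). They are obtained by restricting the normal invariant along the standard embedding $\cp^m\times D^{2l}\subset\cp^n\times D^{2l}$ and applying the Hirzebruch formula. Concretely, for a vector-bundle class $\xi$ on $\Sigma^{2l}\cp^m$ with trivial spherical fibration, the surgery obstruction is
\[
\tfrac18\big\langle \mathcal L(\cp^m)\,(\mathcal L(\xi)^{-1}-1),[\Sigma^{2l}\cp^m]\big\rangle .
\]
This reduces to evaluating a Chern character and $\mathcal L$-class on $\Sigma^{2l}\cp^m$. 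For generators that are restrictions of complexified or realified powers of the Hopf bundle, suitably suspended, the evaluation gives explicit binomial-type sums, which are the coefficients \eqref{eqn:coeff-a_n,l}, \eqref{eqn:coeff-a_n,l-sphere} and \eqref{eqn:coeff-b_n,l}. The parity of $l$ decides whether the relevant dimensions $2m+2l$ divisible by $4$ come from even $m$ or from odd $m$, which explains the two cases and the index shift $i+1$.

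Lower triangularity should follow from naturality. The $i$-th generator is chosen, per Remark~\ref{kerJgenerstandembed}, to be supported on the skeleton of dimension at least $2i+2l$ (it restricts trivially to $\Sigma^{2l}\cp^{m}$ for $m<i$), so $\overline{\sigma}_{m}$ vanishes on it. When $n+l$ is even, $t_{n,2l}$ has one more summand coming from the top cell, and the $\rho$-type invariant gives an extra free generator that is not detected by the $\ker J$ part. Its column in $A'_{n,l}$ is therefore zero, and it is carried instead by $P_{n,l}$.

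The main obstacle will be checking that the chosen $\ker J$ generators are honestly detected by these characteristic class formulas with the correct normalization. This involves the passage from $KO$ to rational cohomology, the factor $\tfrac18$ and the precise $\mathcal L$-genus contribution of the suspension coordinates, and any discrepancy by powers of $2$ would propagate into the entries. I would first verify the formula on $\cp^1$ and $\cp^2$ against Wall's known description in \cite[14C]{Wall(1999)}, and then argue inductively in $m$.
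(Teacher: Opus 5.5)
Your overall route is the paper's. The forgetful map on normal invariants commutes with restriction along $\cp^m\times D^{2l}\subset\cpdt$. So each integral splitting invariant $\overline{\sigma}_{m,2l}$ of a smooth normal invariant equals $\sigma^{DIFF}_{m,2l}$ of its restriction. The generator choice of Remark~\ref{kerJgenerstandembed} then gives a lower triangular matrix, and $A_{n,l}=A'_{n,l}P_{n,l}$.

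However, your justification of the additional zero column for $n+l$ even is wrong. There is no $\rho$-type invariant here, and there is no free generator outside $\ker[\Sigma^{2l}\cp^n,J]\oplus\ker\pi_{2l}(J)$. The count also runs the other way. By Remark~\ref{com:relation-exponents}, when $n+l$ is even it is the normal invariants that have free rank $t_{n,2l}+1$. Only $t_{n,2l}$ integral splitting invariants $\overline{\sigma}_{m,2l}$ with $m<n$ appear as rows, because the top one, $\overline{\sigma}_{n,2l}=\sigma^{DIFF}_{n,2l}$, vanishes on the structure set.

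The extra column belongs to the top generator $\xi_{t'_{n,2l}}$ of $\ker[\Sigma^{2l}\cp^n,J]$. This is an honest $\ker J$ element, new at level $n$ since $t'_{n-1,2l}<t'_{n,2l}$ in this case. It is chosen to map to $0$ under $[\Sigma^{2l}i_{n-1,n},BO]$, so by naturality every $\overline{\sigma}_{m,2l}$ with $m<n$ vanishes on it. This is just your own triangularity argument applied to the last index, and it is how the paper argues.

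Second, vanishing of restrictions only gives the zero pattern. To identify the remaining entries with $a^i_{2m,l}$ and $b^{i+1}_{2m+1,l}$, you need restriction to be exactly the projection on free parts. That is, the $i$-th generator at level $n$ must go to the $i$-th generator at level $2m$ (resp.\ $2m+1$), and $\xi_{S^{2l}}$ must go to $\xi_{S^{2l}}$. The reason is that these coefficients are defined through the coordinates $m_{k,i}$ of the level-$m$ generators. Those generators are integer combinations of the classes $g_{\R}^s\mu_2\mu_0^{k-1}$ (and their analogues), not restrictions of powers of the Hopf bundle as you describe them.

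Remark~\ref{kerJgenerstandembed} guarantees exactly this projection property. Applying Lemma~\ref{lema:obstr-formula-simplified} at level $m$ then gives the entries directly. That lemma also gives your formula over ``$[\Sigma^{2l}\cp^m]$'', which is not a fundamental class, a precise meaning via the gluing of Lemma~\ref{gluinglema}. No normalization check against Wall and no induction in $m$ is needed.
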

\begin{thm}
\label{mainthm3}
    If $n+l$ is even, the columns of $P_{n,l}$ are the chosen generators of the subgroup of roots of the linear polynomial \eqref{eqn:obstr-final-formula-even-l}. Otherwise, $P_{n,l}$ is either the identity matrix or $\coker P_{n,l}\cong\Z_2$.
\end{thm}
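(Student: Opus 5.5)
We work with the smooth surgery exact sequence for $\cpdt$ and compare it with the topological one. The free part of $\sS^{DIFF}_\partial(\cpdt)$ comes, via the normal invariant map, from a subgroup of $[\Sigma^{2l}\cp^n_+, G/O]$. Rationally this is detected in degrees divisible by $4$. Using the splitting of Lemma~\ref{suspensg/o}, the free part of $[\Sigma^{2l}\cp^n, G/O]$ is identified with the kernel of the $J$-homomorphism on the free part of $\widetilde{KO}(\Sigma^{2l}\cp^n)$. The generators of this kernel are the chosen solutions of \eqref{eqn:kerJ-matrix1}--\eqref{eqn:kerJ-matrix3}, normalised as in Remark~\ref{kerJgenerstandembed}.

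The structure set is then the part of this free group that lies in the kernel of the surgery obstruction map to $L_{2n+2l}(\Z)$, modulo torsion. That map is nonzero on the free part exactly when $n+l$ is even, where $L_{2n+2l}(\Z)\cong\Z$ (signature). By the earlier formula \eqref{eqn:obstr-final-formula-even-l}, the obstruction restricted to the free part is a linear polynomial in the coordinates with respect to the chosen generators. So the free part of the structure set is its group of roots, and $P_{n,l}$ is the inclusion of that root subgroup, written in those coordinates. This gives the first assertion: the columns of $P_{n,l}$ are the chosen generators of the root subgroup. By construction, $A_{n,l}=A'_{n,l}\cdot P_{n,l}$, where $A'_{n,l}$ records the forgetful map on normal invariants (the splitting invariants of the images of the $\ker J$ generators, as in Theorem~\ref{mainthm2}).

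When $n+l$ is odd, $L_{2n+2l}(\Z)\cong\Z_2$ (Arf invariant). The obstruction on the free summand is then a homomorphism $\Z^{t}\to\Z_2$, and its kernel has index $1$ or $2$. If we take $P_{n,l}$ to be the inclusion of this kernel with respect to the chosen basis (after an adapted change of basis, diagonal with one entry $2$), then either $P_{n,l}=\mathrm{id}$ or $\coker P_{n,l}\cong\Z_2$.

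One subtlety must be handled. Exactness also involves the action of $L_{2n+2l+1}(\Z)=0$ and the torsion, so the free part of the structure set surjects onto, and injects into, the free part of the kernel of the obstruction map. We would argue this by comparing with the topological sequence, where $\sS^{TOP}$ injects into $[\,\cdot\,,G/TOP]$. In addition, the torsion of $[\Sigma^{2l}\cp^n,G/O]$ could contribute to the Arf invariant. So the obstruction must be computed on genuine lifts of the free generators, not only on rational classes. This is the main obstacle. The value of the Arf invariant on the chosen lifts is precisely the unknown that forces the "either/or" in the odd case. We would show that the obstruction takes values in $\Z_2$ and is a homomorphism on the free summand. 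That shows the index is at most $2$, which is all the theorem claims, without determining which case occurs.
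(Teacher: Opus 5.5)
Your proposal is correct and is essentially the paper's argument. The structure set is $\ker\sigma_{n,2l}^{DIFF}$ by \eqref{SESeven}. For even $n+l$ the obstruction is the linear form \eqref{eqn:obstr-final-formula-even-l}, which is automatically zero on torsion because it is $\Z$-valued, so the columns of $P_{n,l}$ are generators of its roots. For odd $n+l$ the obstruction is a $\Z_2$-valued homomorphism, so the free part of its kernel has index $1$ or $2$.

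Two remarks. The comparison with the topological sequence is not needed, since \eqref{SESeven} already gives injectivity. The possible nonzero Arf invariant on torsion only decides which alternative occurs: in the paper $P_{n,l}$ is the free block of $\eta_{n,2l}^{DIFF}$ relative to chosen generators of $\ker\sigma_{n,2l}^{DIFF}$, so a nonzero value on torsion gives the identity case, not the index-$2$ inclusion of $\ker(\sigma|_{\text{free}})$ you describe.
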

\begin{com}
    In \cite{Kaluzny-Macko-2026-part-II}, the follow-up to this article, we show that for some specific choice of splitting in Lemma~\ref{suspensg/o} $P_{n,l}$ is the identity matrix for all odd $n+l$. The indeterminacy in Theorem~\ref{mainthm3} is thus resolved. 
\end{com}
\begin{remark}
    We calculated the numerical values of entries in $A_{n,l}^{'}$ for $1\leq n,2l\leq6$ and $2(n+l)\geq6$ by applying a series of algorithms (coded in Python) to find the solutions of systems \eqref{eqn:kerJ-matrix1}, \eqref{eqn:kerJ-matrix2} and \eqref{eqn:kerJ-matrix3}, and the values of coefficients $a_{m,l}^i,b_{m,l}^i$ from \eqref{eqn:coeff-a_n,l}, \eqref{eqn:coeff-b_n,l} and \eqref{eqn:coeff-a_n,l-sphere}. These results are summarized in Table~\ref{tabobstrtors}.
\end{remark}

\begin{cor}
\label{maincoroll}
Let $(f,\partial f):(X,\partial X)\to(\cp^6\times D^{2l},\cp^6\times S^{2l-1})$ represent an element of $\sS^{TOP}(\cp^6\times D^{2l})$, $1\leq l\leq3$.
    \begin{enumerate}
        \item The map $(f,\partial f)$ admits a smooth structure on $X$ only if its integral splitting invariants $(\overline{\sigma}_{\epsilon,2l},\overline{\sigma}_{2+\epsilon,2l}\dots,\overline{\sigma}_{4+\epsilon,2l})$ (where $\epsilon=0$ if $l$ is even and $1$ if $l$ is odd) satisfy the congruences given in Table \ref{tabsplitinv}.
        \item If $[(f,\partial f)]\in\sS_{\partial}^{TOP}(\cpdt)$ is divisible by $2$ and its splitting invariants satisfy the congruences in Table~\ref{tabsplitinv}, then $(f,\partial f)$ admits a smooth structure on $X$.  
    \end{enumerate}
    Similar conditions can be obtained in dimensions $1\leq l\leq3$ and $0\leq n\leq5$ from the matrices in Table~\ref{tabobstrtors}, by the process described in the proof of this corollary at the end of Section~\ref{nonsmooth}.  
\end{cor}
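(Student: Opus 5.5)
The plan is to deduce Corollary~\ref{maincoroll} from diagram \eqref{eqn:diagram-forgetful-map} and the block matrix \eqref{eqn:matrix-forgetful-map}. Take $n=6$ and $1\leq l\leq 3$. Then $n+l$ is even exactly when $l$ is even, so Theorem~\ref{mainthm3} applies in that case. For odd $l$ we use the follow-up result quoted in the comment after Theorem~\ref{mainthm3}: $P_{6,l}$ is the identity. Alternatively, we keep the index-$2$ indeterminacy and check that it does not affect the congruences.

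\textbf{Necessity (part 1).} Suppose $(f,\partial f)$ admits a smooth structure, so that $[(f,\partial f)]=F_{\cpdt}(y)$ for some $y\in\sS^{DIFF}_\partial(\cpdt)$. Write $y=(v,\tau)\in\Z^{t_{6,2l}}\oplus T_{6,2l}$. By the block form, the integral part of the image is $A_{6,l}v=A'_{6,l}P_{6,l}v$. The torsion block $C_{6,l}$ and the block $B_{6,l}$ only affect the $\Z_2$-coordinates, so they can be ignored here. The integral coordinates of $\sS^{TOP}$ are the splitting invariants $\overline{\sigma}_{\epsilon,2l},\dots,\overline{\sigma}_{4+\epsilon,2l}$ (in the even-$l$ case there is also a top one, killed by the zero column). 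Hence the vector of these invariants lies in the image lattice $\operatorname{im}(A'_{6,l}P_{6,l})\subseteq\Z^{t}$.

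Theorem~\ref{mainthm2} says this matrix is lower triangular with explicit entries, computed in Table~\ref{tabobstrtors}. The image lattice can therefore be described by congruences by back-substitution. Write $\overline{\sigma}=A w$ and solve for $w$ row by row: the first row forces a divisibility of $\overline{\sigma}_{\epsilon,2l}$ by the diagonal entry $a^0$. Each subsequent row forces $\overline{\sigma}_{2m+\epsilon,2l}-\sum_{i<m}a^i_{2m}w_i\equiv 0$ modulo the diagonal entry, with the $w_i$ expressed through the earlier invariants. Alternatively, compute the Smith/Hermite normal form of $A_{6,l}$. These are the congruences in Table~\ref{tabsplitinv}. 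In the even-$l$ case we first multiply by $P_{6,l}$, whose columns generate the roots of \eqref{eqn:obstr-final-formula-even-l}. This adds one extra linear relation, which also appears among the table's conditions.

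\textbf{Sufficiency (part 2).} Let $x=2x'$ satisfy the congruences. Its $\Z_2$-coordinates vanish, and its integral part $u$ lies in $\operatorname{im}A_{6,l}$, say $u=A_{6,l}v$. Then $F(v,0)=(u,B_{6,l}v)$, and $B_{6,l}v$ may be nonzero. To remove this, I would use divisibility by $2$ more cleverly. The integral part of $x'$ is $u/2$. I would check from the table that $u/2$ also lies in $\operatorname{im}A_{6,l}$; the congruences should be designed so that this holds. With $u/2=A_{6,l}v'$, we get $F(2v',0)=2F(v',0)=(u,0)=x$, since $F$ is a homomorphism here and $2$ kills the $\Z_2$-summand. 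If $u/2\notin\operatorname{im}A$ in general, the fallback is to show that $B_{6,l}$ vanishes on $2\Z^{t}$. This holds since $B$ is a homomorphism to a $2$-torsion group, so $F(2v,0)=(2A v,0)$. In that case the congruences in the table should be phrased for $u\in 2\operatorname{im}A$.

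\textbf{Main obstacle.} The conceptual part is easy; the real work is computational. It lies in extracting the explicit congruences from the triangular matrices, and in the even-$l$ case in combining them with the roots of \eqref{eqn:obstr-final-formula-even-l}. For odd $l$ one must also make sure the index-$2$ ambiguity of $P_{6,l}$ does not change the conditions. I would resolve this by invoking the follow-up result that $P_{6,l}=\mathrm{id}$. The same procedure applied to the other matrices in Table~\ref{tabobstrtors} gives the conditions for $0\leq n\leq 5$.
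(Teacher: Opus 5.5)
Your part (1) follows the paper's proof. The integral invariants of a smoothable element lie in $\im A_{6,l}=\im(A'_{6,l}P_{6,l})\subseteq\im A'_{6,l}$. Back-substitution through the lower triangular $A'_{6,l}$ gives the congruences, and for even $l$ one adds the condition $\overline{\sigma}_{6,2l}=0$ describing $\im P_{6,l}$. For odd $l$ the inclusion into $\im A'_{6,l}$ already gives a necessary condition, so you do not need Part II there.

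The gap is in part (2). Write $x=(u,0)$. The elements of $\im F_{6,l}$ with integral part $u$ are exactly the $(u,B_{6,l}v+C_{6,l}\tau)$ with $A_{6,l}v=u$, and $v$ is unique because $A_{6,l}$ is injective. So $x\in\im F_{6,l}$ if and only if $B_{6,l}v\in\im C_{6,l}$.

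Your first fix claims that the congruences force $u/2\in\im A_{6,l}$; this is false. For $\cp^6\times D^2$, take $u=(\overline{\sigma}_{1,2},\overline{\sigma}_{3,2},\overline{\sigma}_{5,2})=(-2,44,-662)$, the first column of $A'_{6,1}$. It satisfies all three congruences of Table~\ref{tabsplitinv}, and $(u,0)=2\cdot((-1,22,-331),0)$ is divisible by $2$. But $u/2$ violates $\overline{\sigma}_{1,2}\equiv 0\bmod 2$, so it is not even in $\im A'_{6,1}$.

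Your fallback is a correct argument, but for a different statement: $x$ is smoothable when $x=2x'$ and the invariants of $x'$ (not of $x$) satisfy the congruences. So (2) as stated is not proved. What is missing is $B_{6,l}v\in\im C_{6,l}$. That condition involves the $\Z_2$-valued (Arf) invariants and the torsion of the normal invariants, which this paper does not compute.

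The paper's proof does not supply this step either. For even $n+l$ it notes that the vanishing $\Z_2$-invariants ``are in $\im B_{n,l}$ and $\im C_{n,l}$'' and concludes. For odd $n+l$ it lifts $\eta^{TOP}_{n,2l}(x)$ to $\sN^{DIFF}_{\partial}$ and uses that $\sigma^{TOP}_{n,2l}=0$ forces $\sigma^{DIFF}_{n,2l}=0$. That second route handles the index-$2$ ambiguity of $P_{n,l}$ without Part II. However, its first step (that $\eta^{TOP}_{n,2l}(x)$ lies in the image of $\sN^{DIFF}_{\partial}\to\sN^{TOP}_{\partial}$) meets the same issue, since $\im F_{n,l}$ need not equal $\im A_{n,l}\times(\im B_{n,l}+\im C_{n,l})$.

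So you have found a real weak point, but to finish you must either prove $B_{6,l}v\in\im C_{6,l}$ for the relevant $u$, or state (2) with the hypothesis of your fallback. The weakened version follows from the paper's ingredients alone:
for even $l$, exactly as you wrote;
for odd $l$, take $2w$ with $u=2A'_{6,l}w$ in the free part of the smooth normal invariants. All its $\Z_2$-invariants vanish, so it maps to $\eta^{TOP}_{6,2l}(x)$. The paper's surgery-sequence argument then produces a smooth structure mapping to $x$, without needing $P_{6,l}=\id$.
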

\begin{com}
    The torsion subgroups $T_{n,k}$ for $1\leq n,k\leq6$, as well as a full description of $\im(F_{\cpdt})$ in corresponding dimensions will be given in~\cite{Kaluzny-Macko-2026-part-II}. This will lead to an improved version of the above corollary, where we obtain a full characterization of the image of $F_{\cpdt}$ in terms of congruences.
\end{com}

\begin{remark}[Methods and Outline] \label{rem:methods-in-two-parts}
In our work, we follow the standard surgery method for determining the structure sets via the surgery exact sequence; see \cite[Ch.~11]{Lueck-Macko(2024)}. Section~\ref{sec:surgery} contains a description of this method applied to our case. Since we are in the simply-connected situation, the $L$-groups are known, and it remains to determine the normal invariants and the surgery obstructions. To determine the smooth normal invariants, which we do in Section~\ref{norminvsect}, it is convenient to use their description as the group of homotopy classes of maps to the classifying space $G/O$, the homotopy fiber of the map $J \colon BO \rightarrow BG$, whose induced map on the homotopy groups is known as the classical $J$-homomorphism. We proceed by looking at the long exact sequence induced by the fibration obtained from $J$ which gives a splitting of the normal invariants of $X=\CC P^n \times \text{D}^k$ into the free and torsion parts; see \eqref{norminvsplitting} and Lemma~\ref{suspensg/o}. The free part is given by the kernel of the map induced by $J$, the torsion part by the cokernel of the map induced by $\Omega J$. The free part is directly related to vector bundle theory, whereas the torsion part is directly related to stable homotopy theory. These two parts are studied by different sets of tools. This is the reason for splitting our work into two papers. 

Much is known about the homotopy type of the space $G/O$; see \cite[Ch.~5]{Madsen}. However, to study the free part, we use instead older papers by Adams and Adams-Walker; \cite{AdamsI,Adams,AdamsIII,adamsIV,Adams_Walker_1965}. Using their work, it is possible to obtain not only a description of the free part of the normal invariants as an abelian group, but also a good description of its generators in terms of specific bundles over $\Sigma^k\cp^n\vee S^k\simeq\CC P^n \times \text{D}^{k}/\cp^n\times S^{k-1}$. By Lemma~\ref{suspensg/o} one needs to find criteria for vector bundles over $\Sigma^k\cp^n$ to be trivial as spherical fibrations. The above works, together with an observation from a newer paper~\cite{federgitler}, give such criteria formulated in terms of genera corresponding to various multiplicative sequences of characteristic classes, in the sense of~\cite{milnor1974characteristic}, see~\eqref{equationkerJ}. Furthermore, the fact that the space $\Sigma^k\cp^n$ is a suspension means that all cup products vanish, which simplifies the criterion from~\eqref{equationkerJ} to~\eqref{eqn:simplified-equationkerJ}. The main task then boils down to determining integer solutions to a certain specific system of linear equations, whose coefficients are rational numbers coming from the multiplicative sequences mentioned above; see Lemma~\ref{generkerJlema}. This is then implemented in Python to obtain numerical results for specific values of $l$ and $n$; some cases are listed in Tables \ref{kerJgenertable}, \ref{kerJgenertable-l-big-n-4} that appear in Appendix. 

The next step is to determine surgery obstructions, which we do in Section~\ref{obstrsect}, thus proving Theorem~\ref{mainthm1}. Here, it is convenient to view the normal invariants in terms of degree one normal maps. The standard correspondence between the two descriptions comes from a version of the Pontryagin-Thom construction. The surgery obstruction in our setting turns out to be given by the signature. Once we have this, we can employ the Hirzebruch signature formula which says that the signature equals the $\sL$-genus. (Its use has to be slightly adapted since we have manifolds with boundary; see Lemma~\ref{gluinglema}.) This leads to the formula~\eqref{eqn:obstr-formula-Hirz-simplified}. On the other hand, we have a description of the generators of the free part of the normal invariants from the previous section via specific bundles whose characteristic classes are the input for the formula. Here, the situation is also simplified by vanishing cup products. This allows us to calculate the necessary characteristic classes as well as the $\sL$-genus and obtain formulas in terms of linear polynomials~\eqref{eqn:obstr-final-formula-even-l}. Again, some numerical values are computed algorithmically and listed in Tables \ref{signobstrtable1}, \ref{signobstrtable2}, \ref{signobstrtable3}.   

It is interesting to compare this with Brumfiel's work \cite{Brumfiel-(1970))}, who studied the case where $k=0$ and $n \leq 6$. We do not find his approach generalizable to higher $k$ and $n$ in an obvious way. On the other hand, one could, in principle, proceed by our methods also for $k=0$ and higher $n$, but due to non-vanishing cup products, it would be computationally more complex. So, indeed, the indication that calculating higher structure sets $\textstyle\sS_{\partial}^{DIFF}(\cpd)$ rather than $\textstyle\sS^{DIFF}(\cp^n)$ might be easier seems to be confirmed.

In Section \ref{nonsmooth} we study splitting invariants which are obtained as surgery obstructions on submanifolds $\cp^m\times\text{D}^{2l}$. Here the proofs of Theorems~\ref{mainthm2}, \ref{mainthm3} and Corollary~\ref{maincoroll} are completed.

In the second part \cite{Kaluzny-Macko-2026-part-II} we study the torsion subgroup. The methods will involve an adaptation to $k > 0$ of the spectral sequence used by Brumfiel to determine the torsion subgroup of the normal invariants when $k=0$ (which itself contains many more homotopy theoretic calculations), and calculations of the surgery obstruction which in that case is given by the Arf invariant instead of the signature. 
\end{remark}

\begin{remark}[Motivation] \label{rem:surgery-motivation}
Besides the determination of the structure sets and the forgetful map, our motivation also comes from the following two sources. 

Firstly, recall that given $N \geq 2$, we have an $S^1$-bundle $p_N \co L^{2n+1}_N \ra \CC P^n$ where $L^{2n+1}_N$ is the standard lens space with $\pi_1 (L^{2n+1}_N) \cong \ZZ/N$. There are induced transfer maps $(p_N)^{!} \co \sS^{\CAT}_{\partial} (\CC P^n \times \text{D}^k) \ra \sS^{\CAT}_{\partial} (L^{2n+1}_N \times \text{D}^k)$ for all $k \geq 0$. Using our results, we hope to obtain information about the forgetful map for the lens spaces as well.

Secondly, there are certain results on topological surgery that are obtained using the abelian group structure on $\sS^{\TOP} (X)$. An example is the long exact sequence of abelian groups 
\[
	\cdots \rightarrow \sS^{\TOP}_{\partial} (X \times \text{D}^k) \rightarrow \sS^{\TOP} (X \times S^k)  \rightarrow \sS^{\TOP} (X \times \text{D}^k) \rightarrow \cdots
\]
for $k \geq 3$, where the first map is given by extending by homeomorphism and the second map is given by Browder's splitting theorem~\cite[Ch. 11]{Wall(1999)}. This sequence appears in~\cite[Proposition 7.2.6 (ii)]{Ranicki(1981)} as the sequence associated to the pair $(X,Z) = (X \times S^k,X \times \text{D}^k)$. As we recalled above, it is known that on $\sS^{\DIFF} (X)$ there is no analogous abelian group structure. On the other hand, it is still possible that some results, such as an analog of the above sequence, might hold even in the smooth case; or, if they do not, it would be interesting to know how much they fail. In the future, we would like to use our calculations, together with those from \cite{Basu-Kasilingam-Sarkar(2025)}, to investigate this question for $X = \CC P^n$.
\end{remark}

\input{surgery}
\input{norminv}
\input{obstruction}
\input{splittinginv}
\newpage
\bibliography{bibliography}  
\bibliographystyle{alpha}
\newpage
\appendix
\input{appendix}

\end{document}

%% file: surgery.tex
\section{Surgery-Theoretical Background}
\label{sec:surgery}

The calculation of the structure sets $\sS_{\partial}^{DIFF}(\cpd)$ begins with the surgery exact sequence \cite[Section 11.5]{Lueck-Macko(2024)}, which is a sequence of pointed sets and is defined for $2n+k\geq5$. In our case, it divides into 
\begin{align}
\label{SESeven}
    \textstyle0\to\sS_{\partial}^{DIFF}(\cpd)\to\sN_{\partial}^{DIFF}(\cpd)\xrightarrow{\sigma_{n,k}^{DIFF}}L_{2n+k}(\Z)    
\end{align}
if $2n+k$ is even and 
\begin{align}
\label{SESodd}
    \textstyle\sN_{\partial}^{DIFF}(\cp^n\times \text{D}^{k+1})\xrightarrow{\sigma_{n,k+1}^{DIFF}}L_{2n+k+1}(\Z)\to\sS_{\partial}^{DIFF}(\cpd) \\ \nonumber
    \to\sN_{\partial}^{DIFF}(\cpd)\to0    
\end{align}
if $2n+k$ is odd. Here, 
\[  
    L_i(\Z)\cong
    \begin{cases}
        \Z & \text{if }i\equiv0\mod4 \\
        \Z_2 & \text{if }i\equiv2\mod4 \\
        0 & \text{otherwise}
    \end{cases}
\]
The set of normal invariants $\sN_{\partial}^{DIFF}(\cpd)$ consists of equivalence classes of normal maps of degree one from smooth manifolds to $\cpd$, which are diffeomorphisms on the boundary. The equivalence relation is given by normal bordism. More details can be found at the beginning of Section~\ref{obstrsect}, as well as in \cite[Chapter 7]{Lueck-Macko(2024)}. The surgery obstruction map $\sigma_i^{DIFF}:\sN_{\partial}^{DIFF}(X^i)\to L_i(\Z)$ is given by the difference of signatures when $i\equiv0\mod 4$, and by the Arf invariant of an associated quadratic form when $i\equiv2\mod4$. Moreover, if $k>0$ (which we assume in this paper), the terms in the sequences (\ref{SESeven}) and (\ref{SESodd}) are groups (abelian if $k > 1$), and $\sigma_i^{DIFF}$ is a homomorphism. For more details, see \cite[Chapters~8,~9]{Lueck-Macko(2024)}, especially Sections 8.7.6 and 9.4.4. 

From (\ref{SESeven}) and (\ref{SESodd}), we see that to compute $\sS_{\partial}^{DIFF}(\cpd)$ it is sufficient to determine the normal invariants $\sN_{\partial}^{DIFF}(\cpd)$ for all $n,k$ and the map $\sigma_{n,k}^{DIFF}$ for even $2n+k$. In this paper, we concentrate on the case where $2n+k$ is divisible by $4$; in part II, we deal with the case where $2n+k$ is $2\mod4$.

%% file: norminv.tex
\section{The Normal Invariants}
\label{norminvsect}
In this section, we study the group $\sN_{\partial}^{\DIFF} (\cpd)$ when $k>0$. As discussed in Remark~\ref{rem:complexity-k-is-0-gener-ker-J}, in more detail, the computation is somewhat simplified compared to the case $k=0$ studied by Brumfiel in \cite{Brumfiel-(1970))}. Similarly as in that case, we obtain a splitting as a direct sum of a free abelian group and a torsion group, see Corollary~\ref{cor:norm-inv-splitting}, but in our case we are able to determine the free part for any $n,k\in\N$ as the subgroup of integral solutions of certain systems of linear equations; see Lemma~\ref{generkerJlema}. These systems can be solved algorithmically, we compute specific values of a particular choice of generators  for some low $k$ and $n\leq18$; see Tables~\ref{kerJgenertable},~\ref{kerJgenertable-l-big-n-4}. Brumfiel uses a different approach via Adams operations and only states results without computation for $\cp^n$, $n\leq6$. 

Recall the space $G/O$ defined as the homotopy fiber of the map of classifying spaces $J:BO\to BG$ representing the spherical fibration associated to the universal bundle over $BO$. This space is relevant since, for any compact smooth manifold $X$, we have an isomorphism $\sN_{\partial}^{DIFF}(X)\cong[X/\partial X,G/O]$. Since 
\[
\textstyle\cpd/\cp^n\times S^{k-1} = Th(\cpd)\simeq\Sigma^k\cp^n_+\simeq\Sigma^k\cp^n\vee S^k,
\]
the normal invariants split as 
\begin{align}
\label{norminvsplitting}
    \sN_{\partial}^{DIFF}(\cpd)\cong[\Sigma^k\cp^n,G/O]\oplus\pi_k(G/O).    
\end{align}
The homotopy groups of $G/O$ are known in low dimensions \cite[Remark 9.22]{ranicki2002algebraic}. The long exact sequence obtained by applying $[\Sigma^k\cp^n,\blank]$ to the fibration sequence associated to $J$ gives
\begin{align}
\label{sesnormalinv}
    \textstyle0\to\coker[\Sigma^k\cp^n,\Omega J]\xrightarrow{}[\Sigma^k\cp^n,G/O]\xrightarrow{[\Sigma^k\cp^n,r]}\ker[\Sigma^k\cp^n,J]\to 0
\end{align}
where $r \colon G/O \rightarrow BO$ is the standard inclusion.
\begin{lema}
\label{suspensg/o}                    
    There is a splitting of the sequence \eqref{sesnormalinv}:
    \[
    \textstyle[\Sigma^{k}\cp^n,G/O]\cong\coker[\Sigma^{k}\cp^n,\Omega J]\oplus\ker[\Sigma^{k}\cp^n,J]
    \]
    where $\ker[\Sigma^{k}\cp^n,J]\cong\Z^{t_{n,k}^{'}}$ with
    \begin{align*}
        t_{n,k}^{'}=
        \begin{cases}
            \lfloor\frac{n}{2}\rfloor &\text{if }k\equiv0\mod4\text{ or }k\equiv2\mod4\text{ and }n\text{ is even} \\
            \lfloor\frac{n}{2}\rfloor+1 &\text{if }k\equiv2\mod4\text{ and }n\text{ is odd} \\
            0 & \text{if }k\text{ is odd}
        \end{cases}
    \end{align*}
    Furthermore, $\coker[\Sigma^{k}\cp^n,\Omega J]$ is the torsion subgroup, and
    \begin{align*}
        \textstyle\coker[\Sigma^{2l}\cp^n,\Omega J]\cong
        \begin{cases}
            [\Sigma^{2l}\cp^n,G]/\Z_2 & \text{if }(l,n)\equiv(1,3)\text{ or } (3,1)\mod4 \\
            [\Sigma^{2l}\cp^n,G] & \text{otherwise}
        \end{cases}
    \end{align*}
\end{lema}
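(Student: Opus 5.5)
The plan is to work with the long exact sequence of groups obtained by applying $[\Sigma^k\cp^n,\blank]$ to the fibration $G/O\xrightarrow{r}BO\xrightarrow{J}BG$ and its loopings. All terms are abelian groups because the source is a suspension with $k\geq 1$, and they are even co-H-group homotopy sets into infinite loop spaces. Rationally $BG$ and $G$ are contractible, since $\pi_*(G)=\pi_*^s$ is finite in positive degrees. Consequently $[\Sigma^k\cp^n,G]$ and $[\Sigma^k\cp^n,BG]$ are finite; this follows by induction over the cells of $\Sigma^k\cp^n$ using the finiteness of stable stems.

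With this in hand:
\begin{enumerate}
\item Since $[\Sigma^k\cp^n,G]$ is finite, $\coker[\Sigma^k\cp^n,\Omega J]$ is a finite group.
\item Since $[\Sigma^k\cp^n,BG]$ is finite, $\ker[\Sigma^k\cp^n,J]$ has finite index in $[\Sigma^k\cp^n,BO]=\widetilde{KO}^0(\Sigma^k\cp^n)=\widetilde{KO}^{-k}(\cp^n)$.
\item A subgroup of a finitely generated abelian group that is free is again free. So the sequence \eqref{sesnormalinv} will split as soon as we show that $\ker[\Sigma^k\cp^n,J]$ is free. Then $\coker[\Sigma^k\cp^n,\Omega J]$ is exactly the torsion subgroup of $[\Sigma^k\cp^n,G/O]$.
\end{enumerate}

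The rank computation reduces to $\widetilde{KO}^{-k}(\cp^n)$. Rationally this group is $\bigoplus_j \widetilde H^{4j-k}(\cp^n;\QQ)$, so its rank counts the even degrees $2i$, $1\le i\le n$, with $2i\equiv -k \pmod 4$.
\begin{itemize}
\item If $k$ is odd there are no such degrees, and the rank is $0$.
\item If $k\equiv 0\pmod 4$ we need $i$ even, giving $\lfloor n/2\rfloor$.
\item If $k\equiv 2\pmod 4$ we need $i$ odd, giving $\lceil n/2\rceil$. This equals $\lfloor n/2\rfloor+1$ for $n$ odd and $\lfloor n/2\rfloor$ for $n$ even, matching $t'_{n,k}$.
\end{itemize}

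The main obstacle is freeness: the torsion of $\widetilde{KO}^{-k}(\cp^n)$ must not survive into $\ker J$. Here I would invoke the known computations of $KO^*(\cp^n)$ (Fujii, Sanderson). The torsion is at most $\Z_2$, and it occurs only in the cases $k\equiv 1,2\pmod 8$ with suitable parity of $n$. On this $\Z_2$-torsion one checks that $J$ is injective. The torsion classes are detected on bottom cells through $\eta$-multiples, where $J$ is the classical $J$-homomorphism, and the image of $J$ in degrees $1,2\pmod 8$ is injective by Adams. This is consistent with $\ker J$ being free and with the rank formula, since odd $k$ must give rank $0$ and the $\Z_2$ there must not lie in the kernel.

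For the last formula we identify $\coker[\Sigma^{2l}\cp^n,\Omega J]$ as $[\Sigma^{2l}\cp^n,G]$ modulo the image of $[\Sigma^{2l}\cp^n,O]=\widetilde{KO}^{-2l-1}(\cp^n)$. For $k=2l+1$ odd this $KO$-group is torsion and is nonzero, equal to $\Z_2$, exactly when the Fujii/Sanderson pattern gives it. I expect this to be precisely when $(l,n)\equiv(1,3)$ or $(3,1)\pmod 4$, after re-indexing $2l+1\equiv 3,7\pmod 8$ against the parity conditions in $n$; this needs checking against the tables. In those cases Adams' injectivity of $J$ on $\Z_2$-summands shows that the image in $[\Sigma^{2l}\cp^n,G]$ is a $\Z_2$. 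In all other cases $\widetilde{KO}^{-2l-1}(\cp^n)$ is zero, so the cokernel is all of $[\Sigma^{2l}\cp^n,G]$.
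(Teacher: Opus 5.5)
Your reduction is sound: the finiteness of $[\Sigma^k\cp^n,G]$ and $[\Sigma^k\cp^n,BG]$ holds, the sequence splits once $\ker[\Sigma^k\cp^n,J]$ is torsion-free, $\coker[\Sigma^k\cp^n,\Omega J]$ is then the torsion subgroup, and your rank count via $\widetilde{KO}^{-k}(\cp^n)\otimes\Q$ is correct. That count is a cleaner route than reading the rank off Fujii's generators, as the paper does. The gap is exactly the step you flag as the main obstacle, and your description of the torsion there is wrong. By Lemma~\ref{KOCP}, the $\Z_2$ in $\widetilde{KO}^{-k}(\cp^n)$ occurs for $k\equiv 0,4\pmod 8$ with $n\equiv 1,3\pmod 4$, and for $k\equiv 3,7\pmod 8$ with $n\equiv 3,1\pmod 4$. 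It never occurs for $k\equiv 1,2\pmod 8$. The residues $1,2\pmod 8$ are those of $2n+k$, the dimension of the top cell of $\Sigma^k\cp^n$, not of $k$. Your claim also contradicts your own later, correct, expectation $(l,n)\equiv(1,3),(3,1)\pmod 4$. Moreover, these classes live on the top cell, not on bottom cells: for example, $\mu_0^{2t+1}\in\widetilde{KO}^0(\cp^{4t+1})$ restricts to zero on $\cp^{4t}$.

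For odd $k$ your argument can be repaired. In those cases $\widetilde{KO}^{-k}(\cp^{n+1})=0$, so restriction along the suspended attaching map $\Sigma^kH_n\colon S^{2n+k+1}\to\Sigma^k\cp^n$ is injective on the $\Z_2$. Its target is $\pi_{2n+k+1}(BO)$ with $2n+k+1\equiv 2\pmod 8$, so naturality of $J$ plus Adams' injectivity there gives what you want. This also gives the cokernel formula. For even $k$, however, the torsion class is $c^*\beta$, where $c\colon\Sigma^k\cp^n\to S^{2n+k}$ is the collapse and $0\neq\beta\in\pi_{2n+k}(BO)$ with $2n+k\equiv 2\pmod 8$. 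Restriction along $\Sigma^kH_n$ lands in $\pi_{2n+k+1}(BO)=0$, so that argument is unavailable. Naturality only gives $J(c^*\beta)=c^*J(\beta)$ with $J(\beta)\neq 0$. But $c^*\colon\pi_{2n+k}(BG)\to[\Sigma^k\cp^n,BG]$ need not be injective: its kernel is the image of precomposition with $\Sigma^{k+1}H_{n-1}$ on $[\Sigma^{k+1}\cp^{n-1},BG]$. So injectivity of the classical $J$-homomorphism does not show $J(c^*\beta)\neq 0$; you need control of $\ker J$ on the whole complex. The paper gets this from the Adams conjecture: $2$-locally, $\ker J$ is generated by elements $\Psi^b_{\R}(y)-y$ with $b$ odd. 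Combined with naturality of the Adams operations, this lets one compare with the sphere, where $\ker J=0$. Without such an input, freeness of $\ker[\Sigma^k\cp^n,J]$ remains unproved in the cases $(l,n)\equiv(0,1),(2,3)\pmod 4$ with $n>1$. Hence the splitting, the rank statement and the identification of the torsion subgroup also remain unproved there.
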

\noindent\textbf{Proof: }From Lemma \ref{KOCP} in the Appendix we see that $\widetilde{KO}^{-2l}(\cp^n)\cong\Z^{t_{n,2l}^{'}}$ up to elements of order two:
\[
    g_{\R}^{s}\cdot\mu_0^{t_{4m+1,8s}^{'}+1}\in\widetilde{KO}^{-8s}(\cp^{4m+1})\text{ and } g_{\R}^{s}\cdot\mu_2\mu_0^{t_{4m+3,8s+4}^{'}}\in\widetilde{KO}^{-8s-4}(\cp^{4m+3}).
\]

Suppose for now that these elements of order two have non-zero image under $[\Sigma^{2l}\cp^n,J]$. Since $[\Sigma^{2l}\cp^n,BG]$ is finite (this follows from the finitness of $\pi_i^s$, $i>0$ using the Atiyah-Hirzebruch spectral sequence), we get $\ker[\Sigma^{2l}\cp^n,J]\cong\Z^{t_{n,2l}^{'}}$. That proves the second isomorphism. The short exact sequence (\ref{sesnormalinv}) then splits, which gives the first isomorphism.

Now, again by Lemma~\ref{KOCP}, $\widetilde{KO}^{-2l-1}(\cp^n)=0$ for $(l,n)\not\equiv(1,3),(3,1)\mod4$. It follows that in these cases $\coker[\Sigma^{2l}\cp^n,\Omega J]\cong[\Sigma^{2l}\cp^n,G]$. In the other cases, $\widetilde{KO}^{-2l-1}(\cp^n)\cong\Z_2$. Denote by $\alpha_{n,l}$ the generator of this group. We now show that $[\Sigma^{2l}\cp^n,\Omega J](\alpha_{n,l})\neq0$. First, we observe that $[\Sigma^{2l}\cp^n,\Omega J]$ is the map $$\textstyle\widetilde{KO}^{-2l-1}(\cp^n)\xrightarrow{[\Sigma^{2l+1}\cp^n,J]}[\Sigma^{2l+1}\cp^n,BG].$$ Secondly, we apply $[\blank,BO]$ to the cofibration sequence 
\[
\textstyle\Sigma^{2l}S^{2n+2}\xrightarrow{\Sigma^{2l}H_{n}}\Sigma^{2l+1}\cp^{n}\xrightarrow{\Sigma^{2l+1}i_{n+1,n}}\Sigma^{2l+1}\cp^{n+1}\xrightarrow{\Sigma^{2l+1}j_n}\Sigma^{2l+1}S^{2n+2}
\]
where $H_n$ is the attaching map of the top cell of $\cp^{n+1}$, $i_{n+1,n}:\cp^n\hookrightarrow\cp^{n+1}$ is the standard embedding, and $j_n:\cp^{n+1}\to S^{2n+2}$ collapses the $(2n+1)$-skeleton. We get for $(l,n)\equiv(1,3)$ or $(3,1)\mod4$: 
\[
    \widetilde{KO}^{-2l-1}(\cp^n)\xrightarrow[\cong]{\widetilde{KO}^0(\Sigma^{2l}H_n)}\widetilde{KO}^{0}(S^{2l+2n+2})\cong\Z_2.
\]
Now, the Adams conjecture, proved in \cite{quillen1971adams}, implies that after localizing the map $J$ at $2$, $\ker[\Sigma^{2l+1}\cp^n,J_{(2)}]$ is generated by elements 
\[
\{\Psi_{\R}^b(\alpha_{n,l})-\alpha_{n,l}\}, b\in\Z, b\text{ odd}
\]
(this follows from \cite[Proposition 3.1]{Adams} and \cite[Theorem 1.1]{AdamsIII}) where $\Psi_{\R}^b$ is the $b$-th Adams operation \cite[Section 5]{Adams_vect}. The same holds for the kernel of $[S^{2l+2n+2},J_{(2)}]$. Since the Adams operations are natural, we have: 
\[
\ker[\Sigma^{2l+1}\cp^n,J_{(2)}]\xrightarrow[\cong]{\widetilde{KO}^0(\Sigma^{2l}H_n)}\ker[S^{2l+2n+2},J_{(2)}].
\]
But $\ker[S^{2l+2n+2},J]=0$ by \cite[Theorem 1.3]{adamsIV}, so $[\Sigma^{2l+1}\cp^n,J]$ is a monomorphism and $$\textstyle\coker[\Sigma^{2l}\cp^n,\Omega J]\cong[\Sigma^{2l}\cp^n,G]/\Z_2.$$ Moreover, $[\Sigma^{2l}\cp^n,G]$ is finite, which proves that $\coker[\Sigma^{k}\cp^n,\Omega J]$ is the torsion subgroup.

It remains to show that the elements $g_{\R}^{s}\cdot\mu_0^{t_{4m+1,8s}^{'}+1}$ and $g_{\R}^{s}\cdot\mu_2\mu_0^{t_{4m+3,8s+4}^{'}}$ from the first paragraph have non-zero images under $[\Sigma^{2l}\cp^n,J]$. The proof starts by noticing that $\widetilde{KO}^{-8s}(\cp^{4m+1}/\cp^{4m})\cong\widetilde{KO}^0(S^{8(s+m)+2})\cong\Z_2$ is generated by $g_{\R}^{s}\cdot\mu_0^{t_{4m+1,8s}^{'}+1}$ (and similarly for $g_{\R}^{s}\cdot\mu_2\mu_0^{t_{4m+3,8s+4}^{'}}$). The rest follows analogously to the proof of the isomorphism $\im[\Sigma^{2l}\cp^n,\Omega J]\cong\Z_2$ above.   \qed
\begin{cor}
\label{cor:norm-inv-splitting}
    There is a splitting 
    \begin{align}
        \sN_{\partial}^{DIFF}(\cpd)\cong\Z^{t_{n,k}^{'}+\epsilon_k}\oplus T_{n,k}^{'}
    \end{align}
    where $T_{n,k}^{'}$ is some finite abelian group, and $\epsilon_k=1$ if $k\equiv0\mod4$ and zero otherwise.
\end{cor}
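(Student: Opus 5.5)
The corollary should follow by assembling the splitting \eqref{norminvsplitting} with Lemma~\ref{suspensg/o} and the known homotopy groups of $G/O$. The plan is to identify $\sN_{\partial}^{DIFF}(\cpd)$ with $[\Sigma^k\cp^n,G/O]\oplus\pi_k(G/O)$. The first summand is already described by Lemma~\ref{suspensg/o}: it is $\Z^{t'_{n,k}}$ plus the finite group $\coker[\Sigma^k\cp^n,\Omega J]$. The groups in question are abelian, since $k>0$ makes these homotopy sets of maps out of suspensions, and both summands are abelian. So it remains to compute the rank of $\pi_k(G/O)$ and check that its torsion is finite.

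For $\pi_k(G/O)$ I would use the long exact sequence of the fibration $G/O\to BO\xrightarrow{J}BG$:
\[
\pi_{k+1}(BO)\to\pi_{k+1}(BG)\to\pi_k(G/O)\to\pi_k(BO)\xrightarrow{J}\pi_k(BG).
\]
Here $\pi_k(BG)\cong\pi_{k-1}^s$ is finite for $k\geq 2$. For $k=1$ we have $\pi_1(BG)=\Z_2$, and $\pi_1(G/O)=0$ in any case. The group $\pi_k(BO)$ is $\Z$ for $k\equiv0\mod4$ and finite ($\Z_2$ or $0$) otherwise, by Bott periodicity. Hence $\ker(J)$ on $\pi_k$ has rank $1$ exactly when $k\equiv0\mod4$, and is finite otherwise. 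Since $\pi_{k+1}(BG)$ is finite, its image in $\pi_k(G/O)$ is finite, so $\pi_k(G/O)$ is an extension of $\ker J$ by a finite group. This gives $\pi_k(G/O)\cong\Z^{\epsilon_k}\oplus(\text{finite})$, because any finitely generated abelian group of rank $\epsilon_k$ splits as free plus torsion. Alternatively, one can cite \cite[Remark 9.22]{ranicki2002algebraic} and the finiteness of $\Theta_k$ (Kervaire–Milnor).

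Finally I would set $T'_{n,k}=\coker[\Sigma^k\cp^n,\Omega J]\oplus\mathrm{tors}\,\pi_k(G/O)$, which is finite by Lemma~\ref{suspensg/o}, and the free rank is $t'_{n,k}+\epsilon_k$.

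There is no serious obstacle. The only care needed is to confirm that the splitting \eqref{norminvsplitting} is a group isomorphism, and to handle the small case $k=1$ separately. The splitting comes from the wedge decomposition $\Sigma^k\cp^n_+\simeq\Sigma^k\cp^n\vee S^k$, which holds after one suspension, and $[A\vee B,Y]\cong[A,Y]\times[B,Y]$ as groups when $A$ and $B$ are suspensions. In the case $k=1$, $\pi_1(G/O)=0$ and Lemma~\ref{suspensg/o} gives rank $0$, consistent with $\epsilon_1=0$.
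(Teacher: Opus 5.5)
Your proposal is correct and follows the same route as the paper. You combine the splitting \eqref{norminvsplitting} with Lemma~\ref{suspensg/o}, and you read off $\pi_k(G/O)\cong\Z^{\epsilon_k}\oplus(\text{finite})$ from the long exact homotopy sequence of $G/O\to BO\xrightarrow{J}BG$; you also spell out the details the paper's short proof leaves implicit, namely the finiteness of $\pi_*(BG)$, Bott periodicity, the case $k=1$, and why the wedge splitting is a group isomorphism.
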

\noindent\textbf{Proof: }The homotopy groups $\pi_k(G/O)$ are isomorphic to a direct sum of $\Z$ with a finite abelian group if $k\equiv0\mod4$ and are finite abelian otherwise. This follows from the long exact sequence of the homotopy groups of the fibration associated to $J$. This knowledge, together with the splitting \eqref{norminvsplitting} and Lemma~\ref{suspensg/o}, gives us the statement. \qed 
\begin{com}
\label{com:relation-exponents}
    Observe that the relation between the exponent $t_{n,k}$ from Theorem~\ref{mainthm1} and $t_{n,k}^{'}$ from Lemma~\ref{suspensg/o} is by their definition:
    \begin{align}
    \label{eqn:relation-between-exponents}
        t_{n,k}=
        \begin{cases}
            t_{n,k}^{'}+\epsilon_k-1 & \text{if }2n+k\equiv0\mod4 \\
            t_{n,k}^{'}+\epsilon_k & \text{otherwise}
        \end{cases}
    \end{align}
\end{com}
\begin{com}
\label{homotgroupG/O}
    The surgery obstruction map is a homomorphism with target $\Z_2$ if $2n+k\equiv2\mod4$. Since by (\ref{SESeven}) the structure set $\sS_{\partial}^{DIFF}(\cpdt)\cong\ker\sigma_{n,2l}^{DIFF}$, these two facts give us together with Corollary~\ref{cor:norm-inv-splitting} the statement of Theorem \ref{mainthm1} in those dimensions.
\end{com}
In order to compute the surgery obstructions, the results of Lemma \ref{suspensg/o} are not sufficient. We now present how to obtain a specific description of elements in $\ker[\Sigma^{k}\cp^n,J]$ as stable bundles in $\widetilde{KO}^0(\Sigma^k\cp^n)$. 

Recall that the Pontryagin character $ph:\widetilde{KO}^*(X)\to \widetilde{H}^*(X;\Q)$ is given by the composition 
\[
\widetilde{KO}^*(X)\xrightarrow{c}\widetilde{K}^*(X)\xrightarrow{ch}\widetilde{H}^*(X;\Q)
\]
of the complexification map $c$ with the Chern character.
Let $$\textstyle sh:\widetilde{KO}^0(\Sigma^k\cp^n)\to1+\displaystyle\prod_{s>0}\textstyle\widetilde{H}^{4s}(\Sigma^k\cp^n;\Q)$$ denote the characteristic class, whose universal class in $H^*(BO;\Q)$ corresponds (as a multiplicative sequence) to the formal power series $$\dfrac{e^{\frac{x}{2}}-e^{\frac{-x}{2}}}{x}$$For more details, consult \cite[\S5]{Adams}. Define coefficients $\alpha_{2s}$ by $$\log\left(\dfrac{e^{\frac{x}{2}}-e^{\frac{-x}{2}}}{x}\right)=\sum_{s=1}^\infty\alpha_{2s}\dfrac{x^{2s}}{(2s)!}$$ There is an identity obtained by Adams relating $sh$ to the Pontryagin character, which will be useful in our calculations:
\begin{lema}\cite[5.2]{Adams}
    \label{shlog}
    For any space $X$ and $x\in\prod_{i>0}\widetilde{H}^{4i}(X;\Q)$ define $\log(1+x)$ by means of the usual power series expansion. Then $$\log sh(\xi)=\sum_{s=1}^\infty\frac{\alpha_{2s}}{2}ph_{2s}(\xi)$$ where $ph_{t}$ is the summand of the Pontryagin character belonging to $\widetilde{H}^{2t}(X;\Q)$.
\end{lema}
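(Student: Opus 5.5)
The plan is to prove the identity by the splitting principle, after reducing to a universal case where everything is computed in terms of formal Pontryagin roots. The idea is that both sides are additive in $\xi$ and agree on a single formal root. The left side is additive because $sh$ is a multiplicative characteristic class, so $sh(\xi\oplus\eta)=sh(\xi)\,sh(\eta)$. The right side is additive because $ph$ is additive, being the composition of the additive maps $c$ and $ch$.

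First, I will extend both sides to natural transformations $\widetilde{KO}^0(X)\to\prod_{s>0}\widetilde{H}^{4s}(X;\Q)$ that are homomorphisms from the additive group to the additive group.
\begin{itemize}
\item On the left, $\log$ is well defined on elements of the form $1+x$ with $x$ in positive degrees. For a compact (finite CW) $X$ the power series is finite, and in general one passes to the inverse limit over skeleta.
\item Additivity lets me pass from honest bundles to virtual stable classes $\xi=[E]-[\R^N]$. Both sides vanish on trivial bundles, since $sh(\R^N)=1$ and $ph_{2s}(\R^N)=0$ for $s>0$.
\end{itemize}
By naturality it then suffices to check the identity for the universal bundle over $BO(N)$ for each $N$. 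Since $H^*(BO(N);\Q)$ injects into the rational cohomology of a maximal-torus flag space, I may write the complexification as $c(\xi)=\bigoplus_j (L_j\oplus\overline{L}_j)$, plus possibly a trivial line, with $c_1(L_j)=x_j$. The rational Pontryagin classes are then the elementary symmetric functions in the $x_j^2$.

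Next I compute both sides in terms of the roots.
\begin{itemize}
\item The power series $f(x)=\dfrac{e^{x/2}-e^{-x/2}}{x}$ is even, so it defines a multiplicative sequence in the Pontryagin classes. By the definition of a multiplicative sequence in the sense of \cite{milnor1974characteristic}, this gives $sh(\xi)=\prod_j f(x_j)$. Hence
\[
\log sh(\xi)=\sum_j\log f(x_j)=\sum_j\sum_{s\geq1}\alpha_{2s}\frac{x_j^{2s}}{(2s)!}.
\]
\item On the other side, $ph(\xi)=ch(c\xi)=\sum_j\bigl(e^{x_j}+e^{-x_j}\bigr)$, up to the constant rank term. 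The odd powers cancel, so the component in $\widetilde{H}^{4s}$ is
\[
ph_{2s}(\xi)=2\sum_j\frac{x_j^{2s}}{(2s)!}.
\]
\end{itemize}
Multiplying by $\alpha_{2s}/2$ and summing over $s$ gives exactly the expression for $\log sh(\xi)$, which proves the lemma. The degrees match: $ph_t$ lies in $H^{2t}$, so $ph_{2s}$ lies in $H^{4s}$, as does $x_j^{2s}$.

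The main point needing care is the bookkeeping of conventions rather than any deep step. There are two things to check.
\begin{enumerate}
\item The multiplicative sequence attached to $f$ must be normalized so that $sh$ of a bundle with formal roots $\pm x_j$ is $\prod_j f(x_j)$, and not $\prod_j f(x_j)^2$ or $\prod_j f(x_j^{1/2})$. This is precisely where the factor $\tfrac12$ in the statement comes from.
\item The passage to the inverse limit for infinite complexes such as $BO(N)$ must be justified. It is harmless because each graded piece involves only finitely many terms.
\end{enumerate}
I would fix the normalization by checking the identity directly on the realification of the Hopf line bundle over $\CC P^\infty$, where both sides can be computed explicitly.
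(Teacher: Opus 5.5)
Your proposal is correct. The paper gives no proof of its own; it simply quotes \cite[5.2]{Adams}, and your argument is the standard splitting-principle verification behind that citation: both sides turn Whitney sums into sums, so it suffices to treat a realified complex line bundle, where $ph = e^{x}+e^{-x}$ and $sh = (e^{x/2}-e^{-x/2})/x$.

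One point should be phrased differently. The normalization $sh(\xi)=\prod_j f(x_j)$, with one factor per conjugate pair of roots, is part of the definition of $sh$ (Adams' convention). Checking the realified Hopf bundle therefore confirms that this is the convention under which the stated factor $\tfrac12$ is correct; it does not fix the normalization independently.
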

The image of $[X,BO]\xrightarrow{[X,J]}[X,BG]$ was studied by Adams in \cite{AdamsI},\cite{Adams},\cite{AdamsIII} and \cite{adamsIV}, resp. by Adams and Walker for $X=\cp^n$ in \cite{Adams_Walker_1965}. In the latter, $V(\cp^n)$ was defined as the subgroup of elements $\xi\in\widetilde{KO}^0(\cp^n)$ for which there exists some $\beta\in\widetilde{KO}^0(\cp^n)$ such that $$sh(\xi)=ph(1+\beta),$$ and it was proved to be isomorphic to $\ker[\cp^n,J]$ unless $n=4t+1,$ $t>0$ when $\ker[\cp^n,J]$ is a subgroup of index 2 in $V(\cp^n)$.
We use the results of \cite[Section~2]{federgitler} to show that this strategy can be applied to the case where $X=\Sigma^k\cp^n$. For the purpose of proving the next lemma, we define 
\begin{align*}
\textstyle J'(\Sigma^k\cp^n)=\widetilde{KO}^0(\Sigma^k\cp^n)/V(\Sigma^k\cp^n), \\ 
\hspace{1pt}J(\Sigma^k\cp^n)=\widetilde{KO}^0(\Sigma^k\cp^n)/\ker[\Sigma^k\cp^n,J].
\end{align*}
\begin{lema}
\label{kerJeven}
    If $(l,n)\not\equiv(0,1)$ or $(2,3)\mod4$, then $\ker[\Sigma^{2l}\cp^n,J]=V(\Sigma^{2l}\cp^n)$.
\end{lema}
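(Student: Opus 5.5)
The plan is to compare the two quotients $J'(\Sigma^{2l}\cp^n)$ and $J(\Sigma^{2l}\cp^n)$, following \cite[Section~2]{federgitler}. The first step is to show that $\ker[\Sigma^{2l}\cp^n,J]\subseteq V(\Sigma^{2l}\cp^n)$. For this I would use Adams' $e$-invariant and $d$-invariant machinery \cite{AdamsI,Adams,AdamsIII,adamsIV}. If a bundle $\xi$ is fibre homotopy trivial, then its Thom class gives an orientation in $KO$-theory. Comparing that orientation with the standard Atiyah--Bott--Shapiro orientation produces a class $1+\beta$ with $sh(\xi)=ph(1+\beta)$. This holds on any finite complex (see \cite[Theorem 1.1]{AdamsIII}), so it holds on $\Sigma^{2l}\cp^n$ in particular. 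The step is therefore formal and gives a surjection $J(\Sigma^{2l}\cp^n)\twoheadrightarrow J'(\Sigma^{2l}\cp^n)$.

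The second step is the reverse inclusion. By the Adams conjecture \cite{quillen1971adams} together with \cite[Proposition 3.1]{Adams}, $\ker[X,J]$ contains the subgroup generated by $k^e(\Psi^k-1)\widetilde{KO}^0(X)$. Following Feder--Gitler, I would show that, after rationalizing and then localizing at each prime $p$, $V(X)$ coincides with the subgroup described by the Adams operations. This works whenever $\widetilde{KO}^0(X)$ is torsion-free, because then $ph$ is injective and the defining condition $sh(\xi)=ph(1+\beta)$ can be tested $p$-locally, as in the Adams--Walker argument for $\cp^n$. Both quotients are then identified with $\widetilde{KO}^0(\Sigma^{2l}\cp^n)$ modulo the same subgroup, which gives equality.

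Lemma~\ref{KOCP} tells us exactly when torsion-freeness fails. By Lemma~\ref{KOCP} and the proof of Lemma~\ref{suspensg/o}, $\widetilde{KO}^{-2l}(\cp^n)$ is free except for a $\Z_2$ summand generated by $g_\R^s\mu_0^{t'+1}$ (when $2l\equiv0 \bmod 8$, $n\equiv1\bmod4$) or by $g_\R^s\mu_2\mu_0^{t'}$ (when $2l\equiv4\bmod8$, $n\equiv3\bmod4$). These are precisely the excluded cases $(l,n)\equiv(0,1),(2,3)\bmod4$, which mirror the index-$2$ exception $n=4t+1$ in Adams--Walker. In all remaining cases the group is torsion-free, so the argument of the second step applies.

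The main obstacle I expect is the $2$-primary part of the second step. There one must check that an element of $V$ satisfying the $ph$-condition actually lies in the image of $\Psi^3-1$ (up to the $2$-adic factor) rather than in an index-$2$ overgroup. I would handle this by restricting to the bottom and top cells through the cofibration $\Sigma^{2l}\cp^{n-1}\to\Sigma^{2l}\cp^n\to S^{2l+2n}$. Using naturality of $sh$, $ph$ and $\Psi^k$, I would argue by induction on $n$, with the sphere case supplied by \cite[Theorem 1.3]{adamsIV} (where $V$ equals $\ker J$ for spheres of dimension $\not\equiv 1,2 \bmod 8$). This induction needs $2l+2n\not\equiv 2\bmod 8$ at the top cell. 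That condition fails only when $l+n\equiv1\bmod4$ together with the torsion condition, which again reduces to the excluded congruence classes.
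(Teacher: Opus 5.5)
Your Steps 1--3 are the paper's argument unpacked. The paper reads off from Lemma~\ref{KOCP} that $\widetilde{KO}^0(\Sigma^{2l}\cp^n)$ is torsion-free outside the classes $(0,1),(2,3)$. It then cites \cite[Corollary~2.7]{federgitler}, which says that for such spaces the surjection $J(\Sigma^{2l}\cp^n)\to J'(\Sigma^{2l}\cp^n)$ coming from your Step~1 is an isomorphism.

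The genuine problem is your last paragraph: the cell-by-cell induction on $n$ cannot run as stated. To pass from $\Sigma^{2l}\cp^{n-1}$ to $\Sigma^{2l}\cp^{n}$ you need $V=\ker J$ on $\Sigma^{2l}\cp^{n-1}$. For $l\equiv0\bmod4$, however, the very first stage $n=1$ lies in the excluded class $(0,1)$. For $l\equiv2\bmod4$, the stage $n=3$ lies in the excluded class $(2,3)$. In those classes $\ker J$ has index $2$ in $V$ (Remark~\ref{kerJunderembedding}), so the inductive hypothesis is false. Consequently the induction proves nothing for $l\equiv0\bmod4$, $n\geq2$, or for $l\equiv2\bmod4$, $n\geq4$, and these are all cases the lemma covers. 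There is a second problem: the top cell has dimension $\equiv2\bmod8$ whenever $l+n\equiv1\bmod4$, which includes the non-excluded classes $(l,n)\equiv(1,0),(3,2)$. There $V(S^{2l+2n})\cong\Z_2\neq0=\ker J$, so \cite{adamsIV} gives no input. What rescues the step in those cases is torsion-freeness (the top-cell $\Z_2$ must map to zero in a free group), not the sphere case.

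The way out is to notice that there is no separate $2$-primary obstacle once torsion-freeness is known, so the induction should simply be dropped. Since $\widetilde{KO}^0(\Sigma^{2l}\cp^n)$ is free, $ph$ stays injective on $\widetilde{KO}^0(\Sigma^{2l}\cp^n)\otimes\Z[1/k]$ for every $k$, odd $k$ included. Combine this with Adams' formula expressing $ph\circ\rho^k$ as $\Psi^k_H sh/sh$ \cite[\S5]{Adams}. Then the condition $sh(\xi)=ph(1+\beta)$ yields $\rho^k(\xi)=\Psi^k(1+\beta)/(1+\beta)$ for all $k$ with one and the same $\beta$, which is Adams' cannibalistic-class condition. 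That subgroup equals $\ker J$ by \cite{Adams,AdamsIII} together with the Adams conjecture \cite{quillen1971adams}. The index-$2$ phenomenon of Adams--Walker arises only from $2$-torsion invisible to $ph$ (inverting an odd $k$ does not invert $2$), which is exactly why torsion-freeness is the right hypothesis. With Step~2 made precise in this way, or replaced by the citation of Feder--Gitler as in the paper, your argument is complete.
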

\noindent\textbf{Proof: }In these cases $\widetilde{KO}^0(\Sigma^{2l}\cp^n)$ is torsion-free, so it follows from \cite[Corollary 2.7]{federgitler} that $J(\Sigma^{2l}\cp^n)\cong J'(\Sigma^{2l}\cp^n)$. The isomorphism is given by the second map in the factorization of the quotient map $q':\widetilde{KO}^0(\Sigma^{2l}\cp^n)\twoheadrightarrow J'(\Sigma^{2l}\cp^n)$:
\[
    \begin{tikzcd}
        \widetilde{KO}^0(\Sigma^{2l}\cp^n) \arrow["q",twoheadrightarrow]{r} \arrow["q'",twoheadrightarrow,bend right=15]{rr} & J(\Sigma^{2l}\cp^n) \arrow{r}{\theta'} & J'(\Sigma^{2l}\cp^n)
    \end{tikzcd}
    \qed
\]
\begin{com}
\label{kerJunderembedding}
    In the remaining cases, $J(\Sigma^{2l}\cp^n)\cong J'(\Sigma^{2l}\cp^n)\oplus\Z_2$. This is obvious since, from the proof of Lemma \ref{suspensg/o}, we have elements of order $2$ 
    $$g_{\R}^{s}\mu_{0}\mu_i^{t_{4m+j,8s+2i}^{'}}\notin \ker[\Sigma^{8s+2i}\cp^{4m+j},J],$$ but $$sh(g_{\R}^{s}\mu_{0}\mu_i^{t_{4m+j,8s+2i}^{'}})=0\hspace{5pt}\Rightarrow\hspace{5pt}g_{\R}^{s}\mu_{0}\mu_i^{t_{4m+j,8s+2i}^{'}}\in V(\Sigma^{8s+2i}\cp^{4m+j})$$ for $(i,j)=(0,1)$ or $(2,3)$. However, the specific elements in $\ker[\Sigma^{2l}\cp^n,J]$ can be obtained by restriction $\widetilde{KO}^0(i_{n+1,n})$ from $\ker[\Sigma^{2l}\cp^{n+1},J]$, as follows from the following diagram
    \[
    \begin{tikzcd}
        \widetilde{KO}^0(\Sigma^{2l}(\cp^{n+1}/\cp^{n})) \arrow{r}{\widetilde{KO}^0(j)} \arrow[twoheadrightarrow]{d} & \widetilde{KO}^0(\Sigma^{2l}\cp^{n+1}) \arrow[column sep=3cm]{r}{\widetilde{KO}^0(i_{n+1,n})} \arrow[twoheadrightarrow]{d} & [0.5cm] \widetilde{KO}^0(\Sigma^{2l}\cp^{n}) \arrow{r}{} \arrow[twoheadrightarrow]{d} & 0 \\ 
        J(\Sigma^{2l}(\cp^{n+1}/\cp^{n})) \arrow{r}{\widetilde{KO}^0(j)|_{J(\dots)}} & J(\Sigma^{2l}\cp^{n+1}) \arrow{r}{\widetilde{KO}^0(i_{n+1,n})|_{J(\dots)}} & J(\Sigma^{2k}\cp^{n}) \arrow{r}{} & 0 
    \end{tikzcd}
    \]
    obtained from \cite[3.12]{Adams} and \cite[1.1]{AdamsIII}.
\end{com}

So, for $(l,n)$ as in Lemma \ref{kerJeven}, the virtual (stable) bundle $\xi$ belongs to the subgroup $\ker[\Sigma^{2l}\cp^n,J]$ if and only if $sh(\xi)=ph(1+\beta)$ for some $\beta\in\widetilde{KO}^0(\Sigma^{2l}\cp^n)$. Since $\log$ is monomorphic, by using Lemma \ref{shlog}, this equation is equivalent to 
\begin{align}
\label{equationkerJ}
    \sum_{s=1}^\infty\frac{\alpha_{2s}}{2}ph_{2s}(\xi)=\log ph(1+\beta)=\sum_{i=1}^\infty(-1)^{i-1}\frac{ph(\beta)^i}{i}. 
\end{align}

We introduce the formal series 
\[
    h(x)=e^x-2+e^{-x}, \hspace{5pt} g_i(x)=(e^x-e^{-x})h(x)^i.
\]
\begin{lema}
\label{cherngener}
    Unless the generators of $\widetilde{KO}^0(\Sigma^{2l}\cp^{n})$ are of order two, their Pontryagin characters are given by:
    \begin{align*}
        ph(g_{\R}^s\cdot\mu_0^i)&=z^{\times8s}\times h(x)^i \\
        ph(g_{\R}^s\cdot\mu_2\mu_0^{i-1})&=z^{\times(8s+4)}\times h(x)^i \\
        ph(g_{\R}^s\cdot\mu_1\mu_0^{i-1})&=z^{\times(8s+2)}\times g_{i-1}(x) \\
        ph(g_{\R}^s\cdot\mu_3\mu_0^{i-1})&=z^{\times(8s+6)}\times g_{i-1}(x),\hspace{15pt} i\geq1 \\
        ph(g_{\R}^s\cdot\sigma)&=\frac{1}{2}ph(g_{\R}^s\cdot\mu_1\mu_0^{\lfloor\frac{n}{2}\rfloor}),\hspace{5pt} ph(g_{\R}^s\cdot\tau)=\frac{1}{2}ph(g_{\R}^s\cdot\mu_3\mu_0^{\lfloor\frac{n}{2}\rfloor})
    \end{align*}
    where $s=\lfloor\frac{l}{4}\rfloor$, $x$ generates  $H^2(\cp^{n};\Z)$, and the cross product with $z\in H^1(S^1;\Z)$ gives the suspension isomorphism $\widetilde{H}^i(X;\Z)\xrightarrow{\cong}\widetilde{H}^{i+1}(\Sigma X;\Z)$. 
\end{lema}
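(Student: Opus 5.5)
The generators $\mu_i$, $g_{\R}$, $\sigma$, $\tau$ are those of Lemma~\ref{KOCP} in the Appendix. The plan is to compute $ph = ch\circ c$ on each generator by first computing the complexification $c$ in terms of the complex generators, and then using that $ch$ is a ring homomorphism compatible with external products and with the Bott periodicity isomorphisms.

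\emph{Step 1: complex Bott classes.} I will use the standard description
$\widetilde{KO}^{-2l}(\cp^n)\cong\widetilde{KO}^0(\Sigma^{2l}\cp^n)$, where
\[
\mu_i\in\widetilde{KO}^{-2i}(\cp^n)\quad\text{and}\quad g_{\R}\in\widetilde{KO}^{-8}(\text{pt})\cong\widetilde{KO}^0(S^8).
\]
Let $\beta\in\widetilde K^0(S^2)$ be the complex Bott class. Then $ch(\beta)=z^{\times 2}$, the generator of $H^2(S^2;\Z)$, and $ch$ is multiplicative for external products. Hence
\[
ch(\beta^{j}\times y)=z^{\times 2j}\times ch(y).
\]

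\emph{Step 2: complexifications.} Recall $c(g_{\R})=\beta^4$, up to the standard identification. This is the step accounting for the $8s$ suspension coordinates. Next, with $H$ the Hopf line bundle and $\omega=H-1$,
\[
\mu_0=r(\omega),\qquad c\,r(\omega)=\omega+\bar\omega,\qquad ch(\omega+\bar\omega)=e^x+e^{-x}-2=h(x).
\]
Powers of $\mu_0$ give $h(x)^i$ by multiplicativity of $c$ and $ch$.

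For $\mu_1,\mu_2,\mu_3$, I use the description of these classes in Lemma~\ref{KOCP} as realifications of $\beta^{j}$-multiples of $\omega$ or of $\omega^2$-type expressions. Complexification then turns them into $\beta^{j}(\omega\pm\bar\omega)$-type classes:
\begin{itemize}
\item the odd ones $\mu_1,\mu_3$ give $\beta^{1}$ resp. $\beta^{3}$ times $\omega-\bar\omega$, with Chern character $z^{\times 2}\times(e^x-e^{-x})$, respectively $z^{\times 6}\times(\cdots)$;
\item $\mu_2$ gives $\beta^2$ times $\omega+\bar\omega$, with Chern character $z^{\times 4}\times h(x)$.
\end{itemize}
The sign in $\omega-\bar\omega$ comes from complex conjugation acting on $\beta$ by $-1$, so that $c\,r(\beta\omega)=\beta\omega+\overline{\beta\omega}=\beta(\omega-\bar\omega)$.

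\emph{Step 3: assembling the formulas.} Multiplying by $\mu_0^{i-1}$ yields $g_{i-1}(x)=(e^x-e^{-x})h(x)^{i-1}$ and $h(x)^i$, which gives the four displayed formulas with $s=\lfloor l/4\rfloor$.

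\emph{Step 4: the classes $\sigma,\tau$.} These are top-dimensional generators characterized in Lemma~\ref{KOCP} by
\[
2\sigma=\mu_1\mu_0^{\lfloor n/2\rfloor}\quad\text{(resp. } 2\tau=\mu_3\mu_0^{\lfloor n/2\rfloor}\text{)}
\]
modulo elements killed by $c$, in the cases where $c$ on that top cell has image divisible by $2$. Since $ph$ is a homomorphism into a torsion-free group, $ph(\sigma)=\tfrac12 ph(\mu_1\mu_0^{\lfloor n/2\rfloor})$, and likewise for $\tau$.

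\emph{Main obstacle.} The main difficulty is bookkeeping rather than conceptual. Three things must be pinned down exactly from the appendix definitions:
\begin{itemize}
\item the precise definitions of $\mu_1,\mu_2,\mu_3$ in terms of $r$ and $\beta$;
\item the relation $2\sigma=\mu_1\mu_0^{\lfloor n/2\rfloor}$;
\item the sign conventions for $c\circ r=1+\psi^{-1}$ in the presence of $\beta$.
\end{itemize}
I would first check low cases, such as $\cp^1$ and $\cp^2$ suspended, against known values to fix signs and normalizations. The order-two generators are exactly the ones excluded, since $ph$ kills them.
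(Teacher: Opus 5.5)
Your proposal is correct and is essentially the paper's proof: the paper likewise writes $ph=ch\circ c$, uses $c\circ r=\mathrm{id}+{}^{*}$ with conjugation acting by $-1$ on the complex Bott class $g=H-1\in\widetilde{K}^0(S^2)$, so that $c(\mu_1)=g\cdot(H-H^{-1})$. It then applies multiplicativity of $ch$ together with $ch(L)=e^{c_1(L)}$, does the case $g_{\mathbb{R}}^s\mu_1\mu_0^{i-1}$ explicitly, and calls the remaining cases analogous. Your hedges are unnecessary: the appendix defines $\mu_i=r(g^i\cdot\mu)$ exactly, and the relations $2\sigma=\mu_1\mu_0^{t}$ and $2\tau=\mu_3\mu_0^{t}$ hold on the nose, so your Step~4 argument (that $ph$ lands in a rational vector space) settles $\sigma$ and $\tau$ directly.
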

\noindent\textbf{Proof: }The proof will be presented for the case of $g_{\R}^s\cdot\mu_1\mu_0^{i-1}$. The calculation is analogous in the remaining cases.
Since $c\circ r=\id+*:\widetilde{K}^*(X)\to\widetilde{K}^*(X)$ \cite[3.9]{Adams_vect}, and the Chern character is given on complex line bundles $L$ by $e^{c_1(L)}$, we get from \cite[Theorem 3.25.a)]{karoubi} and the definition of $\mu_1$ and $\mu_0$ in Appendix that:
\begin{align*}
    ph(g_{\R}^s\cdot\mu_1\mu_0^{i-1})&=[ch(c(g_{\R}))]^sch(c(\mu_1))[ch(c(\mu_0))]^{i-1} \\
    &=[ch(c(g_{\R}))]^sch(g\cdot(H-H^{-1}))[ch(H-2+H^{-1})]^{i-1} \\
    &=[\underbrace{z\times\dots\times z}_\text{$8s$ times}]\times [z^{\times2}\times(e^x-e^{-x})(e^x-2+e^{-x})^{i-1}] \qed
\end{align*}

Denote by $[h(x)^j]_i$ the coefficient of $x^i$ in the power series of $h(x)^j$ (and similarly for $g_j(x)$). Define:
\[
    a_i=\frac{2}{\alpha_{2\left(\lfloor\frac{l}{2}\rfloor+i\right)}}\text{ and }r_l=
    \begin{cases}
        \lceil\frac{n}{2}\rceil & \text{if $l$ is odd} \\
        \lfloor\frac{n}{2}\rfloor & \text{if $l$ is even}
    \end{cases}
\]
We now prove that for $l>0$ the equation~\eqref{equationkerJ} leads to a linear system over $\Q$ described by one of the following $(r_l\times2r_l)$-matrices:
\begin{align}
    \label{eqn:kerJ-matrix1}
    &
    \begin{pNiceMatrix}[nullify-dots, columns-width=6pt, xdots/inter=10pt]
        \frac{1}{a_1}                         & 0      & & \Cdots[shorten=15pt] & 0 
        & -1                              & 0      & & \Cdots[shorten=15pt] & 0 \\
        \frac{1}{a_2}h(x)_4                   & \Ddots[shorten=20pt] & & & \Vdots[shorten=5pt] 
        & -h(x)_4                         & \Ddots[shorten=18pt] & & & \Vdots[shorten=5pt] \\
        \Vdots                                &        & &        &  
        & \Vdots                          &        & &        &  \\
                                          &        & &        & 0 
        &                                 &        & &        & 0 \\
        \frac{1}{a_{r_l}}h(x)_{2r_l}                & \Cdots[shorten=5pt] & & \frac{1}{a_{r_l}}[h(x)^{r_l-1}]_{2r_l} & \frac{1}{a_{r_l}} 
        & -h(x)_{2r_l}                      & \Cdots[shorten=5pt] & & -[h(x)^{r_l-1}]_{2r_l} & -1
    \end{pNiceMatrix} 
\end{align}

    if $l$ is even, with an additional column of zeros at the end of both blocks if $(l,n)\equiv(0,1)$ or $(2,3)\mod4$,
\begin{align}
    \label{eqn:kerJ-matrix2}
    &
    \begin{pNiceMatrix}[nullify-dots, columns-width=6pt, xdots/inter=10pt]
        \frac{2}{a_1}                         & 0      & & \Cdots[shorten=15pt] & 0 
        & -2                              & 0      & & \Cdots[shorten=15pt] & 0 \\
        \frac{1}{a_2}g_0(x)_4                   & \Ddots[shorten=20pt] & & & \Vdots[shorten=5pt] 
        & -g_0(x)_4                         & \Ddots[shorten=18pt] & & & \Vdots[shorten=5pt] \\
        \Vdots                                &        & &        &  
        & \Vdots                          &        & &        &  \\
        &  & & & 0 
        &    & & &  & 0 \\
        \frac{1}{a_{r_l}}g_0(x)_{2r_l}                & \Cdots[shorten=5pt] & & \frac{1}{a_{r_l}}g_{r_l-2}(x)_{2r_l} & \frac{2}{a_{r_l}} 
        & -g_0(x)_{2r_l}                      & \Cdots[shorten=5pt] & & -g_{r_l-2}(x)_{2r_l} & -2
    \end{pNiceMatrix} \\ \nonumber
    &\text{if }l\text{ is odd and $(l,n)\not\equiv(3,1)$ or $(1,3)\mod4$},\text{ or}
\end{align}
\begin{align}
\label{eqn:kerJ-matrix3}
    &
    \begin{pNiceMatrix}[nullify-dots, columns-width=3pt, xdots/inter=10pt]
        \frac{2}{a_1}                            & 0      & & \Cdots[shorten=15pt] & 0 
        & -2                                 & 0      & & \Cdots[shorten=15pt] & 0 \\
        \frac{1}{a_2}g_0(x)_4                      & \Ddots[shorten=20pt] & &        & \Vdots[shorten=5pt] 
        & -g_0(x)_4                            & \Ddots[shorten=5pt] & &        & \Vdots[shorten=5pt] \\
        \Vdots[inter=3.5pt]                                   &        & &        &  
        & \Vdots[inter=5pt]                             &        & &        &  \\
        \frac{1}{a_{r_l-1}}g_0(x)_{2r_l-2}           & \Cdots[shorten=5pt] & & \frac{2}{a_{r_l-1}} & 0 
        & -g_0(x)_{2r_l-2}                     & \Cdots[shorten=5pt] & & -2     & 0 \\
        \frac{1}{a_{r_l}}g_0(x)_{2r_l}                   & \Cdots[shorten=5pt] & & \frac{1}{a_{r_l}}g_{r_l-2}(x)_{2r_l} & \frac{1}{a_{r_l}} 
        & -g_0(x)_{2r_l}                         & \Cdots[shorten=5pt] & & -g_{r_l-2}(x)_{2r_l} & -1
    \end{pNiceMatrix} \\
    \nonumber
    &\text{if $(l,n)\equiv(3,1)$ or $(1,3)\mod4$}.
\end{align}
\begin{lema}
\label{generkerJlema}
    For $l>0$, the subgroup $\ker[\Sigma^{2l}\cp^n,J]$ is given by restrictions to the first $r_l$ coordinates of integer solutions of the system corresponding to: 
    \begin{itemize}
        \item the matrix~\eqref{eqn:kerJ-matrix1} if $l$ is even and $(l,n)\not\equiv(0,1),(2,3)\mod4$,
        \item the matrix~\eqref{eqn:kerJ-matrix2} if $l$ is odd and $(l,n)\not\equiv(3,1),(1,3)\mod4$,
        \item the matrix~\eqref{eqn:kerJ-matrix3} if $(l,n)\equiv(3,1),(1,3)\mod4$.
    \end{itemize}
    
    For $0\leq l\leq3$ and $n=17$ or $18$, its generators are summarized in Table \ref{kerJgenertable}. 

    For $4\leq l\leq15$ and $n=4$ its generators are summarized in Table \ref{kerJgenertable-l-big-n-4}.
\end{lema}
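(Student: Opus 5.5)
The plan is to reduce membership in $\ker[\Sigma^{2l}\cp^n,J]$ to the linear equation \eqref{equationkerJ} and then read off its coefficient matrix in the basis of Lemma~\ref{KOCP}. First, in the cases $(l,n)\not\equiv(0,1),(2,3)\mod 4$, Lemma~\ref{kerJeven} gives $\ker[\Sigma^{2l}\cp^n,J]=V(\Sigma^{2l}\cp^n)$. So $\xi$ lies in the kernel if and only if $sh(\xi)=ph(1+\beta)$ for some $\beta\in\widetilde{KO}^0(\Sigma^{2l}\cp^n)$. By Lemma~\ref{shlog} and injectivity of $\log$ this is equivalent to \eqref{equationkerJ}.

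The key simplification is that $\Sigma^{2l}\cp^n$ is a suspension, so all products of positive-degree classes vanish. Hence $\log ph(1+\beta)=ph(\beta)$, and \eqref{equationkerJ} becomes
\[
\sum_{s\ge 1}\frac{\alpha_{2s}}{2}\,ph_{2s}(\xi)=ph(\beta).
\]
We write $\xi=\sum_j c_j\gamma_j$ and $\beta=\sum_j d_j\gamma_j$ in the integral basis $\{\gamma_j\}$ of $\widetilde{KO}^0(\Sigma^{2l}\cp^n)$ (modulo order-two elements) from Lemma~\ref{KOCP}, with $c_j,d_j\in\Z$. Lemma~\ref{cherngener} gives each $ph(\gamma_j)$ as $z^{\times 2l}\times h(x)^j$ for $l$ even, or $z^{\times 2l}\times g_{j-1}(x)$ for $l$ odd, possibly with the factor $\tfrac12$ for the generators $\sigma,\tau$.

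We compare coefficients of $z^{\times 2l}\times x^{m}$ in cohomological degree $2l+2m\equiv 0\mod 4$. The relevant $m$ are the $r_l$ values $m=2i$ ($l$ even) or $m=2i-1$ ($l$ odd), for $i=1,\dots,r_l$. In that degree the left-hand side picks up the factor $\alpha_{2(\lfloor l/2\rfloor+i)}/2=1/a_i$. This produces exactly the $i$-th rows of \eqref{eqn:kerJ-matrix1} or \eqref{eqn:kerJ-matrix2}, with the $c$-block and the $d$-block carrying the same power-series coefficients and opposite signs. The rows are lower triangular because $h(x)^j$ starts at $x^{2j}$ and $g_{j-1}(x)$ starts at $x^{2j-1}$. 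The factor $2$ in the diagonal of \eqref{eqn:kerJ-matrix2} reflects $g_0(x)=2x+\dots$, and the halved last entry in \eqref{eqn:kerJ-matrix3} reflects $ph(\sigma)$ or $ph(\tau)$ being half of $ph(\mu_1\mu_0^{\lfloor n/2\rfloor})$ or $ph(\mu_3\mu_0^{\lfloor n/2\rfloor})$. Thus integral $\xi$ in the kernel correspond exactly to the first $r_l$ coordinates of integer solutions $(c,d)$.

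The main obstacle is the exceptional cases $(l,n)\equiv(0,1),(2,3)\mod 4$, where $V\neq\ker$ because of the order-two element. Here I would use Comment~\ref{kerJunderembedding}: restriction along $i_{n+1,n}$ maps $\ker[\Sigma^{2l}\cp^{n+1},J]$ onto $\ker[\Sigma^{2l}\cp^n,J]$, and $(l,n+1)$ is non-exceptional. So I would solve the system for $n+1$, restrict to the first coordinates, and discard the order-two basis element. This accounts for the extra zero column in \eqref{eqn:kerJ-matrix1}. I also need to check that the order-two generator contributes nothing to $ph$ and that the basis indexing matches $r_l$ in each residue class; this bookkeeping is the delicate part. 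The tables for $n=17,18$ and for $n=4$, $4\le l\le 15$, then follow by running the integer Smith/Hermite normal form computation on these explicit rational matrices.
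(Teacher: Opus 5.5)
Your argument is the same as the paper's: reduce to $V(\Sigma^{2l}\cp^n)$ via Lemma~\ref{kerJeven}, use Lemma~\ref{shlog} and the vanishing of cup products to get \eqref{eqn:simplified-equationkerJ}, then substitute Lemma~\ref{cherngener} and compare coefficients degree by degree. Your bookkeeping ($s=\lfloor l/2\rfloor+i$, the diagonal $2$ coming from $g_0(x)=2x+\dots$, the halving for $\sigma,\tau$) is more explicit than the paper's single worked case. Two small remarks.
\begin{itemize}
\item The residues $(l,n)\equiv(0,1),(2,3)\bmod 4$ are excluded from the statement; the paper treats them in a separate remark. There one must keep, not discard, the order-two coordinate of the restricted elements. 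Discarding it only returns the free projection of $V$, which is exactly what the system with the extra zero column already gives.
\item The $l=0$ row of Table~\ref{kerJgenertable} comes from the nonlinear equation \eqref{equationkerJ}, not from one of the linear systems.
\end{itemize}
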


\noindent\textbf{Proof: }We describe the proof for $l\equiv2\mod4$, $n$ even. The remaining cases follow analogously.
From Lemma~\ref{kerJeven} we see that we need to compute $V(\Sigma^{2l}\cp^n)$. We express $\xi,\beta\in\widetilde{KO}^0(\Sigma^{2l}\cp^n)$ as $$\xi=g_{\R}^t\cdot(x_1\mu_2+x_2\mu_2\mu_0+\dots+x_{r}\mu_2\mu_0^{r-1}),\hspace{5pt} \beta=g_{\R}^t\cdot(y_1\mu_2+y_2\mu_2\mu_0+\dots+y_{r}\mu_2\mu_0^{r-1})$$ where $x_i,y_i\in\Z$, $r=\frac{n}{2}$ and $t=\lfloor\frac{l}{4}\rfloor$. The cup products in $H^*(\Sigma^{2l}\cp^n;\Q)$ vanish, so~\eqref{equationkerJ} simplifies to 
\begin{equation} \label{eqn:simplified-equationkerJ}
    \sum_{s=1}^\infty\frac{\alpha_{2s}}{2}ph_{2s}(\xi)=ph(\beta).    
\end{equation}
Substituting $\xi$ and $\beta$ into \eqref{eqn:simplified-equationkerJ} we obtain the following system of $r$ linear equations in $x_1,\dots,x_r,y_1,\dots,y_r$ over $\Q$:
\begin{align*}
    \frac{\alpha_{2s}}{2}(x_1ph(\mu_2)_{2s}+\dots+x_rph(\mu_2\mu_0^{r-1})_{2s})&=y_1ph(\mu_2)_{2s}+\dots+y_rph(\mu_2\mu_0^{r-1})_{2s}, \\ 
    \lfloor\textstyle\frac{l}{2}\rfloor+1\leq &s\leq\lfloor\textstyle\frac{l}{2}\rfloor+r    
\end{align*}
After substituting the values of Pontryagin characters from Lemma~\ref{cherngener}, this equation gives us the system described by the matrix~\eqref{eqn:kerJ-matrix1}. The first $r$ coordinates $(x_1,...,x_r)$ of the integral solutions of ~\eqref{eqn:kerJ-matrix1} thus describe $\xi\in\ker[\Sigma^{2l}\cp^n,J]$.

The numerical values of the entries in the matrix, as well as the integral solutions of this system for $l$ and $n$ in the range mentioned in the lemma, were obtained algorithmically using the Python module SymPy.\qed
\begin{com}
    The cases where $(l,n)\equiv(0,1)$ or $(2,3)\mod4$ were omitted from the lemma. In these cases, the integral solutions of the system~\eqref{eqn:kerJ-matrix1} (with additional columns of zeros, as mentioned below the matrix) give us the subgroup $V(\Sigma^{2l}\cp^n)$. However, $\ker[\Sigma^{2l}\cp^n,J]$ is then only some subgroup of index $2$ of $V(\Sigma^{2l}\cp^n)$ (Lemma~\ref{kerJeven} fails). Still, the generators can be computed for these cases as restrictions of generators for higher $n$, as described in Remark~\ref{kerJunderembedding}.     
\end{com}
\begin{com}
\label{rem:complexity-k-is-0-gener-ker-J}
    Table~\ref{kerJgenertable} includes the generators of $\ker[\cp^{18},J]$ (the case when $l=0$). In that case, the generators are given by integral solutions of the equation \eqref{equationkerJ} which is not linear after substituting the coordinate forms of $\xi,\beta$ (the non-vanishing cup products induce non-linearity on the right side of the equation). Still, we were able to solve this polynomial system using SymPy, although the problem is computationally much more complex.    
\end{com}

\begin{com}[Coordinates of generators]
    The coordinates of any set of generators of integral solutions of the systems~\eqref{eqn:kerJ-matrix1}, \eqref{eqn:kerJ-matrix2} and \eqref{eqn:kerJ-matrix3} are of the form
    \begin{align*}
        &\left(0,...,\num (a_r),0,...,\denom(a_r)\right),...,\left(\num (a_1),b_{r,2},...,b_{r,r},\denom(a_1),c_{r,2},...,c_{r,r}\right)     
    \end{align*}
    where $b_{i,j},c_{i,j}\in\Z$. This follows inductively from the diagram 
    \begin{equation*}
        \begin{tikzcd}
            \widetilde{KO}^0(\Sigma^{2l}(\cp^{2m}/\cp^{2m-2})) \arrow{r}{\widetilde{KO}^0(j)} \arrow[twoheadrightarrow]{d} & \widetilde{KO}^0(\Sigma^{2l}\cp^{2m}) \arrow{r}{\widetilde{KO}^0(i)} \arrow[twoheadrightarrow]{d} & \widetilde{KO}^0(\Sigma^{2l}\cp^{2m-2}) \arrow{r}{} \arrow[twoheadrightarrow]{d} & 0 \\ 
            J(\Sigma^{2l}(\cp^{2m}/\cp^{2m-2})) \arrow{r}{\widetilde{KO}^0(j)|_{J(\dots)}} & J(\Sigma^{2l}\cp^{2m}) \arrow{r}{\widetilde{KO}^0(i)|_{J(\dots)}} & J(\Sigma^{2l}\cp^{2m-2}) \arrow{r}{} & 0 
        \end{tikzcd}
    \end{equation*}
    since $J(\Sigma^{2l}(\cp^{2m}/\cp^{2m-2}))\cong\Z_{\num(a_m)}$, which can be proved in a way analogous to \cite[4.10]{Adams_Walker_1965}. 
\end{com}
\begin{com}[Choice of generators]
\label{kerJgenerstandembed}
    There are many possible choices of generators of $\ker[\Sigma^{2l}\cp^n,J]$. A specific choice provides us with an isomorphism $\ker[\Sigma^{2l}\cp^n,J]\cong\Z^{t_{n,2l}^{'}}$. Together with a choice of some splitting in Lemma~\ref{suspensg/o} this gives the following:
    \[
        \Z^{t_{n,2l}^{'}}\xrightarrow{\cong}\ker[\Sigma^{2l}\cp^n,J]\hookrightarrow[\Sigma^{2l}\cp^n,G/O]
    \]
    For $m\leq n$, denote by $\Sigma^{2l}i_{m,n}:\Sigma^{2l}\cp^m\to\Sigma^{2l}\cp^n$ the suspension of the standard embedding. The map $[\Sigma^{2l}i_{m,n},BO]$ maps $\ker[\Sigma^{2l}\cp^n,J]$ onto $\ker[\Sigma^{2l}\cp^m,J]$. This follows from arguments similar to the one from Remark~\ref{kerJunderembedding}. In Section~\ref{nonsmooth} it will be important to make the choices of generators in the following way. 
    
    Suppose we have chosen the generators $\xi_{1},\dots,\xi_{t_{n-1,2l}^{'}}\in\ker[\Sigma^{2l},\cp^{n-1}]$. The first $t_{n-1,2l}^{'}$ generators of $\ker[\Sigma^{2l}\cp^n,J]$ are given by choosing an element in the preimage of $\xi_j$ under $[\Sigma^{2l}i_{n-1,n},BO]$ for $1\leq j\leq t_{n-1,2l}^{'}$. The remaining generator (when $t_{n-1,2l}^{'}<t_{n,2l}^{'}$) should map to zero under $[\Sigma^{2l}i_{n-1,n},BO]$. In other words, the choice should be such that the map $J_{m,n}$ in
    \[
        \begin{tikzcd}
            \Z^{t_{n,2l}^{'}} \arrow{r}{\cong} \arrow[dashed,twoheadrightarrow]{d}{J_{n,m}} & {\ker[\Sigma^{2l}\cp^n,J]} \arrow{r}{} \arrow[twoheadrightarrow]{d}{{[\Sigma^{2l}i_{m,n},BO]}} & {[\Sigma^{2l}\cp^n,G/O]} \arrow{d}{{[\Sigma^{2l}i_{m,n},G/O]}} \\
            \Z^{t_{m,2l}^{'}} \arrow{r}{\cong} & {\ker[\Sigma^{2l}\cp^m,J]} \arrow{r}{} & {[\Sigma^{2l}\cp^m,G/O]} 
        \end{tikzcd}
    \]
    is given by the $(t_{m,2l}^{'}\times t_{n,2l}^{'})$-matrix
    \[
    \begin{pNiceMatrix}[nullify-dots, columns-width=6pt]
        1      & 0      & & \Cdots[shorten=15pt] & 0 
        & 0      &       & & \Cdots[shorten=15pt] & 0 \\
        0      & \Ddots[shorten=20pt] & & & \Vdots[shorten=10pt] 
        &       & \Ddots[shorten=18pt] & & & \Vdots[shorten=10pt] \\
        \Vdots &        & &        &  
        & \Vdots &        & &        &  \\
           &        & &        & 0 
        &        &        & &        &  \\
        0      & \Cdots[shorten=5pt] & & 0 & 1 
        & 0      & \Cdots[shorten=5pt] & &  & 0
    \end{pNiceMatrix}
    \]
    
    In particular, for $0\leq l\leq3$ and $m<17$ or $18$, the generators can be obtained from Table \ref{kerJgenertable} as the images of generators for $n=17$ or $18$ under $\widetilde{KO}^0(\Sigma^{2l}i_{m,n})$.
\end{com}

%% file: obstruction.tex
\section{The Surgery Obstruction}
\label{obstrsect}
In this section, we determine the surgery obstruction maps 
\begin{align}
\label{surgobstrformulaeven}
    \textstyle\sigma_{n,2l}^{DIFF}:\sN_{\partial}^{DIFF}(\cpdt)&\to\Z \\ \nonumber
    [M,\partial M,f,\partial f]&\textstyle\mapsto\frac{1}{8}(\sign(M)-\sign(\cpdt)) 
\end{align}
for even $n+l$. In Section \ref{norminvsect} we computed the normal invariants in terms of Sullivan's isomorphism 
\begin{align}
\label{sullivaniso}
    \textstyle\sN_{\partial}^{DIFF}(\cpd)\cong[Th(\cpd),G/O]    
\end{align}
Elements in $[Th(\cpd),G/O]$ can be viewed as equivalence classes $[(\xi,t)]$, where $\xi$ is a vector bundle over $Th(\cpd)$ and $t:S(\xi)\to Th(\cpd)\times S^{N}$ is a trivialization of the associated spherical fibration. The isomorphism (\ref{sullivaniso}) sends this element to a normal bordism class represented by a normal map
\begin{equation}
\label{sect3normmap}
    \begin{tikzcd}
        \nu_{M}\oplus\underline{\R^b} \arrow{r}{\overline{f}} & \nu_{\cpd}\oplus\xi \\
        (M,\partial M) \arrow{u}{} \arrow{r}{(f,\partial f)} & (\cpd,\cp^n\times S^{k-1}) \arrow{u}{}
    \end{tikzcd}
\end{equation}
The construction of this normal map involves the Pontryagin-Thom construction, as well as a third alternate definition of the set of normal invariants. Note that the trivialization $t$ plays a role in this construction, it is possible for elements $[(\xi,t)]$ and $[(\xi,t')]$ to represent two different normal bordism classes. More details can be found in \cite[Chapter 7]{Lueck-Macko(2024)}.
\begin{lema}
\label{gluinglema}
    Gluing to $(M,\partial M,f,\partial f)$ a trivial normal map over $\cp^n\times D^k$ along their common boundary gives a map
    \begin{align*}
        \textstyle\text{ext}:\sN_{\partial}^{DIFF}(\cp^n\times D^k)&\textstyle\to\sN^{DIFF}(\cp^n\times S^k) \\
        [M,\partial M,f,\partial f]&\mapsto\textstyle[M\cup\cp^n\times D^k,f\cup\id]
    \end{align*}
    This map agrees with $[\pi,G/O]:[Th(\cpd),G/O]\to[\cp^n\times S^k,G/O]$ under the isomorphism \eqref{sullivaniso}, where $\pi:\cp^n\times S^k\to Th(\cpd)$ is the map that collapses $\cp^n\times\{\text{pt.}\}$ to a point.
\end{lema}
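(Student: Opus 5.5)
To prove Lemma~\ref{gluinglema}, I will first check that $\text{ext}$ is well defined on normal bordism classes, and then compare it with $[\pi,G/O]$ through the Pontryagin--Thom description of Sullivan's isomorphism \eqref{sullivaniso}.

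\emph{Well-definedness.} Suppose $W$ is a normal bordism rel boundary between $(M,\partial M,f,\partial f)$ and $(M',\partial M',f',\partial f')$. Since both boundary maps are diffeomorphisms onto $\cp^n\times S^{k-1}$ and the bordism is rel boundary, the boundary part of $W$ is a product. I glue $W$ to the trivial bordism $(\cp^n\times D^k)\times[0,1]$ along it. The result is a normal bordism between $M\cup\cp^n\times D^k$ and $M'\cup\cp^n\times D^k$ over $\cp^n\times S^k$. The bundle data glue as well, because over $\cp^n\times S^{k-1}$ the bundle map of a normal map that restricts to a diffeomorphism is the canonical identification of normal bundles. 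The glued map $f\cup\id$ still has degree one, since $f$ has degree one rel boundary.

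\emph{Comparison with $[\pi,G/O]$.} Take a class $[(\xi,t)]\in[Th(\cpd),G/O]$, where $\xi$ lives over $Th(\cpd)=\cpd/\cp^n\times S^{k-1}$ and $t$ is a fibre homotopy trivialization of its sphere bundle. Pull it back along the quotient map $q\colon\cpd\to Th(\cpd)$. This gives a bundle over the pair $(\cpd,\cp^n\times S^{k-1})$ with a trivialization on the boundary. The normal map \eqref{sect3normmap} is obtained by making the Thom--Pontryagin collapse of the trivialized sphere bundle transverse to the zero section of $\nu_{\cpd}\oplus\xi$. Over the boundary, $\xi$ is canonically trivial with $t$ the identity. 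So the transversality step can be done rel boundary, and there the preimage is just the identity of $\cp^n\times S^{k-1}$.

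Now write $\cp^n\times S^k=\cpd\cup_{\cp^n\times S^{k-1}}\cp^n\times D^k$. The composite $\cp^n\times S^k\xrightarrow{\pi}Th(\cpd)$ restricts to $q$ on the first piece and collapses the second piece $\cp^n\times D^k$ to the basepoint (up to homotopy, after choosing $\pi$ to collapse a collar neighbourhood of $\cp^n\times\{\text{pt}\}$, which is homotopic to the stated map). Hence $\pi^*(\xi,t)$ equals $q^*(\xi,t)$ on $\cpd$, and is the trivial bundle with trivial trivialization on the complementary disc piece.

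Since the Pontryagin--Thom construction is local, I can apply it separately on the two pieces, rel the common boundary. On $\cpd$ it yields $(M,f)$. On $\cp^n\times D^k$ the data are trivial, so the transverse preimage is $\cp^n\times D^k$ with the identity map. The normal map associated to $\pi^*(\xi,t)$ is therefore $(M\cup\cp^n\times D^k,f\cup\id)$, which is $\text{ext}$ of the class.

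The main obstacle is the naturality of Sullivan's isomorphism with respect to $\pi$ and to restriction to pieces. One has to check that the bijection $\sN\cong[\,\cdot\,,G/O]$ of \cite[Ch.~7]{Lueck-Macko(2024)} commutes with pullback along maps that are identities on parts of the manifold, including the boundary conventions for $Th(\cpd)$. I would handle this by working with a reference normal map for the identity over $\cp^n\times S^k$ and using that the whole construction can be performed relative to a codimension-zero submanifold on which the data are trivial.
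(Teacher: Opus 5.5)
Your proposal is correct and follows essentially the same route as the paper. For well-definedness you glue a product normal bordism over $\cp^n\times D^k$. For the identification with $[\pi,G/O]$ you write $\cp^n\times S^k$ as two copies of $\cp^n\times D^k$ and use that $\pi$ is homotopic to the map collapsing the second copy, so the glued bundle data is $\xi$ extended trivially. Your appeal to the relative (local) Pontryagin--Thom construction to see that the resulting fibre homotopy trivialization really is $\pi^*t$ is more explicit than the paper, which only asserts that the glued normal map corresponds to $\xi\cup\underline{\R}^c$ with some trivialization $t'$.
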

\noindent\textbf{Proof: } The map $f:M\to\cpd$ underlying the normal map $(M,\partial M,f,\partial f)$ restricts to a diffeomorphism on the boundary, we can therefore glue the normal map $(\cpd,\cp^n\times S^{k-1},\id,\id)$ along the boundary to obtain the normal map
\begin{equation}
\label{sect3normmapclosed}
    \begin{tikzcd}
        \nu_{M\cup\cpd}\oplus\underline{\R^a} \arrow{r}{\overline{f\cup\id}} \arrow{d}{} & \nu_{\cp^n\times S^{k}}\oplus(\xi\cup\underline{\R^c}) \arrow{d}{} \\
        M\cup_{\partial M}\cpd  \arrow{r}{f\cup\id} & \cp^n\times S^{k}
    \end{tikzcd}
\end{equation}
where $a,c\in\N$, $[\xi,t]\in[Th(\cpd),G/O]$ corresponds to $[M,\partial M,f,\partial f]$ under \eqref{sullivaniso}, so the resulting normal map $[M\cup\cpd,f\cup\id]$ corresponds to $[\xi\cup\underline{\R}^c,t']$ for some trivialization $$\textstyle t':S(\xi\cup\underline{\R}^c)\xrightarrow{\simeq}(\cp^n\times S^k)\times S^c.$$ This construction respects normal bordism classes, since we can glue a trivial normal bordism to the bordism between two representants $(M,\partial M,f,\partial f)$ and $(M',\partial M',f',\partial f')$. 

The map $\pi$ is homotopic to the map that collapses $\cpd_+$ to a point, the image of $[\pi,G/O]$ is therefore given by $[\xi\cup\underline{\R}^c,t']$. \qed
\begin{com}
\label{rem:splitting-cp-times-S-thom-space}
    With respect to the following splittings
    \begin{equation*}
        \begin{tikzcd}
            {[Th(\cpd),Y]} \arrow{r}{[\pi,Y]} & {[\cp^n\times S^k,Y]} \\
            {[\Sigma^k\cp^n,Y]}\oplus\pi_k(Y) \arrow[u,phantom,sloped,"\cong"] & {[\Sigma^k\cp^n,Y]\oplus\pi_k(Y)\oplus[\cp^n,Y]} \arrow[u,phantom,sloped,"\cong"]
        \end{tikzcd}
    \end{equation*}
    the map $[\pi,Y]$ is given by the inclusion of the first two summands for $Y=G/O,BO$. In particular, the map ext from Lemma \ref{gluinglema} is a monomorphism.
\end{com}
The result can be summarized by the following diagram:
\begin{equation}
\label{normalmapdiag}
    \begin{tikzcd}
        {[M,\partial M,f,\partial f]} \arrow[mapsto]{r} \arrow[mapsto,bend right=70]{ddd} \arrow[d,phantom,sloped,"\in"] & {[\xi,t]} \arrow[mapsto]{r} \arrow[mapsto,bend right=70]{ddd} \arrow[d,phantom,sloped,"\in"] & {[\xi]} \arrow[mapsto,bend left=85]{ddd} \arrow[d,phantom,sloped,"\in"] \\
        \sN_{\partial}^{DIFF}(\cpd) \arrow{d}{\text{ext}} \arrow{r}{\cong} & {[Th(\cpd),G/O]} \arrow{d}{[\pi,G/O]} \arrow{r}{[Th(\cdot),r]} & {[Th(\cpd),BO]} \arrow{d}{[\pi,BO]} \\
        \sN^{DIFF}(\cp^n\times S^k) \arrow{r}{\cong} & {[\cp^n\times S^k,G/O]} \arrow{r}{[\cp^n\times S^k,r]} & {[\cp^n\times S^k,BO]} \\
        {[M\cup\cpd,f\cup\id]} \arrow[mapsto]{r} \arrow[u,phantom,sloped,"\in"] & {[\xi\cup\underline{\R}^a,t']} \arrow[mapsto]{r} \arrow[u,phantom,sloped,"\in"] & {[\xi\cup\underline{\R}^a]} \arrow[u,phantom,sloped,"\in"]
    \end{tikzcd}
\end{equation}
\begin{lema} \label{lem:surgery-obstruction-signature-formula}
    For even $n+l$ the composition $$\textstyle[Th(\cp^n\times D^{2l}),G/O]\xrightarrow{\cong}\sN_{\partial}^{DIFF}(\cp^n\times D^{2l})\xrightarrow{\sigma_{n,2l}^{DIFF}}L_{2(n+l)}(\Z)\cong\Z$$is given by the formula 
    \begin{align}
    \label{eqn:obstr-formula-Hirz-simplified}
        \textstyle\sigma_{n,2l}^{DIFF}([\xi,t])=\frac{1}{8}\langle H^*(\pi)(\sL^{-1}(\xi)-1)H^*(\proj_1)
    \sL(\cp^n), [\cp^n\times S^{2l}]\rangle
    \end{align}
    where $\proj_1:\cp^n\times S^{2l}\to\cp^n$ is the projection on the first factor. In particular, the surgery obstruction map depends only on the image of  $$[Th(\cp^n\times D^{2l}),G/O]\xrightarrow{[Th(\cp^n\times D^{2l}),r]}[Th(\cp^n\times D^{2l}),BO]\xrightarrow{[\pi,BO]}[\cp^n\times S^{2l},BO].$$
\end{lema}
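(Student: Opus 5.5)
Our plan is to reduce to a closed manifold via Lemma~\ref{gluinglema}, and then apply the Hirzebruch signature theorem together with Novikov additivity. Let $[\xi,t]$ correspond to the degree one normal map $(M,\partial M,f,\partial f)$, and write $\widehat M=M\cup_{\partial M}\cp^n\times D^{2l}$ for the glued manifold. Since $f|_{\partial M}$ is a diffeomorphism onto $\cp^n\times S^{2l-1}$, Novikov additivity gives
\[
\sign(\widehat M)=\sign(M)+\sign(\cp^n\times D^{2l}) \quad\text{and}\quad \sign(\cp^n\times S^{2l})=2\,\sign(\cp^n\times D^{2l}).
\]
Moreover, $\sign(\cp^n\times S^{2l})=0$, because $\sL(S^{2l})=1$ and $\sL(\cp^n\times S^{2l})$ is pulled back along $\proj_1$ from a class of degree at most $2n$. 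Together these give $\sign(\cp^n\times D^{2l})=0$ and
\[
\sign(M)-\sign(\cp^n\times D^{2l})=\sign(\widehat M)-\sign(\cp^n\times S^{2l}).
\]
So it suffices to compute the signature difference of the closed normal map $f\cup\id$. By Lemma~\ref{gluinglema} and diagram~\eqref{normalmapdiag}, that map is classified by $[\pi,G/O]([\xi,t])$, whose underlying bundle is $H^*(\pi)$-pulled back, i.e.\ $\xi\cup\underline{\R}^a$.

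Next we compute $\sign(\widehat M)$ by Hirzebruch. The bundle data $\overline{f\cup\id}$ give $\nu_{\widehat M}\oplus\underline{\R}^a\cong (f\cup\id)^*(\nu_{\cp^n\times S^{2l}}\oplus\hat\xi)$, where $\hat\xi=\xi\cup\underline{\R}^a$. Since $\sL$ is multiplicative and $\sL(\tau)=\sL(\nu)^{-1}$, this yields
\[
\sL(\widehat M)=(f\cup\id)^*\bigl(\sL(\cp^n\times S^{2l})\,\sL(\hat\xi)^{-1}\bigr).
\]
Because $f\cup\id$ has degree one, we get
\[
\sign(\widehat M)=\langle \sL(\hat\xi)^{-1}\sL(\cp^n\times S^{2l}),[\cp^n\times S^{2l}]\rangle.
\]
Subtracting $\sign(\cp^n\times S^{2l})=\langle\sL(\cp^n\times S^{2l}),[\,\cdot\,]\rangle$ gives
\[
\bigl\langle(\sL^{-1}(\hat\xi)-1)\,\sL(\cp^n\times S^{2l}),[\cp^n\times S^{2l}]\bigr\rangle,
\]
and here $\sL^{-1}(\hat\xi)=H^*(\pi)\sL^{-1}(\xi)$ by naturality. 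Since the trivial summand does not change $\sL$, the two factors combine as stated.

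Finally, we replace $\sL(\cp^n\times S^{2l})$ by $H^*(\proj_1)\sL(\cp^n)$, using that $S^{2l}$ is stably parallelizable. Dividing by $8$ gives \eqref{eqn:obstr-formula-Hirz-simplified}, where \eqref{surgobstrformulaeven} identifies $\sigma^{DIFF}_{n,2l}$ with one eighth of the signature difference. The last assertion is then immediate: the formula involves only $\sL$ of the stable bundle $H^*(\pi)\xi$, i.e.\ the image under $[\pi,BO]\circ[Th,r]$, and not the trivialization $t$.

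The main obstacle is the sign and identification bookkeeping in the first step. We must check carefully that gluing by the identity on the $\cp^n\times D^{2l}$ side produces the correct orientation on $\widehat M$, so that Novikov additivity applies with the signs above. We must also check that the stable bundle on the glued target really is the pullback $H^*(\pi)$ of $\xi$; this follows from Lemma~\ref{gluinglema} and Remark~\ref{rem:splitting-cp-times-S-thom-space}.
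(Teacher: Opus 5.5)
Your proposal is correct and follows essentially the same route as the paper. Both glue on the trivial normal map via Lemma~\ref{gluinglema}, apply the Hirzebruch signature theorem to the closed normal map over $\cp^n\times S^{2l}$, use the degree-one property, and use $\sL(S^{2l})=1$. The only difference is in justifying $\sigma_{n,2l}^{DIFF}(f,\partial f)=\sigma_{n,2l}^{DIFF}(f\cup\id)$. You do it by Novikov additivity together with the vanishing of the signatures of $\cp^n\times D^{2l}$ and $\cp^n\times S^{2l}$, whereas the paper cites additivity of the surgery obstruction \cite[II.1.4]{browder}. Since the signature of $\cp^n\times D^{2l}$ is zero for either orientation, the orientation bookkeeping you flag as the main obstacle is harmless.
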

\noindent\textbf{Proof: }Consider the normal map $[M\cup\cpdt,f\cup\id]\in\sN_{\partial}^{DIFF}(\cpdt)$. By Lemma \ref{gluinglema} it corresponds to $[\pi,G/O]([\xi,t])\in[\cp^n\times S^{2l},G/O]$. The additivity of the surgery obstruction \cite[II.1.4]{browder} gives $$\sigma_{n,2l}^{DIFF}(f\cup\id)=\sigma_{n,2l}^{DIFF}(f,\partial f)+\sigma_{n,2l}^{DIFF}(\id,\id)=\sigma_{n,2l}^{DIFF}(f,\partial f)=\sigma_{n,2l}^{DIFF}([\xi,t]).$$ There is a slight abuse of notation here, we denote by $\sigma_{n,2l}^{DIFF}$ the surgery obstruction with source $\sN_{\partial}^{DIFF}(\cpdt)$, as well as the one with source $\sN^{DIFF}(\cp^n\times S^{2l})$.

In this way, we converted the original "relative boundary" problem to a problem over a closed manifold. A formula analogous to (\ref{surgobstrformulaeven}) holds for $\cp^n\times S^{2l}$. Using Hirzebruch's Signature Theorem we get 
\begin{align}
    \nonumber
    \sigma_{n,2l}^{DIFF}(f\cup\id)&=\textstyle\frac{1}{8}(\sign(M\cup\cpdt)-\sign(\cp^n\times S^{2l})) \\
\label{Hirzformula2}
    &=\textstyle\frac{1}{8}(\langle\sL(\tau_{M\cup\cpdt}),[M\cup\cpdt]\rangle \\ \nonumber
    &-\langle\sL(\tau_{\cp^n\times S^{2l}}),[\cp^n\times S^{2l}]\rangle)
\end{align}
From the normal map (\ref{sect3normmapclosed}), we see that 
\begin{align*}
    \textstyle\sL(\tau_{M\cup\cpdt})&=\sL^{-1}(\nu_{M\cup\cpdt}) \\
    &=\textstyle H^*(f\cup\id)\sL^{-1}(\nu_{\cp^n\times S^{2l}}\oplus(\xi\cup\underline{\R^c})) \\
    &=\textstyle H^*(f\cup\id)\sL^{-1}(\nu_{\cp^n\times S^{2l}})H^*(f\cup\id)\sL^{-1}(\xi\cup\underline{\R^c}) \\
    &=\textstyle H^*(f\cup\id)\sL(\tau_{\cp^n\times S^{2l}})H^*(f\cup\id)\sL^{-1}(\xi\cup\underline{\R^c}) \\
    &=\textstyle H^*(f\cup\id)(\sL(\tau_{\cp^n\times S^{2l}})\sL^{-1}(\xi\cup\underline{\R^c}))
\end{align*}
and therefore 
\begin{align*}
    \textstyle\sigma_{n,2l}^{DIFF}(f\cup\id)&=\textstyle\frac{1}{8}(\langle H^*(f\cup\id)(\sL(\tau_{\cp^n\times S^{2l}})\sL^{-1}(\xi\cup\underline{\R^c})),[M\cup\cpdt]\rangle- \\
    &-\textstyle\langle\sL(\tau_{\cp^n\times S^{2l}}), [\cp^n\times S^{2l}]\rangle) \\
    &=\textstyle\frac{1}{8}(\langle\sL(\tau_{\cp^n\times S^{2l}})\sL^{-1}(\xi\cup\underline{\R^c}),H_*(f\cup\id)[M\cup\cpdt]\rangle- \\
    &-\textstyle\langle\sL(\tau_{\cp^n\times S^{2l}}), [\cp^n\times S^{2l}]\rangle) \\
    &=\textstyle\frac{1}{8}\langle(\sL^{-1}(\xi\cup\underline{\R^c})-1)\sL(\tau_{\cp^n\times S^{2l}}), [\cp^n\times S^{2l}]\rangle
\end{align*}
Since $[\tau_{\cp^n\times S^{2l}}]\cong[\proj_1,BO]([\tau_{\cp^n}])\oplus[\proj_2,BO]([\tau_{S^{2l}}])$, the class $\sL(\tau_{S^{2l}})=1$, and $[\xi\cup\underline{\R^c}]=[\pi,BO]([\xi])$, we obtain the desired formula. \qed
\\

Next, we prepare some notation for Lemma~\ref{lema:obstr-formula-simplified}. Recall that by Corollary~\ref{cor:norm-inv-splitting}:
\begin{align}
\label{eqn:norm-inv-specific-iso}
    [Th(\cpdt),G/O]&\cong\ker[\Sigma^{2l}\cp^n,J]\oplus \coker[\Sigma^{2l}\cp^n,\Omega J]\oplus\pi_{2l}(G/O) \\ \nonumber
    &\cong\Z^{t_{n,2l}^{'}+\epsilon_{2l}}\oplus T_{n,2l}^{'}
\end{align}
where $\epsilon_{2l}=1$ if $l$ is even and zero otherwise. Here $\ker[\Sigma^{2l}\cp^n,J]$ is a subgroup of $\widetilde{KO}^0(\Sigma^{2l}\cp^n)$ and its isomorphism with $\Z^{t_{n,2l}^{'}}$ is given by choosing a specific set of generators $\xi_1,\dots,\xi_{t_{n,2l}^{'}}$ of solutions of systems~\eqref{eqn:kerJ-matrix1}, \eqref{eqn:kerJ-matrix2} and ~\eqref{eqn:kerJ-matrix3}; see Lemma~\ref{generkerJlema}. Denote by
\begin{align}
\label{eqn:any-choice-of-generators-ker-J}
    (m_{1,i},\dots,m_{t_{n,2l}^{'},i})
\end{align}
the coordinates of $\xi_i$ in $\widetilde{KO}^0(\Sigma^{2l}\cp^n)\cong\Z^{t_{n,2l}^{'}}\oplus\text{torsion}$. 

In cases where $l$ is even, the additional $\Z$-summand comes from $\pi_{2l}(G/O)$. We denote its generator by $\xi_{S^{2l}}$. Note that, as a consequence of the last statement in Lemma~\ref{lem:surgery-obstruction-signature-formula}, $\sigma_{n,l}^{DIFF}([\xi,t])$ is independent of the choice of a particular splitting in Lemma~\ref{suspensg/o}.

In the following result, the Bernoulli numbers $B_i$ play a prominent role. We use the definition of $B_i$ from \cite[Appendix B]{milnor1974characteristic}. We define for $1\leq i\leq t_{n,2l}^{'}$:
\begin{align}
\label{eqn:coeff-a_n,l}
    a_{n,l}^i&=\frac{1}{8}\sum_{j>0}^{\frac{n+l}{2}}(-1)^j2^{2j}(2^{2j-1}-1)\frac{|B_j|}{2j}\left(\sum_{k=1}^{t_{n,2l}^{'}} m_{k,i}(z^{\times2l}\times h(x)^k)_{2j}\right)\times \\ \nonumber
    &\times\left[\left(\frac{x}{tanh(x)}\right)^{n+1}\right]_{2(\frac{n+l}{2}-j)} \\
\label{eqn:coeff-b_n,l}
    b_{n,l}^i&=\frac{1}{8}\sum_{j>0}^{\frac{n+l}{2}}(-1)^j2^{2j}(2^{2j-1}-1)\frac{|B_j|}{2j}\times \\ \nonumber
    &\times\left(\sum_{k=1}^{t_{n,2l}^{'}}m_{k,i} \gamma_{n,l}^{k}(z^{\times2l}\times g_{k-1}(x))_{2j}\right)\left[\left(\frac{x}{tanh(x)}\right)^{n+1}\right]_{2(\frac{n+l}{2}-j)}
\end{align}
where 
\[
    \gamma_{n,l}^k=
    \begin{cases}
        \frac{1}{2} & \text{if }k=t_{n,2l}^{'}\text{ and }(l,n)\equiv(1,3),(3,1)\mod4, \\
        1 & \text{otherwise},
    \end{cases}
\]
and
\begin{align}
\label{eqn:coeff-a_n,l-sphere}
    a_{n,l}^{0}=\frac{1}{8}(-1)^{\frac{l}{2}}\frac{3-(-1)^l}{2}2^{l-2}(2^{l-1}-1)\num\left(\frac{B_{\frac{l}{2}}}{2l}\right)\left[\left(\frac{x}{tanh(x)}\right)^{n+1}\right]_{n}.
\end{align}
Here, $f(x)_{2i}$ denotes the coefficient of $x^{2i}$ in the power series expansion of $f(x)$.
\begin{lema}
\label{lema:obstr-formula-simplified}
    Suppose that, with respect to \eqref{eqn:norm-inv-specific-iso}, $[\xi,t]\in[Th(\cpdt),G/O]$ has integral coordinates $(s_1,\dots,s_{t_{n,2l}^{'}})$ if $l$ is odd and $(y,s_1,\dots,s_{t_{n,2l}^{'}})$ if $l$ is even, where the coordinate $y$ corresponds to $\xi_{S^{2l}}$. Then 
    \begin{align}
    \label{eqn:obstr-final-formula-even-l}
        \sigma_{n,2l}^{DIFF}([\xi,t])=
        \begin{cases}
            -ya_{n,l}^{0}+\sum_{i=1}^{t_{n,2l}^{'}}s_i a_{n,l}^{i} & \text{if $l$ is even} \\
            \sum_{i=1}^{t_{n,2l}^{'}}s_ib_{n,l}^i & \text{if $l$ is odd}
        \end{cases}
    \end{align}
\end{lema}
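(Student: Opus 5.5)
The starting point is Lemma~\ref{lem:surgery-obstruction-signature-formula}. It says that $\sigma_{n,2l}^{DIFF}([\xi,t])$ depends only on the stable bundle $[\pi,BO]([\xi])\in\widetilde{KO}^0(\cp^n\times S^{2l})$. It also shows that $\sigma_{n,2l}^{DIFF}$ is a homomorphism. So it suffices to evaluate formula \eqref{eqn:obstr-formula-Hirz-simplified} on the generators $\xi_1,\dots,\xi_{t_{n,2l}'}$ and, for even $l$, on $\xi_{S^{2l}}$. By Remark~\ref{rem:splitting-cp-times-S-thom-space}, $[\pi,BO]$ is the inclusion of the summands $\widetilde{KO}^0(\Sigma^{2l}\cp^n)\oplus\widetilde{KO}^0(S^{2l})$. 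Hence $H^*(\pi)$ sends characteristic classes of $\xi_i$ to classes of the form $z^{\times 2l}\times(\text{poly in }x)$ in $H^*(\cp^n\times S^{2l};\Q)$, and similarly for $\xi_{S^{2l}}$ with classes $z^{\times 2l}\times 1$. The torsion part contributes nothing, since its rational classes vanish.

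The key computation is $\sL^{-1}(\xi)-1$ when all cup products of positive-degree classes coming from $\Sigma^{2l}\cp^n$ vanish. In that situation $\sL^{-1}$, like $\log\sL$, is additive and linear in the Pontryagin character. Concretely, write $\sL$ as the multiplicative sequence for $x/\tanh x$. Then $\log\sL(\xi)=\sum_j c_j\,ph_{2j}(\xi)$, with coefficients read off from $\log(x/\tanh x)=\sum_j (-1)^{j-1}2^{2j}\frac{|B_j|}{2j}\frac{x^{2j}}{(2j)!}$ (up to normalization). This is the analogue of Lemma~\ref{shlog}, proved the same way by splitting principle. Because products vanish, $\sL^{-1}(\xi)-1=-\log\sL(\xi)=\sum_j(-1)^j2^{2j}\frac{|B_j|}{2j}\,ph_{2j}(\xi)\cdot(\text{normalizing factor})$. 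I expect the factor $(2^{2j-1}-1)$ to appear here after adjusting conventions (e.g.\ using $x/\tanh x$ versus $\tfrac{x/2}{\tanh(x/2)}$-type forms). Pinning down this constant exactly, consistently with Milnor's $B_j$ conventions and the $ph$ normalization from Lemma~\ref{cherngener}, is the main obstacle. I would check it against the known values $\sL_1=p_1/3$ and $\sL_2$.

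Next, I would substitute $ph(\xi_i)=\sum_k m_{k,i}\,ph(\text{basis}_k)$ using Lemma~\ref{cherngener}. This gives $z^{\times2l}\times h(x)^k$ for even $l$ and $z^{\times2l}\times g_{k-1}(x)$ for odd $l$. The factor $\gamma^k_{n,l}=\tfrac12$ accounts for the generators $\sigma,\tau$, whose $ph$ is half that of $\mu_1\mu_0^{\lfloor n/2\rfloor}$ or $\mu_3\mu_0^{\lfloor n/2\rfloor}$. Then I would multiply by $H^*(\proj_1)\sL(\cp^n)=(x/\tanh x)^{n+1}$ restricted to degrees $\le 2n$, and pair with $[\cp^n\times S^{2l}]$. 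This picks out the coefficient of $z^{\times 2l}\times x^n$, i.e. degree-$2j$ part from $\xi$ times the $x^{n+l-2j}$-coefficient of $(x/\tanh x)^{n+1}$, with $j$ ranging up to $\frac{n+l}{2}$. This yields $a_{n,l}^i$ and $b_{n,l}^i$.

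Finally, for even $l$, the sphere generator $\xi_{S^{2l}}$ has $ph$ equal to $\pm$ (generator of $\pi_{2l}(G/O)$ viewed in $\widetilde{KO}(S^{2l})$) times $z^{\times2l}$. The image of $\pi_{2l}(G/O)\to\pi_{2l}(BO)$ has index the denominator order of the image of $J$, $\num(B_{l/2}/2l)$, by Adams. I would also use that the integral generator of $\widetilde{KO}(S^{2l})$ has $ph$ equal to $(3-(-1)^{l/2})/2\cdot z^{\times 2l}$, times a factorial normalization, depending on $l\bmod 8$. Combining these with the degree-$l$ coefficient of $\log\sL$ and pairing with $(x/\tanh x)^{n+1}$ in degree $x^n$ gives $a^0_{n,l}$; the sign is absorbed into the $-y$. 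Linearity then gives \eqref{eqn:obstr-final-formula-even-l}.
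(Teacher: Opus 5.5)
Your skeleton (linearity, vanishing cup products on the Thom space, substituting Lemma~\ref{cherngener}, pairing with $(x/\tanh x)^{n+1}$) is the paper's. Linearizing $\log\sL$ in $ph$ as in Lemma~\ref{shlog} is a legitimate alternative to the paper's key step. The paper instead converts $ph$ into Pontryagin classes via $p_j(\xi)=(-1)^{j+1}(2j-1)!\,ph_{2j}(\xi)$, quotes Milnor's Problem~19-C for the coefficient $2^{2j}(2^{2j-1}-1)B_j/(2j)!$ of $p_j$ in $\sL_j$, and uses Levine for the sphere generator. Your route avoids Newton's identities and 19-C, at the cost of expanding $\log(x/\tanh x)$ yourself.

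However, the lemma is precisely these constants, and you do not establish them. The series you quote is exactly $\log\frac{\sinh x}{x}$, not $\log\frac{x}{\tanh x}$. The discrepancy is not a convention issue, since rescaling $x$ only produces powers of $2$. From $\frac{x}{\tanh x}=\frac{\sinh 2x}{2x}\bigl(\frac{x}{\sinh x}\bigr)^2$, each $x^{2j}$-coefficient gets multiplied by $2^{2j}-2=2(2^{2j-1}-1)$. So, with Milnor's $B_j>0$,
\[
\log\frac{x}{\tanh x}=\sum_{j\geq1}(-1)^{j+1}\frac{2^{2j}(2^{2j-1}-1)B_j}{j\,(2j)!}\,x^{2j}=\frac{x^2}{3}-\frac{7x^4}{90}+\cdots .
\]
Since $\sum_i x_i^{2j}=\frac{(2j)!}{2}\,ph_{2j}(\xi)$ for Pontryagin roots $x_i^2$, this gives $\sL^{-1}(\xi)-1=-\log\sL(\xi)=\sum_j(-1)^j2^{2j}(2^{2j-1}-1)\frac{B_j}{2j}\,ph_{2j}(\xi)$. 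That is exactly the paper's expression, and with it your route closes for the $\xi_i$. For example, $\xi_1=240\mu_2$ over $\Sigma^4\cp^2$ gives $\frac18\cdot16\cdot7\cdot\frac{1}{120}\cdot240=28$, as in Table~\ref{signobstrtable2}.

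The sphere term contains two genuine errors. First, the index of $\ker(\pi_{2l}(BO)\to\pi_{2l}(BG))$ is the order of the image of $J$ in $\pi^s_{2l-1}$. By Adams with Quillen this is $\denom(B_{l/2}/2l)$, not $\num(B_{l/2}/2l)$, so $\xi_{S^{2l}}=\denom(B_{l/2}/2l)\,\eta_{2l}$. Second, $ph(\eta_{2l})=\pm\frac{3-(-1)^{l/2}}{2}\,z^{\times2l}$ with no factorial. This is because $c(\eta_{2l})$ is $1$ or $2$ times the Bott class, whose Chern character is an integral generator; the $(l-1)!$ belongs to $p_{l/2}$, not to $ph$.

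The $\num(B_{l/2}/2l)$ in $a^0_{n,l}$ only emerges after applying the $j=l/2$ coefficient above, through $\frac{B_{l/2}}{l}\denom\bigl(\frac{B_{l/2}}{2l}\bigr)=2\num\bigl(\frac{B_{l/2}}{2l}\bigr)$. With your index, or with a factorial in $ph$, the constant comes out wrong. Carried through correctly, you get $a^0_{n,l}=(-1)^{l/2}\frac{3-(-1)^{l/2}}{2}\,2^{l-2}(2^{l-1}-1)\num\bigl(\frac{B_{l/2}}{2l}\bigr)\bigl[(\frac{x}{\tanh x})^{n+1}\bigr]_n$. For $l=2$: $\xi_{S^4}=24\eta_4$, $p_1=\pm48$, $\sL^{-1}-1=\mp16$, and the obstruction is $\pm2y$, matching Table~\ref{signobstrtable2}. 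Note that the displayed \eqref{eqn:coeff-a_n,l-sphere} carries a stray $\frac18$ and has $\frac{3-(-1)^l}{2}$ where $\frac{3-(-1)^{l/2}}{2}$ is meant.
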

\noindent\textbf{Proof: }The lemma is proved using the formula \eqref{eqn:obstr-formula-Hirz-simplified}. We prove the case where $n$ is even and $l\equiv0\mod4$, the remaining cases follow analogously. 

In this case, $\sN_{\partial}^{DIFF}(\cpdt)_{\text{free}}$ is isomorphic to $\ker\pi_{2l}(J)\oplus\ker[\Sigma^{2l}\cp^n,J]$. As above, we denote the generators by $\xi_{S^{2l}},\xi_1,\dots,\xi_{t_{n,2l}^{'}}$, where
\begin{align*}
    \xi_{S^{2l}}&=\denom\left(\frac{B_{\frac{l}{2}}}{2l}\right)\eta_{2l}, \\
    \xi_i&=m_{1,i}g_{\R}^{\lfloor\frac{l}{4}\rfloor}\mu_2+\dots+m_{t_{n,2l}^{'},i}g_{\R}^{\lfloor\frac{l}{4}\rfloor}\mu_2\mu_0^{t_{n,2l}^{'}-1},\hspace{3pt}1\leq i\leq t_{n,2l}^{'}, 
\end{align*}
$\eta_{2l}$ being the generator of $\pi_{2l}(BO)\cong\Z$. The result for $\xi_{S^{2l}}$ follows from \cite[Theorem 3.7]{Adams} in conjunction with \cite[Theorem 1.1]{quillen1971adams} (which solves the ambiguity for $2l\equiv0\mod8$). 

This, together with Lemma~\ref{cherngener}, implies that 
\begin{align*}
    ph(\xi_i)&=m_{1,i}z^{\times2l}\times h(x)+\dots+m_{t_{n,2l}^{'},i}z^{\times2l}\times h(x)^{t_{n,2l}^{'}}
\end{align*}
The cup products over suspension spaces vanish and the odd-degree Chern classes of $\xi_i\otimes\C$ are zero. Furthermore, $p_j(\xi_i)=(-1)^jc_{2j}(\xi_i\otimes\C)$, so we get from the definition of the Chern character \cite[3.22]{karoubi} that
\[
ph(\xi_i)=\sum_{j\geq1}(-1)^{j+1}\frac{p_j(\xi_i)}{(2j-1)!},
\]
which implies that $p_j(\xi_i)=(-1)^{j+1}(2j-1)!ph(\xi_i)_{2j}$. Additionally, from \cite[Theorem 3.8]{levine} we get the following:
\[
    p_{\frac{l}{2}}(\xi_{S^{2l}})=(-1)^{\frac{l}{2}}\frac{3-(-1)^l}{2}(l-1)!\denom\left(\frac{B_{\frac{l}{2}}}{2l}\right),\hspace{5pt} p_j(\xi_{S^{2l}})=0\text{ for }j\neq\frac{l}{2}.
\]

Again, using the triviality of cup products, we get from \cite[19-C]{milnor1974characteristic} that
\begin{align*}
    \sL^{-1}(\xi_i)&=1-\sum_{j\geq1}2^{2j}(2^{2j-1}-1)\frac{B_j}{2j!}p_j(\xi_i) \\ 
    \sL^{-1}(\xi_{S^{2l}})&=1-2^{l}(2^{l-1}-1)\frac{B_{\frac{l}{2}}}{l!}p_{\frac{l}{2}}(\xi_{S^{2l}})
\end{align*}
and substituting for $p_{\frac{l}{2}}(\xi_{S^{2l}})$, $p_j(\xi_i)$ and $ph(\xi_i)$ we obtain
\begin{align*}
    \sL^{-1}(\xi_i)&=1+\sum_{j\geq1}(-1)^j2^{2j}(2^{2j-1}-1)\frac{B_j}{2j}\left(\sum_{k=1}^{t_{n,2l}^{'}} m_{k,i}(z^{\times2l}\times h(x)^k)_{2j}\right) \\
    \sL^{-1}(\xi_{S^{2l}})&=1-(-1)^{\frac{l}{2}}2^{l+1}(2^{l-1}-1)\frac{3-(-1)^l}{2}\num\left(\frac{B_{\frac{l}{2}}}{2l}\right)
\end{align*}

In \eqref{eqn:obstr-formula-Hirz-simplified}, the maps $\pi$ and $\proj_1$ are projections on the first and second summand, respectively, in the splitting
\[
    \cp^n\times S^{2l}\simeq Th(\cpdt)\vee\cp^n;
\]
see also Remark~\ref{rem:splitting-cp-times-S-thom-space}. In cohomology, we have $H^*(\cp^n\times S^{2l})\cong H^*(\cp^n)\otimes H^*(S^{2l})$, and the direct summands $\widetilde{H}^*(Th(\cpdt))$ and $\widetilde{H}^*(\cp^n)$ correspond to the subrings $\widetilde{H}^*(\cp^n)\times y$ and $\widetilde{H}^*(\cp^n)\times 1$, respectively. Finally,
\[
    \sL(\cp^n)=\left(\frac{x}{tanh(x)}\right)^{n+1}
\]
where $x$ generates $H^*(\cp^n)$ (see, e.g., the proof of \cite[19.4]{milnor1974characteristic}). Substituting these results into \eqref{eqn:obstr-formula-Hirz-simplified} we obtain the formula \eqref{eqn:obstr-final-formula-even-l}. \qed
\begin{com}
    The computation of Pontryagin classes and $\sL$-classes in formula \eqref{eqn:obstr-formula-Hirz-simplified} is another place where the calculations are simplified compared to the case when $k=0$ studied in \cite{Brumfiel-(1970))}. Since cup products do not vanish in $H^*(\cp^n;\Q)$, some non-linear terms appear, and $\sigma_{n,0}^{DIFF}$ is is in general only polynomial in the coordinates of $[\xi,t]$. In particular, $\sigma_{n,0}^{DIFF}$ is not a homomorphism.
\end{com}
\noindent\textbf{Proof of Theorem~\ref{mainthm1}: }Choosing some splitting in Lemma~\ref{suspensg/o} and generators of $\ker[\Sigma^{2l}\cp^n,J]$, see \eqref{eqn:any-choice-of-generators-ker-J}, we obtain a specific isomorphism
\[
    \coker[\Sigma^{k}\cp^n,\Omega J]\oplus\Z^{t_{n,k}^{'}}\xrightarrow{\cong}[\Sigma^k\cp^n,G/O]
\]
This gives us, by Corollary~\ref{cor:norm-inv-splitting}, the following isomorphism:
\begin{align}
\label{eqn:norm-inv-iso-generators-1}
    T_{n,k}^{'}\oplus\Z^{t_{n,k}^{'}+\epsilon_{k}} \xrightarrow{\cong}[Th(\cpd),G/O]\cong\sN_{\partial}^{DIFF}(\cpd).
\end{align}

For $2n+k\equiv2\mod4$, $\sigma_{n,k}^{DIFF}$ is a homomorphism into $\Z_2$, so 
\[
    \ker\sigma_{n,k}^{DIFF}\cong\Z^{t_{n,k}^{'}+\epsilon_k} \oplus T_{n,k} 
\]
for some torsion subgroup $T_{n,k}$. Note that the free part of $\ker\sigma_{n,k}^{DIFF}$ is, in general, a subgroup of index $1$ or $2$ in the free part of \eqref{eqn:norm-inv-iso-generators-1}. The statement of the theorem follows from \eqref{SESeven} by observing the relation between $t_{n,k}$ and $t_{n,k}^{'}$; see Remark~\ref{com:relation-exponents}. This argument was already hinted at in Remark~\ref{homotgroupG/O}.

For $2n+k\equiv0\mod4$, composing \eqref{eqn:norm-inv-iso-generators-1} with the surgery obstruction map we obtain:
\[
    T_{n,k}^{'}\oplus\Z^{t_{n,k}^{'}+\epsilon_k} \xrightarrow{\cong} \sN_{\partial}^{DIFF}(\cpd)\xrightarrow{\sigma_{n,k}^{DIFF}}\Z.
\]
From Lemma~\ref{lem:surgery-obstruction-signature-formula} we know that this composition is zero on $T_{n,k}^{'}$ (for any choice of splitting in Lemma~\ref{suspensg/o}). From Lemma~\ref{lema:obstr-formula-simplified} it is obvious that this composition is given on $\Z^{t_{n,k}^{'}+\epsilon_k}$ by a linear polynomial with non-zero image. Thus, 
\[
    \ker\sigma_{n,k}^{DIFF}\cong T_{n,k}^{'}\oplus\Z^{t_{n,k}^{'}+\epsilon_k-1},
\]
where the isomorphism is given by choosing specific generators of the subgroup of roots of \eqref{eqn:obstr-final-formula-even-l}. Again, the statement of the theorem follows from \eqref{SESeven} by observing the relation between $t_{n,k}$ and $t_{n,k}^{'}$ from Remark~\ref{com:relation-exponents}.

Finally, if $2n+k$ is odd, we obtain from \eqref{SESodd} the following short exact sequence:
\[
    0\to\coker(\sigma_{n,k+1}^{DIFF})\to\sS_{\partial}^{DIFF}(\cpd)\to\sN_{\partial}^{DIFF}(\cpd)\to0.
\]
Here, the group $\sN_{\partial}^{DIFF}(\cpd)$ is finite by Lemma~\ref{suspensg/o}. Since $\coker(\sigma_{n,k+1}^{DIFF})$ is either a quotient of $\Z_2$, or a quotient of $\Z$ by some non-zero subgroup, it is finite as well. This completes the proof of the theorem. \qed

%% file: splittinginv.tex
\section{The Forgetful Map $F_{\cpd}$}
\label{nonsmooth}
In this section, we work towards a proof of Theorems \ref{mainthm2}, \ref{mainthm3} and Corollary~\ref{maincoroll}. There is a surgery exact sequence analogous to the smooth one for $TOP$-manifolds \cite[Section 11.6]{Lueck-Macko(2024)} and for $\cpdt$ it splits analogously to (\ref{SESeven}). We obtain a commutative diagram 
\begin{equation}
\label{smoothingcommut}
    \begin{tikzcd}
        0 \arrow{r}{} & \sS_{\partial}^{TOP}(\cpdt) \arrow{r}{\eta_{n,2l}^{TOP}} & \sN_{\partial}^{TOP}(\cpdt) \arrow{r}{\sigma_{n,2l}^{TOP}} & L_{2(n+l)}(\Z) \\
        0 \arrow{r}{} & \sS_{\partial}^{DIFF}(\cpdt) \arrow{r}{\eta_{n,2l}^{DIFF}} \arrow{u}{F_{\cpdt}} & \sN_{\partial}^{DIFF}(\cpdt) \arrow{r}{\sigma_{n,2l}^{DIFF}} \arrow{u}{} & L_{2(n+l)}(\Z) \arrow[equal]{u} 
    \end{tikzcd}
\end{equation}
There is an isomorphism
\begin{align*}
   \textstyle\sN_{\partial}^{TOP}(\cpdt)&\xrightarrow{\cong}\displaystyle\bigoplus_{i=0}^{t_{n,2l}^{'}}\textstyle L_{4i}(\Z)\oplus\displaystyle\bigoplus_{i=0}^{n+1-t_{n,2l}^{'}}\textstyle L_{4i+2}(\Z)\cong\textstyle \Z^{t_{n,2l}^{'}}\oplus\Z_2^{n+1-t_{n,2l}^{'}}
\end{align*}
given by splitting invariants $\overline{\sigma}_{m,2l}$, $0\leq m\leq n$. These are the surgery obstructions $\sigma_{m,2l}^{TOP}$ along the standardly embedded $\cp^m\times \text{D}^{2l}$ $(m\leq n)$. More precisely, $\overline{\sigma}_{m,2l}$ is given by the composition $$\textstyle\sN_{\partial}^{TOP}(\cpdt)\xrightarrow{\sN_{\partial}^{TOP}(i_{m,n})}\sN_{\partial}^{TOP}(\cp^m\times \text{D}^{2l})\xrightarrow{\sigma_{m,2l}^{TOP}}L_{2(m+l)}(\Z).$$ The map $\eta_{n,2l}^{TOP}$ corresponds to the inclusion of the subgroup of elements with the last splitting invariant $\overline{\sigma}_{n,2l}=0$; see \cite[14.C]{Wall(1999)}.

\noindent\textbf{Proof of Theorems~\ref{mainthm2}, \ref{mainthm3}: }Fix a particular splitting in Lemma~\ref{suspensg/o}. Choose the generators of $\ker[\Sigma^{2l}\cp^n,J]$ as described in Remark~\ref{kerJgenerstandembed}. Finally, choose some generators of $\ker\sigma_{n,2l}^{DIFF}$. These choices give us compatible isomorphisms:
\[
\begin{tikzcd}
    \Z^{t_{n,2l}}\oplus T_{n,2l} \arrow[d,phantom,sloped,"\cong"] \arrow[r,hookrightarrow,"E_{n,l}"] & \Z^{t_{n,2l}^{'}+\epsilon_{2l}}\oplus T_{n,2l}^{'} \arrow[d,phantom,sloped,"\cong"] \\
    \sS_{\partial}^{DIFF}(\cpdt) \arrow{r}{\eta_{n,2l}^{DIFF}} & \sN_{\partial}^{DIFF}(\cpdt)
\end{tikzcd}
\]
see also Corollary~\ref{cor:norm-inv-splitting}. Here, $E_{n,l}$ is given by a $(2\times2)$-block matrix
\[
\begin{pmatrix}
    P_{n,l} & 0 \\
    Q_{n,l} & R_{n,l}
\end{pmatrix}
\]
where $P_{n,l}:\Z^{t_{n,2l}}\to\Z^{t_{n,2l}^{'}+\epsilon_{2l}}$. The map $\eta_{n,l}^{DIFF}$ corresponds to the inclusion of the subgroup $\ker\sigma_{n,l}^{DIFF}$ in $\sN_{\partial}^{DIFF}(\cpdt)$; see \eqref{SESeven}. Thus, the matrix $E_{n,l}$, and specifically $P_{n,l}$, is determined by a choice of generators of $\ker\sigma_{n,l}^{DIFF}$. If $n+l$ is odd, the free part of $\ker\sigma_{n,l}^{DIFF}$ is either the entire free part of $\sN_{\partial}^{DIFF}(\cpdt)$, or a subgroup of index $2$, as already mentioned in the proof of Theorem~\ref{mainthm1}. Thus, either the matrix $P_{n,l}$ is the identity matrix, or $\coker P_{n,l}\cong\Z_2$.

From Diagram~\eqref{smoothingcommut} we see that the matrix $A_{n,l}$ from \eqref{eqn:matrix-forgetful-map} can be obtained by computing the integral splitting invariants of the composition
\begin{align}
\label{eqn:splitting-inv-on-image-P}
    \Z^{t_{n,2l}}\xrightarrow{P_{n,l}} \Z^{t_{n,2l}^{'}+\epsilon_{2l}}\subseteq\sN_{\partial}^{DIFF}(\cpdt)\to\sN_{\partial}^{TOP}(\cpdt).
\end{align}
The maps on normal invariants from \eqref{smoothingcommut} commute with maps induced by $i_{m,n}$. In particular, we obtain a diagram
\begin{equation*}
    \begin{tikzcd}
        \sN_{\partial}^{TOP}(\cpdt) \arrow{r}{\sN_{\partial}^{TOP}(i_{m,n})} &[5em] \sN_{\partial}^{TOP}(\cp^m\times \text{D}^{2l}) \arrow{r}{\sigma_{m,2l}^{TOP}} & L_{2(m+l)}(\Z) \\
        \sN_{\partial}^{DIFF}(\cpdt) \arrow{r}{\sN_{\partial}^{DIFF}(i_{m,n})} \arrow{u}{} & \sN_{\partial}^{DIFF}(\cp^m\times \text{D}^{2l}) \arrow{r}{\sigma_{m,2l}^{DIFF}} \arrow{u}{} & L_{2(m+l)}(\Z) \arrow[equal]{u} 
    \end{tikzcd}
\end{equation*}
so the integral splitting invariants $\overline{\sigma}_{m,2l}$ of the composition \eqref{eqn:splitting-inv-on-image-P} can be computed in terms of 
\[
\sN_{\partial}^{DIFF}(\cpdt)\xrightarrow{\sN_{\partial}^{DIFF}(i_{m,n})}\sN_{\partial}^{DIFF}(\cp^m\times \text{D}^{2l}) 
\]
and $\sigma_{m,2l}^{DIFF}$ for $0\leq m<n$ and $m+l$ even (so that $\overline{\sigma}_{m,2l}\in\Z$). 

Since we have chosen the generators of $\ker[\Sigma^{2l}\cp^n,J]$ as in Remark~\ref{kerJgenerstandembed}, the map $\sN_{\partial}^{DIFF}(i_{m,n})$ is given on the free parts by the $((t_{m,2l}^{'}+\epsilon_{2l})\times(t_{n,2l}^{'}+\epsilon_{2l}))$-matrix 
\[
    \begin{pNiceMatrix}[nullify-dots, columns-width=6pt]
        1      & 0      & & \Cdots[shorten=15pt] & 0 
        & 0      &       & & \Cdots[shorten=15pt] & 0 \\
        0      & \Ddots[shorten=20pt] & & & \Vdots[shorten=10pt] 
        &       & \Ddots[shorten=18pt] & & & \Vdots[shorten=10pt] \\
        \Vdots &        & &        &  
        & \Vdots &        & &        &  \\
           &        & &        & 0 
        &        &        & &        &  \\
        0      & \Cdots[shorten=5pt] & & 0 & 1 
        & 0      & \Cdots[shorten=5pt] & &  & 0
    \end{pNiceMatrix}.
\]
This representation is with respect to the basis $(\xi_1,\dots,\xi_{t_{n,l}^{'}})$ with, in addition, $\xi_{S^{2l}}$ as the first basis element when $l$ is even. 

The surgery obstructions $\sigma_{m,2l}^{DIFF}$ for $0\leq m<n$ and even $m+l$ are given by \eqref{eqn:obstr-final-formula-even-l}. It follows that the integral splitting invariants $\overline{\sigma}_{m,2l}$, $0\leq m<n$ of the image of $P_{n,l}$ are given by a lower triangular matrix $A_{n,l}^{'}$, with $(j,i)$-th entry (for $0\leq j,i\leq t_{n,l}-1$, $j\geq i$) given by $a_{2j,l}^i$ from \eqref{eqn:coeff-a_n,l} and \eqref{eqn:coeff-a_n,l-sphere} if $l$ is even, resp. by $b_{2j+1,l}^{i+1}$ from \eqref{eqn:coeff-b_n,l} if $l$ is odd. If $n+l$ is even, it has an additional $(t_{n,l}+1)$st column of zeros. The matrix $A_{n,l}$ from \eqref{eqn:matrix-forgetful-map} is thus given by $A_{n,l}=A_{n,l}^{'}\cdot P_{n,l}$. \qed
\newline
\newline
\noindent\textbf{Proof of Corollary~\ref{maincoroll}:} To obtain the necessary condition $(1)$ of Corollary \ref{maincoroll}, we determine the image of $A_{n,l}$ by congruences. The image of $A_{n,l}^{'}$ is characterized by expressing the integral splitting invariants $\overline{\sigma}_{m,2l}$ in terms of lower ones, as is done for the integral invariants of $\cp^n$ in \cite[4.5]{Little1989}. Since $A_{n,l}=A_{n,l}^{'}\cdot P_{n,l}$, we have $\im A_{n,l}\subseteq\im A_{n,l}^{'}$, and we obtain a necessary condition. When the total dimension $2(n+l)$ is $0\mod4$, this can be improved by imposing the condition $\overline{\sigma}_{n,2l}^{DIFF}=0$ which describes $\im P_{n,l}$, so we have a complete characterization of $\im A_{n,l}$.

For example, if $m_1\xi_1+m_2\xi_2+m_3\xi_3\in\sN_{\partial}^{DIFF}(\cp^6\times \text{D}^2)$ we get from Table~\ref{signobstrtable1}: 
\[
    \overline{\sigma}_{5,2}=-662m_1-632m_2-992m_3, \overline{\sigma}_{3,2}=44m_1+28m_2 \text{ and } \overline{\sigma}_{1,2}=-2m_1.
\]
This gives: 
\[
    \overline{\sigma}_{1,2}=-2m_1, \overline{\sigma}_{3,2}=-22\overline{\sigma}_{1,2}+ 28m_2\text{ and }\overline{\sigma}_{5,2}=-\frac{158}{7}\overline{\sigma}_{3,2}-\frac{1159}{7}\overline{\sigma}_{1,2}-992m_3.
\]
Since $m_1,m_2,m_3\in\Z$, we obtain the following congruences: 
\begin{align*}
    \overline{\sigma}_{1,2}&\equiv0\mod2, \\
    \overline{\sigma}_{3,2}+8\overline{\sigma}_{1,2} &\equiv0\mod28 \\
    \overline{\sigma}_{5,2}+\frac{158\overline{\sigma}_{3,2} +1159\overline{\sigma}_{1,2}}{7}&\equiv0\mod992
\end{align*} 

To obtain a sufficient condition, we need to take into account the $\Z_{2}$-valued splitting invariants. Suppose that $[f,\partial f]\in\sS_{\partial}^{TOP}(\cpdt)$ is a multiple of $2$ and its integral splitting invariants satisfy the required congruences. Since $\overline{\sigma}_{m,2l}$ are homomorphisms and $[f,\partial f]$ is a multiple of $2$, its $\Z_2$-valued splitting invariants are zero. Thus these invariants are definitely in $\im B_{n,l}$ and $\im C_{n,l}$. 

As discussed above, the congruences characterize $\im A_{n,l}$ when $n+l$ is even and $\im A_{n,l}^{'}$ otherwise (in these cases the congruences do not take into account $\im P_{n,l}$). The corollary is thus proved for even $n+l$ ($[f,\partial f]$ belongs to $\im F_{n,l}$). For odd $n+l$, since we know that the integral invariants satisfy the required congruences and the $\Z_2$-invariants are zero, we have the following:
\[
    \eta_{n,2l}^{TOP}([f,\partial f])\in\im(\sN_{\partial}^{DIFF}(\cpdt)\to\sN_{\partial}^{TOP}(\cpdt)).
\]
Let $[\xi_f,t_f]\in\sN_{\partial}^{DIFF}(\cpdt)$ denote an element in the preimage of $\eta_{n,2l}^{TOP}([f,\partial f])$. We know that $\sigma_{n,2l}^{TOP}(\eta_{n,2l}^{TOP}([f,\partial f]))=0$. This implies that $\sigma_{n,2l}^{DIFF}([\xi_f,t_f])=0$ by \eqref{smoothingcommut}, so $[\xi_f,t_f]\in\im\eta_{n,2l}^{DIFF}$.
Thus $[(f,\partial f)]\in\im F_{\cpdt}$. 
\qed

%% file: appendix.tex
\section{}
Let $H$ be the class of the canonical complex line bundle over $\cp^n$ in $K^0(\cp^n)$. Denote:
    \begin{align*}
        \mu&=H-1\in\widetilde{K}^0(\textstyle{\cp^n}) \\       \mu_0&=r(\mu)\in\widetilde{KO}^0(\textstyle{\cp^n}) \\
        \mu_i&=r(g^i\cdot\mu)\in\widetilde{KO}^{-2i}(\textstyle{\cp^n}) \text{ for }i=1,2,3
    \end{align*}
    where $r:\widetilde{K}^*(X)\to\widetilde{KO}^*(X)$ is the real restriction map and $g\cdot\mu$ denotes the Bott periodicity isomorphism $\widetilde{K}^0(\cp^n)\cong\widetilde{K}^{-2}(\cp^n)$ where $g=H-1\in K^0(\cp^1)\cong K^0(S^2)$. According to Fujii \cite{Fujii}, the values of the reduced cohomology ring $\widetilde{KO}^*(\cp^n)$ are as follows:
\begin{lema}\cite[Thm 2]{Fujii}
\label{KOCP}
    \begin{itemize}
        \item[0) ]$KO^0(\cp^n)$ is a truncated polynomial ring generated by $\mu_0$ subject to the relations 
        \begin{align*}
            \mu_0^{t+1}=0 & \text{ if }n=2t \\
            2\mu_0^{2t+1}=0 \text{ and } \mu_0^{2t+2}=0 & \text{ if }n=4t+1 \\
            \mu_0^{2t+2}=0 & \text{ if }n=4t+3
        \end{align*}
        \item[1) ]$\widetilde{KO}^{-1}(\cp^n)=0$
        \item[2) ]$\widetilde{KO}^{-2}(\cp^n)$ is the free module generated by $\mu_1,\mu_1\mu_0,\dots,\mu_1\mu_0^{t-1}$ where $t=\left[\frac{n}{2}\right]$, and also $\mu_1\mu_0^t$ if $n\equiv1\mod4$ or $\sigma$ with the relation $2\sigma=\mu_1\mu_0^t$ if $n\equiv3\mod4$
        \item[3) ]$\widetilde{KO}^{-3}(\cp^n)=0$ if $n\neq4t+3$ and $\Z_2$ for $n=4t+3$
        \item[4) ]$\widetilde{KO}^{-4}(\cp^n)$ is the free module generated by $\mu_2,\mu_2\mu_0,\dots,\mu_2\mu_0^{t-1}$ where $t=\left[\frac{n}{2}\right]$, and also $\mu_2\mu_0^t$ with the relation $2\mu_2\mu_0^t=0$ if $n\equiv3\mod4$ 
        \item[5) ]$\widetilde{KO}^{-5}(\cp^n)=0$
        \item[6) ]$\widetilde{KO}^{-6}(\cp^n)$ is the free module generated by $\mu_3,\mu_3\mu_0,\dots,\mu_3\mu_0^{t-1}$ where $t=\left[\frac{n}{2}\right]$, and also $\mu_3\mu_0^t$ if $n\equiv3\mod4$ or $\tau$ with the relation $2\tau=\mu_3\mu_0^t$ if $n\equiv1\mod4$
        \item[7) ]$\widetilde{KO}^{-7}(\cp^n)=0$ if $n\neq4t+1$ and $\Z_2$ for $n=4t+1$
    \end{itemize}
\end{lema}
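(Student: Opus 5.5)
I would prove this by induction on $n$, using the cofibration sequence $\cp^{n-1}\xrightarrow{i}\cp^n\xrightarrow{j}S^{2n}$ together with complex $K$-theory as a known input. The input is $K^0(\cp^n)=\Z[\mu]/(\mu^{n+1})$, $K^{-1}(\cp^n)=0$, Bott periodicity $g\cdot$, and complex conjugation $\psi^{-1}$ acting by $H\mapsto H^{-1}$. The first step is to compute the action of conjugation on $\widetilde K^0(\cp^n)$. I would change basis to $w=H+H^{-1}-2$, which is $\psi^{-1}$-invariant and satisfies $c(\mu_0)=w$. The odd elements $(H-H^{-1})w^i$ are anti-invariant. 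Since conjugation acts on $g\in\widetilde K^{0}(S^2)$ by $-1$, the action on $\widetilde K^{-2i}(\cp^n)$ is twisted by $(-1)^i$. This determines the images of $c$ and the kernels of $r$, via $rc=2$ and $cr=1+\psi^{-1}$. It also explains the Pontryagin characters $h(x)^i$ and $g_i(x)$ used in Lemma~\ref{cherngener}.

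Next I would run the long exact sequence in $\widetilde{KO}^*$ of the cofibration, starting from the known values $\widetilde{KO}^{-q}(S^{2n})=\widetilde{KO}^{-q-2n}(\mathrm{pt})$, which are $\Z,\Z_2,\Z_2,0,\Z,0,0,0$ in Bott-periodic order. The parity of the boundary map is the delicate point. To control it, I would compare with complex $K$-theory through Bott's exact sequence
\[
\cdots\to\widetilde{KO}^{q+1}(X)\xrightarrow{\eta}\widetilde{KO}^{q}(X)\xrightarrow{c}\widetilde K^{q}(X)\xrightarrow{r\beta^{-1}}\widetilde{KO}^{q+2}(X)\to\cdots .
\]
Because $\widetilde K^{\mathrm{odd}}(\cp^n)=0$, this sequence cuts into short pieces. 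Ranks are easy: the rank of $\widetilde{KO}^{-2i}(\cp^n)$ equals the rank of the $(-1)^i$-fixed part of $\widetilde K^0$. This gives $\lfloor n/2\rfloor$ or $\lceil n/2\rceil$ accordingly, and rank $0$ in odd degrees. Torsion can only be $2$-torsion, arising from $\eta$-multiples.

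I would then do the $4$-periodic bookkeeping on $n$ modulo $4$ through the inductive step $n-1\to n$. The top cell $S^{2n}$ contributes a new free generator exactly when its degree fits the pattern $\widetilde{KO}^{-2i-2n}(\mathrm{pt})=\Z$. Otherwise it contributes a $\Z_2$ from $\eta$ or $\eta^2$ on the sphere, and this $\Z_2$ either survives or is killed by the boundary map. The boundary map is $\widetilde{KO}^{*}(\Sigma\cp^{n-1})\to\widetilde{KO}^*(S^{2n})$, induced by the attaching map $H_{n-1}$. I would identify it by naturality with $c$: compute whether the top class $\mu^n$, respectively $(H-H^{-1})w^{\lfloor (n-1)/2\rfloor}$, lies in $c(\widetilde{KO})$, or is only half of $c$ of something. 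That detects the extra generators $\sigma$ with $2\sigma=\mu_1\mu_0^t$ and $\tau$ with $2\tau=\mu_3\mu_0^t$. It also gives the relations $2\mu_0^{2t+1}=0$ for $n=4t+1$ and $2\mu_2\mu_0^t=0$ for $n=4t+3$, and the $\Z_2$'s in degrees $-3$ and $-7$. Multiplicativity, i.e.\ that $\widetilde{KO}^0$ is generated by $\mu_0$ with $\mu_0^{\lceil\cdot\rceil}=0$, follows from filtration: $\mu_0^i$ restricts to a generator on the $4i$-cell, using $c(\mu_0^i)=w^i=\mu^{2i}+\dots$.

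I expect the main obstacle to be the $\Z_2$ extension problems when $n\equiv1,3\pmod4$. These require deciding whether $\eta$ times a class is nonzero and whether a free class is divisible by $2$. The ranks alone do not decide this. My first attempt would be to use the short exact pieces of Bott's sequence, which force $\eta\cdot\widetilde{KO}^{-2i+1}$ to equal $\ker c$ on $\widetilde{KO}^{-2i}$. For divisibility, I would use the identity $r(g\cdot\mu^{n})\in\{0,\,2\cdot\text{class}\}$, tested against the $\psi^{-1}$-fixed lattice. If that fails, I would fall back on the Atiyah--Hirzebruch spectral sequence, where the differential $d_2=Sq^2$ acts on $H^*(\cp^n;\Z_2)$ by $x^k\mapsto kx^{k+1}$. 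This pins down which $\Z_2$'s survive, and I would read off the extensions from the complexification map.
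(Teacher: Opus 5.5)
The paper does not prove this lemma. It is quoted from \cite{Fujii} and used as a black box, so your outline can only be compared with the standard computation. You have assembled the right ingredients for that computation: cell induction, Bott's sequence, complexification, and $Sq^2$ in the Atiyah--Hirzebruch spectral sequence. As written, however, three steps fail, and they are exactly the steps that carry the $2$-primary information.

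\emph{The basis.} $w^i$ and $(H-H^{-1})w^i$ are not a $\Z$-basis of $\widetilde K^0(\cp^n)$. Since $H-H^{-1}=(2\mu+\mu^2)/(1+\mu)=2\mu-\mu^2+\cdots$, they span a sublattice of index $2^{\lceil n/2\rceil}$. Computing $\im(1+\psi^{-1})$ on that sublattice gives $2w^i$ instead of $w^i=(1+\psi^{-1})(\mu w^{i-1})$. An integral basis is $w^i,\mu w^i$, because $\mu^2=w+\mu w$, and $\psi^{-1}(\mu w^i)=w^{i+1}-\mu w^i$. Hence $\widetilde K^0(\cp^{2m})\cong\Z[C_2]^m$ and $\widetilde K^0(\cp^{2m+1})\cong\Z[C_2]^m\oplus\Z\mu^{2m+1}$, with $\psi^{-1}=-1$ on the last summand.

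\emph{The image of $c$.} $rc=2$ and $cr=1+\psi^{-1}$ do not determine $\im c$. With $\kappa=(-1)^i\psi^{-1}$ on $\widetilde K^{-2i}$ they only give $\im(1+\kappa)\subseteq\im c\subseteq\ker(1-\kappa)$. By the module structure above these inclusions are equalities except on the top summand for $n$ odd. That summand is exactly where $\sigma$, $\tau$, the relations $2\mu_0^{2t+1}=0$ and $2\mu_2\mu_0^{[n/2]}=0$, and the $\Z_2$'s in degrees $-3,-7$ live. The top cell settles it. If $y$ generates $\widetilde{KO}^{-2i}(S^{2n})\cong\Z$, then $c(j^*y)=\pm g^i\mu^n$ when $2i+2n\equiv 0\pmod 8$, and $c(j^*y)=\pm 2g^i\mu^n$ when $2i+2n\equiv 4\pmod 8$. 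On the other hand, $c(\mu_1\mu_0^{[n/2]})=2g\mu^n$ and $c(\mu_3\mu_0^{[n/2]})=2g^3\mu^n$. So $\sigma$ exists iff $n\equiv 3$, and $\tau$ exists iff $n\equiv 1\pmod 4$. Also, a non-free top cell contributes $\Z_2$ or $0$ according to $2i+2n \bmod 8$, not always $\Z_2$.

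\emph{The ring structure.} Your filtration argument fails for odd $i$. Already $\mu_0|_{\cp^1}=r(g)=\eta^2\neq 0$, and in general $\mu_0^i|_{\cp^{2i-1}}\neq 0$ for $i$ odd. On $\cp^{2i}$ the class coming from $S^{4i}$ is $2\mu_0^i$, because $c$ is multiplication by $2$ on $KO^{-4i}(\mathrm{pt})$ for $i$ odd. For the same reason the $E_3$-page alone does not tell you which $\Z_2$'s are torsion. Its $\Z_2$'s at $x^{4k+1}$ with $4k+1<n$, in total degree $0$, are the classes $\mu_0^{2k+1}$, which have infinite order.

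A cleaner organisation avoids the one-cell boundary problems altogether.
\begin{itemize}
\item[(a)] For $n=2m$, the $E_3$-page vanishes in odd total degree. So $\widetilde{KO}^{\mathrm{odd}}(\cp^{2m})=0$, and Bott's sequence becomes $0\to\widetilde{KO}^{2j}\xrightarrow{c}\widetilde K^{2j}\to\widetilde{KO}^{2j+2}\to 0$.
\item[(b)] Then $\im c=\im(1+\kappa)$. It is spanned by $c(\mu_0^j)=w^j$, $c(\mu_1\mu_0^j)=g(H-H^{-1})w^j$, $c(\mu_2\mu_0^j)=g^2w^{j+1}$ and $c(\mu_3\mu_0^j)=g^3(H-H^{-1})w^j$. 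This gives all items for even $n$.
\item[(c)] For $n=2m+1$, every class in $\widetilde{KO}^{\mathrm{even}}(\cp^{2m})$ is $r\beta^{-1}$ of a complex class. Since $\widetilde K^{\mathrm{even}}(S^{4m+1})=0$, all connecting maps into $\widetilde{KO}^*(S^{4m+2})$ vanish. Hence $\widetilde{KO}^q(\cp^{2m+1})\cong\widetilde{KO}^q(\cp^{2m})\oplus KO^{q-4m-2}(\mathrm{pt})$ for all $q$.
\item[(d)] The generators then follow from the top-cell computation above.
\end{itemize}
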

This theorem determines the entire cohomology ring $\widetilde{KO}^0(\cp^n)$, since we have the real Bott periodicity isomorphisms $\widetilde{KO}^{-i}(\cp^n)\xrightarrow{\cong}\widetilde{KO}^{-i-8}(\cp^n)$ given by a product with the generator $g_{\R}\in\widetilde{KO}^{-8}(S^0)$.

\newpage

\begin{table}[ht!]
        \centering
        \begin{tabular}{|c|c|c|c|c|c|c|}
            \hline
            \diaghead{\theadfont aaaaaaaaaa}%
            {$l$}{$n$} & $2$ & $3$ & $4$ \\
            \hline
            $1$ & $
            \begin{pmatrix}
                -2
            \end{pmatrix}\cdot P_{1,2}
            $ &
            $
            \begin{pmatrix}
                -2 & 0
            \end{pmatrix}\cdot P_{1,3}\hspace{2pt}{}^{(i)}
            $
            &
            $
            \begin{pmatrix}
                -2 & 0 \\
                44 & 28
            \end{pmatrix}\cdot P_{1,4}
            $
            \\
            \hline
            $2$ & 
            $
            \begin{pmatrix}
                2 & 0
            \end{pmatrix}\cdot P_{2,2}\hspace{2pt}{}^{(ii)}
            $ 
            &
            $
            \begin{pmatrix}
                2 & 0 \\
                2 & 28 
            \end{pmatrix}\cdot P_{2,3}$
            & 
            $
            \begin{pmatrix}
                2 & 0 & 0 \\
                2 & 28 & 0 
            \end{pmatrix}\cdot P_{2,4}\hspace{2pt}{}^{(iii)}
            $
            \\
            \hline
            $3$ & 
            $
            \begin{pmatrix}
                28
            \end{pmatrix}\cdot P_{3,2}
            $
            &
            $
            \begin{pmatrix}
                28 & 0
            \end{pmatrix}\cdot P_{3,3}\hspace{2pt}{}^{(iv)}
            $
            & 
            $
            \begin{pmatrix}
                28 & 0 \\
                -752 & -992
            \end{pmatrix}\cdot P_{3,4}$
            \\
            \hline
            \diaghead{\theadfont aaaaaaaaaa}%
            {$l$}{$n$} & \multicolumn{2}{c|}{$5$} & $6$ \\
            \hline
            $1$ & 
            \multicolumn{2}{c|}{$
            \begin{pmatrix}
                -2 & 0 & 0 \\
                44 & 28 & 0
            \end{pmatrix}
            \cdot P_{1,5}\hspace{2pt}{}^{(v)}$}
            &
            $
            \begin{pmatrix}
                -2 & 0 & 0 \\
                44 & 28 & 0 \\
                -662 & -632 & -992
            \end{pmatrix}\cdot P_{1,6}$
            \\
            \hline
            $2$ &
            \multicolumn{2}{c|}{$
            \begin{pmatrix}
                2 & 0 & 0 \\
                2 & 28 & 0 \\
                2 & -408 & -496 
            \end{pmatrix}\cdot P_{2,5}$}
            & 
            $
            \begin{pmatrix}
                2 & 0 & 0 & 0 \\
                2 & 28 & 0 & 0 \\
                2 & -408 & -496 & 0 
            \end{pmatrix}\cdot P_{2,6}\hspace{2pt}{}^{(vi)}
            $
            \\
            \hline
            $3$ &
            \multicolumn{2}{c|}{$
            \begin{pmatrix}
                28 & 0 & 0 \\
                -752 & -992 & 0
            \end{pmatrix}
            \cdot P_{3,5}\hspace{2pt}{}^{(vii)}
            $}
            & 
            $
            \begin{pmatrix}
                28 & 0 & 0 \\
                -752 & -992 & 0 \\
                10760 & 10208 & 8128
            \end{pmatrix}\cdot P_{3,6}$
            \\
            \hline
        \end{tabular}
        \caption{The matrices $A_{n,l}$ for $1\leq l\leq3$, $2\leq n\leq6$} 
        \label{tabobstrtors}
    \end{table}
${}^{(i)}$ The columns of $P_{1,3}$ generate the solutions of $44x+28y=0$ over the integers.

${}^{(ii)}$ The columns of $P_{2,2}$ generate the solutions of $28x+2a=0$ over the integers.

${}^{(iii)}$ The columns of $P_{2,4}$ generate the solutions of $-408x-496y+2a=0$ over the integers.

${}^{(iv)}$ The columns of $P_{3,3}$ generate the solutions of $-752x-992y=0$ over the integers.

${}^{(v)}$ The columns of $P_{1,5}$ generate the solutions of $-662x-632y-992z=0$ over the integers.

${}^{(vi)}$ The columns of $P_{2,6}$ generate the solutions of $5796x+5616y+8128z+2a=0$ over the integers, e.g.: \\
        \[P_{2,6}=
        \begin{pmatrix}
            1 & 0 & 0 \\
            0 & 1 & 0 \\
            0 & 1 & 1 \\
            -2898 & -6872 & -4064
        \end{pmatrix}
        ,\quad
        A=
        \begin{pmatrix}
            -5796 & -13744 & -8128 \\
            -5768 & -13744 & -8128 \\
            -6204 & -14240 & -8128
        \end{pmatrix}
        \]

${}^{(vii)}$ The columns of $P_{3,5}$ generate the solutions of $10760x+10208y+8128z=0$ over the integers.

The remaining matrices correspond to cases where $n+l$ is odd and are either identity matrices, or matrices with cokernel isomorphic to $\Z_2$.

\newpage

\begin{table}[ht!]
        \centering
        \begin{tabular}{|c|c|}\hline
             Manifold & Congruences \\ \hline
             \thead{$X^{14}\simeq_{\partial}\cp^6\times D^2$ \\ } & \thead{$\overline{\sigma}_{1,2}\equiv0\mod2, \overline{\sigma}_{3,2}+22\overline{\sigma}_{1,2}\equiv0\mod28$ \\
             $\overline{\sigma}_{5,2}+\dfrac{158\overline{\sigma}_{3,2}+1159\overline{\sigma}_{1,2}}{7}\equiv0\mod992$} \\ 
             \hline
             \thead{$X^{16}\simeq_{\partial}\cp^6\times D^4$} & \thead{$\overline{\sigma}_{0,4}\equiv0\mod2, \overline{\sigma}_{2,4}-\overline{\sigma}_{0,4}\equiv0\mod28$ \\
             $\overline{\sigma}_{4,4}+\dfrac{102\overline{\sigma}_{2,4}-109\overline{\sigma}_{0,4}}{7} \equiv0\mod496$ \\
             $\dfrac{351\overline{\sigma}_{4,4}}{31} + \dfrac{6443\overline{\sigma}_{0,4} - 9117\overline{\sigma}_{2,4}}{217}\equiv0\mod 8128$} \\ 
             \hline
             \thead{$X^{18}\simeq_{\partial}\cp^6\times D^6$} & \thead{$\overline{\sigma}_{1,6}\equiv0\mod28, \overline{\sigma}_{3,6}+\dfrac{188}{7}\overline{\sigma}_{1,6}\equiv0\mod992$ \\
             $\overline{\sigma}_{5,6}-\dfrac{23418}{217}\overline{\sigma}_{1,6}+\dfrac{319}{31}\overline{\sigma}_{3,6}\equiv0\mod8128$} \\ \hline
        \end{tabular}
        \caption{Congruences of Splitting Invariants}
        \label{tabsplitinv}
    \end{table}

\newpage

\begin{table}[ht!]
    \centering
    \begin{tabular}{|c|}
        \hline
         $\cp^{18}\colon$ \\ \hline \thead{$\xi_1=24\mu_0+98\mu_0^2+111\mu_0^3+54\mu_0^4+12\mu_0^5+\mu_0^6,$ \\ $\xi_2=240\mu_0^2+380\mu_0^3+28\mu_0^4+207\mu_0^5+18443\mu_0^6+5\mu_0^7+11452\mu_0^8+4110\mu_0^9,$ \\ $\xi_3=504\mu_0^3+234\mu_0^4+91\mu_0^5+64628\mu_0^6+\mu_0^7+1774\mu_0^8+12497\mu_0^9,$ \\ $\xi_4=480\mu_0^4+16\mu_0^5+14346\mu_0^6+\mu_0^7+2742\mu_0^8+5350\mu_0^9,$ \\ $\xi_5=264\mu_0^5+5350\mu_0^6+17\mu_0^7+4950\mu_0^8+20911\mu_0^9,$ \\ $\xi_6=65520\mu_0^6+12\mu_0^7+2072\mu_0^8+4145\mu_0^9,$\\ 
         $\xi_7=24\mu_0^7+1346\mu_0^8+16535\mu_0^9, \xi_8=16320\mu_0^8+8272\mu_0^9, \xi_9=28728\mu_0^9$} \\ \hline
         
         $\Sigma^2\cp^{17}\colon$ \\ \hline \thead{$\xi_1=24\mu_1+196\mu_1\mu_0+333\mu_1\mu_0^2+216\mu_1\mu_0^3+ 60\mu_1\mu_0^4+ 6\mu_1\mu_0^5,$ \\ $\xi_2=240\mu_1\mu_0+ 318\mu_1\mu_0^2+ 220\mu_1\mu_0^3+ 59\mu_1\mu_0^4+ 21625\mu_1\mu_0^5+ 13\mu_1\mu_0^6+ 5480\mu_1\mu_0^7+ 11307\mu_1\mu_0^8,$ \\ $\xi_3=504\mu_1\mu_0^2+ 152\mu_1\mu_0^3+ 79\mu_1\mu_0^4+ 15646\mu_1\mu_0^5+ 13\mu_1\mu_0^6+ 10076\mu_1\mu_0^7+ 8295\mu_1\mu_0^8,$ \\ $\xi_4=480\mu_1\mu_0^3+ 218\mu_1\mu_0^4+ 13230\mu_1\mu_0^5+ 14\mu_1\mu_0^6+ 11780\mu_1\mu_0^7+ 8436\mu_1\mu_0^8,$ \\ $\xi_5=264\mu_1\mu_0^4+ 32628\mu_1\mu_0^5+ 23\mu_1\mu_0^6+ 1776\mu_1\mu_0^7+ 10038\mu_1\mu_0^8,$ \\ $\xi_6=65520\mu_1\mu_0^5+ 10\mu_1\mu_0^6+ 6392\mu_1\mu_0^7+ 2838\mu_1\mu_0^8,$ \\ $\xi_7=24\mu_1\mu_0^6+ 10864\mu_1\mu_0^7+ 10161\mu_1\mu_0^8, \xi_8=16320\mu_1\mu_0^7+ 9306\mu_1\mu_0^8, \xi_9=28728\mu_1\mu_0^8$} \\ \hline
         
         $\Sigma^4\cp^{18}\colon$ \\ \hline \thead{$\xi_1=240\mu_2+442\mu_2\mu_0+327\mu_2\mu_0^2+114\mu_2\mu_0^3+23154\mu_2\mu_0^4+17\mu_2\mu_0^5+14020\mu_2\mu_0^6+5354\mu_2\mu_0^7+7838\mu_2\mu_0^8,$ \\ $\xi_2=504\mu_2\mu_0+316\mu_2\mu_0^2+116\mu_2\mu_0^3+6181\mu_2\mu_0^4+23\mu_2\mu_0^5+6040\mu_2\mu_0^6+14724\mu_2\mu_0^7+2646\mu_2\mu_0^8,$ \\ $\xi_3=480\mu_2\mu_0^2+78\mu_2\mu_0^3+20981\mu_2\mu_0^4+2\mu_2\mu_0^5+11908\mu_2\mu_0^6+27046\mu_2\mu_0^7+197\mu_2\mu_0^8,$ \\ 
         $\xi_4=264\mu_2\mu_0^3+43592\mu_2\mu_0^4+14\mu_2\mu_0^5+7131\mu_2\mu_0^6+28212\mu_2\mu_0^7+4855\mu_2\mu_0^8,$ \\ 
         $\xi_5=65520\mu_2\mu_0^4+14\mu_2\mu_0^5+9493\mu_2\mu_0^6+23620\mu_2\mu_0^7+11572\mu_2\mu_0^8,$ \\ 
         $\xi_6=24\mu_2\mu_0^5+8148\mu_2\mu_0^6+22416\mu_2\mu_0^7+1057\mu_2\mu_0^8,$ \\
         $\xi_7=16320\mu_2\mu_0^6+7238\mu_2\mu_0^7+49\mu_2\mu_0^8, \xi_8=28728\mu_2\mu_0^7+2848\mu_2\mu_0^8, \xi_9=13200\mu_2\mu_0^8$} \\ \hline
         
         $\Sigma^6\cp^{17}\colon$ \\ \hline \thead{$\xi_1=240\mu_3+380\mu_3\mu_0+267\mu_3\mu_0^2+84\mu_3\mu_0^3+37370\mu_3\mu_0^4+17\mu_3\mu_0^5+8530\mu_3\mu_0^6+5484\mu_3\mu_0^7+11724\tau,$ \\ $\xi_2=504\mu_3\mu_0+234\mu_3\mu_0^2+92\mu_3\mu_0^3+18025\mu_3\mu_0^4+17\mu_3\mu_0^5+11959\mu_3\mu_0^6+23196\mu_3\mu_0^7+12630\tau,$ \\ $\xi_3=480\mu_3\mu_0^2+16\mu_3\mu_0^3+55025\mu_3\mu_0^4+2\mu_3\mu_0^5+10709\mu_3\mu_0^6+25396\mu_3\mu_0^7+492\tau,$ \\ $\xi_4=264\mu_3\mu_0^3+5350\mu_3\mu_0^4+18\mu_3\mu_0^5+15290\mu_3\mu_0^6+27348\mu_3\mu_0^7+3276\tau,$ \\ $\xi_5=65520\mu_3\mu_0^4+12\mu_3\mu_0^5+13021\mu_3\mu_0^6+26896\mu_3\mu_0^7+816\tau,$ \\ $\xi_6=24\mu_3\mu_0^5+1346\mu_3\mu_0^6+25752\mu_3\mu_0^7+3108\tau,$ \\ $\xi_7=16320\mu_3\mu_0^6+8272\mu_3\mu_0^7+126\tau, \xi_8=28728\mu_3\mu_0^7+3108\tau, \xi_9=13200\tau$} \\ \hline
    \end{tabular}
    \caption{\centering Generators of $\ker[\Sigma^{2l}\cp^n,J]\subseteq[\Sigma^{2l}\cp^n,G/O]$ \\ for $0\leq l\leq3$, $n=17,18$}
    \label{kerJgenertable}
\end{table}

\newpage

\begin{table}[ht!]
    \centering
    \begin{tabular}{|c|c|c|c|}
        \hline
        $\Sigma^8\cp^4$ & \thead{$\xi_1=g_{\R}(504\mu_0+398\mu_0^2)$ \\ 
        $\xi_2=480g_{\R}\mu_0^2$} & $\Sigma^{20}\cp^4$ & \thead{$\xi_1=g_{\R}^2(65520\mu_2+22\mu_2\mu_0)$ \\ 
        $\xi_2=24g_{\R}^2\mu_2\mu_0$} \\
        \hline
        $\Sigma^{10}\cp^4$ & 
        \thead{$\xi_1=g_{\R}(504\mu_1+316\mu_1\mu_0)$ \\ 
        $\xi_2=480g_{\R}\mu_1\mu_0$} & $\Sigma^{22}\cp^4$ & 
        \thead{$\xi_1=g_{\R}^2(65520\mu_3+20\mu_3\mu_0)$ \\ 
        $\xi_2=24g_{\R}^2\mu_3\mu_0$}  \\
        \hline
        $\Sigma^{12}\cp^4$ & \thead{$\xi_1=g_{\R}(480\mu_2+202\mu_2\mu_0)$ \\ 
        $\xi_2=264g_{\R}\mu_2\mu_0$} & $\Sigma^{24}\cp^4$ & \thead{$\xi_1=g_{\R}^3(24\mu_0+9518\mu_0^2)$ \\ 
        $\xi_2=16320g_{\R}^3\mu_0^2$} \\
        \hline
        $\Sigma^{14}\cp^4$ & 
        \thead{$\xi_1=g_{\R}(480\mu_3+140\mu_3\mu_0)$ \\ 
        $\xi_2=264g_{\R}\mu_3\mu_0$} & $\Sigma^{26}\cp^4$ & \thead{$\xi_1=g_{\R}^3(24\mu_1+2716\mu_1\mu_0)$ \\ 
        $\xi_2=16320g_{\R}^3\mu_1\mu_0$} \\
        \hline
        $\Sigma^{16}\cp^4$ & \thead{$\xi_1=g_{\R}^2(264\mu_0+27278\mu_0^2)$ \\ 
        $\xi_2=65520g_{\R}^2\mu_0^2$} & $\Sigma^{28}\cp^4$ & \thead{$\xi_1=g_{\R}^3(16320\mu_2+1034\mu_2\mu_0)$\\ 
        $\xi_2=28728g_{\R}^3\mu_2\mu_0$} \\ 
        \hline
        $\Sigma^{18}\cp^4$ & 
        \thead{$\xi_1=g_{\R}^2(264\mu_1+54556\mu_1\mu_0)$ \\ 
        $\xi_2=65520g_{\R}^2\mu_1\mu_0$} & $\Sigma^{30}\cp^4$ & \thead{$\xi_1=g_{\R}^3(16320\mu_3+2068\mu_3\mu_0)$ \\ 
        $\xi_2=28728g_{\R}^3\mu_3\mu_0$} \\
        \hline
    \end{tabular}
    \caption{\centering Generators of $\ker[\Sigma^{2l}\cp^n,J]\subseteq[\Sigma^{2l}\cp^n,G/O]$ \\ for $4\leq l\leq15$, $n=4$}
    \label{kerJgenertable-l-big-n-4}
\end{table}

\newpage
Next, we provide tables of Pontryagin classes and surgery obstructions. In these tables, $x^i$ denotes the generator of $H^i(Th(\cpdt);\Z)$. It is not meant to suggest that $x^{2i}$ is the square of $x^i$ (or any other cup product relations).
\begin{table}[ht!]
    \centering
    \begin{tabular}{|c|c|c|}
        \hline 
        $n$ & Total Pontryagin class & Surgery obstruction \\ \hline
        $1$ & \thead{$1+48m_1x^2$} & \thead{$-2m_1$} \\ \hline
        $3$ & \thead{$P_1-(2400m_1+1440m_2)x^4$} & \thead{$44m_1 + 28m_2$} \\ \hline
        $5$ & \thead{$P_3-1440m_2x^4$ \\ $+(91728m_1 +90720m_2 + 120960m_3)x^6$} & \thead{$-662m_1 - 632m_2 - 992m_3$} \\ \hline
        $7$ & \thead{$P_5-(3345600m_1+3346560m_2$ \\ $+3225600m_3+2419200m_4)x^8$} & \thead{$9368m_1 + 9424m_2 + 8192m_3 + 8128m_4$} \\ \hline
        $9$ & \thead{$P_7-2419200m_4x^8+(120812208m_1$ \\
        $+ 120811680m_2+121080960m_3$ \\ $+303367680m_4+191600640m_5)x^{10}$} & \thead{$-130922m_1 - 130832m_2 - 133952m_3 $ \\ $ - 360064m_4 - 261632m_5$} \\ \hline
        $11$ & \thead{$P_9-(4352493600m_1+1730255567040m_2$ \\ $+1253360908800m_3+1067934067200m_4$ \\ $+2615348736000m_5+2615348736000m_6)x^{12}$} & \thead{$1824836m_1 + 957659336m_2 + 693540608m_3$ \\ $+ 589897856m_4 + 1447377920m_5 + 1448424448m_6$} \\ \hline
        $13$ & \thead{$P_{11}-2615348736000m_6x^{12}+(156718729488m_1$ \\ $+157376539887840m_2+113961751263360m_3$ \\ $+96660521717760m_4+237707801210880m_5$ \\ $+476118010368000m_6+298896998400m_7)x^{14}$} & \thead{$-25420670m_1 - 30859813576m_2 $ \\ $ - 22345976608m_3 - 18943544704m_4 $ \\
        $- 46607296000m_5 - 93367150592m_6 - 67100672m_7$} \\ \hline
        $15$ & \thead{$P_{13}-(5642133590400m_1$ \\
        $+23469801891160320m_2$ \\ 
        $+32971386263961600m_3$ \\
        $+36418511127398400m_4$ \\ 
        $+18444933771264000m_5$ \\
        $+44342366035968000m_6$ \\ 
        $+28454994247680000m_7$ \\ 
        $+21341245685760000m_8)x^{16}$} & \thead{$354076208m_1 + 1959482017568m_2$ \\ $ + 2872567698432m_3 + 3205593070848m_4$ \\
        $ + 1412456558592m_5 + 3502171938816m_6$ \\ $ + 2588712566784m_7 + 1941802827776m_8$} \\ \hline
        $17$ & \thead{$P_{15}-21341245685760000m_8x^{16}$ \\ $+(203119137971568m_1$ \\
        $+11403499401188647200m_2$ \\ $+11592924132729260160m_3$ \\
        $+12554193285185610240m_4$ \\ $+8747245743106590720m_5$ \\
        $+6746234526839808000m_6$ \\ $+13028190253785907200m_7$ \\
        $+15327282651512832000m_8$ \\ $+20436376868683776000m_9)x^{18}$} & \thead{$-4931681234m_1 - 408268795673248m_2 $ \\
        $- 409912538767488m_3 - 443417372493568m_4$ \\ $ - 313344128500736m_5 - 226263421313024m_6 $ \\
        $- 464901718245376m_7 - 541916044886016m_8 $ \\ $ - 753623571759104m_9$} \\ \hline
    \end{tabular}
    \caption{\centering Total Pontryagin Classes and Surgery Obstructions of \\ $\xi=m_1\xi_1+\dots+m_{t_{n,2}^{'}}\xi_{t_{n,2}^{'}}\in[Th(\cp^n\times D^{2}),G/O]_{free}$}
    \label{signobstrtable1}
\end{table}

\newpage

\begin{table}[ht!]
    \centering
    \begin{tabular}{|c|c|c|}
        \hline 
        $n$ & Total Pontryagin class & Surgery obstruction \\ \hline
        $2$ & \thead{$1-1440m_1x^4$} & \thead{$28m_1 + 2y$} \\ \hline
        $4$ & \thead{$P_2+(55440m_1+60480m_2)x^6$} & \thead{$-408m_1 - 496m_2 + 2y$} \\ \hline
        $6$ & \thead{$P_4-(2022720m_1$ \\ $+2016000m_2+2419200m_3)x^8$} & \thead{$5796m_1 + 5616m_2 + 8128m_3 + 2y$} \\ \hline
        $8$ & \thead{$P_6+(73042992m_1+73047744m_2$ \\ $+71850240m_3+95800320m_4)x^{10}$} & \thead{$-81008m_1 - 81312m_2 $ \\ $- 73728m_3 - 130816m_4 + 2y$} \\ \hline
        $10$ & \thead{$P_8-(926141055840m_1$ \\ $+248648400000m_2+839091052800m_3$ \\ $+1743565824000m_4+2615348736000m_5)x^{12}$} & \thead{$512584428m_1 + 137377056m_2 + 464390784m_3$ \\ $ + 965136640m_4 + 1448424448m_5 + 2y$} \\ \hline
        $12$ & \thead{$P_{10}+(60222864955344m_1+16222735983168m_2$ \\ $+54481549224960m_3+113277395911680m_4$ \\ $170084846131200m_5+149448499200m_6)x^{14}$} & \thead{$-11297842824m_1 - 3045962000m_2$ \\
        $- 10217829888m_3- 21246888192m_4$ \\ $- 31906572288m_5 - 33550336m_6 + 2y$} \\ \hline
        $14$ & \thead{$P_{12}-(20868702494428800m_1$ \\ $+8587799962560000m_2+17860060842624000m_3$ \\ $+14086351319040000m_4+19562808545280000m_5$ \\ $+10670622842880000m_6+21341245685760000m_7)x^{16}$} & \thead{$1837020065524m_1 + 764738647952m_2$ \\ 
        $+ 1569167127104m_3+ 1165483605760m_4$ \\ $+ 1605486011392m_5 + 970733662208m_6 $ \\
        $ + 1941802827776m_7 + 2y$} \\ \hline
        $16$ & \thead{$P_{14}+(4901857193340714096m_1$ \\ $+6514797972506185152m_2$ \\
        $+12170329845063586560m_3$ \\ $+11680218932456954880m_4$ \\
        $+10620124120276070400m_5$ \\ $+9664703227481702400m_6$ \\
        $+5960609920032768000m_7$ \\ $+10218188434341888000m_8)x^{18}$} & \thead{$-170191084996064m_1$ \\
        $- 235866025323072m_2$ \\
        $- 439760842491904m_3$ \\
        $- 423815802886656m_4$ \\ 
        $- 382076161564672m_5$ \\
        $- 350899867107328m_6 $ \\
        $- 208803325739008m_7$ \\
        $- 376811785879552m_8 + 2y$} \\ \hline
        $18$ & \thead{$P_{16}-(1672327810756374525600m_1$ \\ $+1638168449212286640000m_2$ \\ $+2459231585909344224000m_3$ \\ $+3027100644801192960000m_4$ \\ $+3521903018928053760000m_5$ \\ $+2110315614326599680000m_6$ \\ $+921056151928872960000m_7$ \\ $+2676192208994304000000m_8$ \\ $1605715325396582400000m_9)x^{20}$} & \thead{$23883876282090684m_1$ \\
        $+ 22976196043040576m_2$ \\
        $+ 33941981926660352m_3$ \\
        $+42536767410890240m_4$ \\ 
        $+ 50188301664458752m_5$ \\
        $+ 29300667700887552m_6$ \\ 
        $+ 12409659905671168m_7$ \\
        $+ 37610704241885184m_8 $ \\ 
        $ + 23998307331473408m_9 + 2y$} \\ \hline
    \end{tabular}
    \caption{\centering Total Pontryagin Classes and Surgery Obstructions of \\ $\xi=m_1\xi_1+\dots+m_{t_{n,4}^{'}}\xi_{t_{n,4}^{'}}+y\xi_{S^4}\in[Th(\cp^n\times D^{4}),G/O]_{free}$}
    \label{signobstrtable2}
\end{table}

\newpage

\begin{table}[ht!]
    \centering
    \begin{tabular}{|c|c|c|}
        \hline 
        $n$: & Total Pontryagin class & Surgery obstruction \\ \hline
        $1$ & \thead{$1-1440m_1x^4$} & \thead{$28m_1$} \\ \hline
        $3$ & \thead{$P_{1}-1440m_1x^4+(100800m_1+120960m_2)x^6$} & \thead{$-752m_1 - 992m_2$} \\ \hline
        $5$ & \thead{$P_{3}-(3669120m_1+$ \\ $+3628800m_2+2419200m_3)x^8$} & \thead{$10760m_1 + 10208m_2 + 8128m_3$} \\ \hline
        $7$ & \thead{$P_{5}-2419200m_3x^8+(132485760m_1$ \\ $+132523776m_2+127733760m_3+191600640m_4)x^{10}$} & \thead{$-150368m_1 - 151360m_2 - 131072m_3 - 261632m_4$} \\ \hline
        $9$ & \thead{$P_{7}-(2987254670400m_1+1443025584000m_2$ \\ $+4395238848000m_3+435891456000m_4$ \\ $+2615348736000m_5)x^{12}$} & \thead{$1653845080m_1 + 798623296m_2 + 2433654016m_3$ \\ $ + 240531968m_4 + 1448424448m_5$} \\ \hline
        $11$ & \thead{$P_{9}-2615348736000m_5x^{12}$ \\ $+(233012623072320m_1+112557132724608m_2$ \\ $+342707806172160m_3+33799023498240m_4$ \\ $+408143851315200m_5+298896998400m_6)x^{14}$} & \thead{$-45693629776m_1 - 22072942112m_2$ \\
        $- 67200433152m_3 - 6623920640m_4$ \\ $- 80038572032m_5 - 67100672m_6$} \\ \hline
        $13$ & \thead{$P_{11}-(33739555437907200m_1$ \\ $+36805134614976000m_2+44800972280064000m_3$ \\ $+41654949912576000m_4+54064489070592000m_5$ \\ $+3556874280960000m_6+21341245685760000m_7)x^{16}$} & \thead{$2842001070888m_1 + 3238745624352m_2 $ \\ $ + 3741180906624m_3 + 3757064812032m_4$ \\
        $+4520035100672m_5 + 323320668160m_6$ \\ $+ 1941802827776m_7$} \\ \hline
        $15$ & \thead{$P_{13}-21341245685760000m_7x^{16}$ \\ $+(8402144205069861120m_1$ \\
        $+22393402585339516416m_2$ \\ $+23812982725176852480m_3$ \\
        $+26773106681119518720m_4$ \\ $+26105038547735347200m_5$ \\
        $+18960416317056614400m_6$ \\ $+13624251245789184000m_7$ \\
        $+20436376868683776000m_8)x^{18}$} & \thead{$-294116919901376m_1 - 808244271451264m_2$ \\ $ - 857352847937536m_3 - 967180422607872m_4 $ \\
        $- 937562552582144m_5 - 697470026547200m_6$ \\ $ - 481703151009792m_7 - 753623571759104m_8$} \\ \hline
        $17$ & \thead{$P_{15}-(2884008786518448388800m_1$ \\ $+6395040144075361680000m_2$ \\ $+5271131206641047616000m_3$ \\ $+6181543187483279616000m_4$ \\ $+5708357806073674752000m_5$ \\ $+5146731790499819520000m_6$ \\ $+2368430104959959040000m_7$ \\ $+5620003638888038400000m_8$ \\ $+1605715325396582400000m_9)x^{20}$} & \thead{$41293608301111928m_1$ \\
        $+ 90677583712409216m_2$ \\
        $+ 73576727868219904m_3$ \\
        $+ 86525504491461632m_4$ \\ 
        $ + 79617778480623616m_5$ \\
        $+ 72730964263600128m_6$ \\
        $+ 32447476880769024m_7$ \\
        $+ 79472334229602304m_8$ \\ 
        $ + 23998307331473408m_9$} \\ \hline
    \end{tabular}
    \caption{\centering Total Pontryagin Classes and Surgery Obstructions of \\ $\xi=m_1\xi_1+\dots+m_{t_{n,6}^{'}}\xi_{t_{n,6}^{'}}\in[Th(\cp^n\times D^{6}),G/O]_{free}$}
    \label{signobstrtable3}
\end{table}